\renewcommand{\th}{\textsuperscript{th}\xspace}
\DeclareMathAlphabet{\mathpzc}{T1}{pzc}{m}{it}
\newtheorem{theorem}{Theorem}
\newtheorem{definition}[theorem]{Definition}
\newtheorem{corollary}[theorem]{Corollary}
\newtheorem{proposition}[theorem]{Proposition}
\newtheorem{remark}[theorem]{Remark}
\newtheorem{lemma}[theorem]{Lemma}
\definecolor{orange}{rgb}{.7,0.4,0.0}
\definecolor{darkr}{rgb}{0.5,0.1,0.1}
\newcommand{\set}[1] {\mathrm{#1}}
\newcommand{\sset}[1] {\mathds{#1}}
\newcommand{\map}[1] {\mathrm{#1}}
\newcommand{\op}[1] {\mathbf{#1}}
\newcommand{\sop}[1] {\mathds{#1}}
\newcommand{\point}[0] {\ \ .}
\renewcommand{\comma}[0] {\ \ ,}
\newcommand{\mt}[1] {\quad\text{#1}\quad}
\newcommand{\transpose}[0] {^{\mathrm{T}}}
\DeclareMathOperator{\supp}{supp}
\newcommand{\knownth}[1] {\textit{#1}}
\newcommand{\dd}[0] {\mathrm{d}}
\newcommand{\E}[0] {\sop{E}}
\newcommand{\p}[0] {\sop{P}}
\newcommand{\hr}[0] {\ensuremath{\mathcal{H}}\xspace}
\newcommand{\Ccb}[0] {\mathcal C_\mathrm{c,b}}
\newcommand{\F}[0] {\mathcal{F}}
\newcommand{\1}[0] {\sop{1}}
\newcommand{\ii}[0] {\mathrm{i}}
\newcommand{\ee}[0] {\mathrm{e}}
\newcommand{\cll}[0] {\!\!\!\!\!\!}
\newcommand{\cl}[0] {\!\!\!\!}
\newcommand{\Cdot}[0] {\raisebox{-0.15ex}{\scalebox{1.25}{$\cdot$}}}
\newcommand{\CDot}[0] {\raisebox{-0.25ex}{\scalebox{1}{$\Cdot$}}}
\renewcommand{\emptyset}{\scalebox{1.25}\o}
\newcommand{\tightoverset}[2]{%
  \mathop{#2}\limits^{\vbox to -.5ex{\kern-0.75ex\hbox{$#1$}\vss}}}
\newcommand{\isomap}[0] {\tightoverset\sim\longrightarrow}
\newcommand*{\longhookrightarrow}{\ensuremath{\lhook\joinrel\longrightarrow}}
\renewcommand*\env@matrix[1][*\c@MaxMatrixCols c]{%
  \hskip -\arraycolsep
  \let\@ifnextchar\new@ifnextchar
  \array{#1}}
\newcolumntype{x}[1]{>{\centering\hspace{0pt}\arraybackslash}p{#1}}
\newcolumntype{P}[1]{>{\raggedleft\hspace{0pt}\arraybackslash}p{#1}}
\newcommand{\modfrac}[2]{
    \raise.5ex\hbox{\ensuremath{\displaystyle #1}}%
    \kern-.05em/\kern-.05em%
    \lower.25ex\hbox{\ensuremath{\displaystyle #2}}
}
\newcommand{\cptp}{\textsc{cptp}\xspace}
\newcommand{\sat}{1-\textsc{in}-3\textsc{sat}\xspace}
\newcommand{\np}{\textsc{NP}\xspace}
\newcommand{\pp}{\textsc{P}\xspace}
\newcommand{\yes}{\textsc{Yes}\xspace}
\definecolor{myGreen}{rgb}{0.717647, 0.756863, 0.117647}
\definecolor{myGreenL}{rgb}{0.91634, 0.927959, 0.738562}
\definecolor{myPurple}{rgb}{0.211765, 0.113725, 0.411765}
\definecolor{myRed}{rgb}{0.941176, 0.337255, 0.239216}
\begin{document}
  \title{The Complexity of Divisibility}
  
  \author[cam]{Johannes Bausch\corref{cor}}
  \ead{jkrb2@cam.ac.uk}
  \author[cam,ucl]{Toby Cubitt}
  \ead{t.cubitt@ucl.ac.uk}
  
  \cortext[cor]{Corresponding author}
  \address[cam]{DAMTP, Centre for Mathematical Sciences, University of Cambridge, Wilberforce Road, Cambridge CB3 0WB, UK}
  \address[ucl]{Department of Computer Science, University College London, Gower Street, London WC1E 6BT, UK}
  
  \begin{keyword}
    stochastic matrices \sep cptp maps \sep probability distributions \sep divisibility \sep decomposability \sep complexity theory
    
    \MSC[2010] 60-08 \sep 81-08 \sep 68Q30
  \end{keyword} % insert keywords separated by a semicolon
  
  \begin{abstract}
    We address two sets of long-standing open questions in linear algebra and probability theory, from a computational complexity perspective: stochastic matrix divisibility, and divisibility and decomposability of probability distributions. We prove that finite divisibility of stochastic matrices is an \np-complete problem, and extend this result to nonnegative matrices, and completely-positive trace-preserving maps, i.e. the quantum analogue of stochastic matrices. We further prove a complexity hierarchy for the divisibility and decomposability of probability distributions, showing that finite distribution divisibility is in \pp, but decomposability is \np-hard. For the former, we give an explicit polynomial-time algorithm. All results on distributions extend to weak-membership formulations, proving that the complexity of these problems is robust to perturbations.
  \end{abstract}
  
  \maketitle

\section{Introduction and Overview}
People have pondered divisibility questions throughout most of western science and philosophy. Perhaps the earliest written mention of divisibility is in \knownth{Aristotle's Physics} in 350BC, in the form of the \knownth{Arrow paradox}---one of \knownth{Zeno of Elea's} paradoxes (ca. 490--430 BC). Aristotle's lengthy discussion of divisibility (he devotes an entire chapter to the topic) was motivated by the same basic question as more modern divisibility problems in mathematics: can the behaviour of an object---physical or mathematical---be subdivided into smaller parts?

For example, given a description of the evolution of a system over some time interval $t$, what can we say about its evolution over the time interval $t/2$? If the system is stochastic, this question finds a precise formulation in the \emph{divisibility problem} for stochastic matrices \cite{Kingman1962}: given a stochastic matrix $\op P$, can we find a stochastic matrix $\op Q$ such that $\op P=\op Q^2$?

This question has many applications. For example, in information theory the stochastic matrices model noisy communication channels, and divisibility becomes important in \knownth{relay coding}, when signals must be transmitted between two parties where direct end-to-end communication is not available \cite{Loring1878}. Another direct use is in the analysis of chronic disease progression \cite{Charitos2008}, where the transition matrix is based on sparse observations of patients, but finer-grained time-resolution is needed. In finance, changes in companies' credit ratings can be modelled using discrete time Markov chains, where rating agencies provide the transition matrix based on annual estimates---for valuation or risk analysis, a transition matrix for a much shorter time periods needs to be inferred \cite{Jarrow1997}.

We can also ask about the evolution of the system for \emph{all} times up to time $t$, i.e.\ whether the system can be described by some continuous evolution. For stochastic matrices, this has a precise formulation in the \emph {embedding problem}: given a stochastic matrix $\op P$, can we find a generator $\op Q$ of a continuous-time Markov process such that $\op P=\exp(\op Qt)$? The embedding problem seems to date back further still, and was already discussed by Elfving in 1937 \cite{Elfving1937}. Again, this problem occurs frequently in the field of systems analysis, and in analysis of experimental time-series snapshots \cite{Cubitt2012, Ljung1987, Nielsen1998}.

Many generalisations of these divisibility problems have been studied in the mathematics and physics literature. For example, the question of square-roots of (entry-wise) nonnegative matrices is an old open problem in matrix analysis \cite{Minc1988}: given an entry-wise nonnegative matrix $\op M$, does it have an entry-wise nonnegative square-root? In quantum mechanics, the analogue of a stochastic matrix is a completely-positive trace preserving (CPTP) map, and the corresponding divisibility problem asks: when can a CPTP map $\op T$ be decomposed as $\op T = \op R\circ\op R$, where $\op R$ is itself CPTP? The continuous version of this, whether a CPTP can be embedded into a completely-positive semi-group, is sometimes called the \emph{Markovianity problem} in physics \cite{Cubitt2012a}---the latter again has applications to subdivision coding of quantum channels in quantum information theory \cite{Muller-Hermes2015}.

Instead of dynamics, we can also ask whether the description of the static state of a system can be subdivided into smaller, simpler parts. Once again, probability theory provides a rich source of such problems. The most basic of these is the classic topic of divisibile distributions: given a random variable $X$, can it be decomposed into $X = Y + Z$ where $Y,Z$ are some other random variables? What if $Y$ and $Z$ are identically distributed? If we instead ask for a decomposition into infinitely random variables, this becomes the question of whether a distribution is infinitely divisible.

In this work, we address two of the most long-standing open problems on divisibility: divisibility of stochastic matrices, and divisibility and decomposability of probability distributions. We also extend our results to divisibility of nonnegative matrices and completely positive maps. Surprisingly little is known about the divisibility of stochastic matrices. Dating back to 1962 \cite{Kingman1962}, the most complete characterization remains for the case of a $2\times2$ stochastic matrix \cite{He2003}. The infinite divisibility problem has recently been solved \cite{Cubitt2012a}, but the finite case remains an open problem. Divisibility of random variables, on the other hand, is a widely-studied topic. Yet, despite first results dating back as far as 1934 \cite{Cochran1934}, no general method of answering whether a random variable can be written as the sum of two or more random variables---whether distributed identically, or differently---was known.

We focus on the computational complexity of these divisibility problems. In each case, we show which of the divisibility problems have efficient solutions---for these, we give an explicit efficient algorithm. For all other cases, we prove reductions to the famous  $\pp=\np$-conjecture, showing that those problems are \np-hard. This essentially implies that---unless  $\pp=\np$---the geometry of the corresponding divisible and non-divisible is highly complex, and these sets have no simple characterisation beyond explicit enumeration. In particular, this shows that any future concrete classification of these \np-hard problems will be at least as hard as answering $\pp=\np$.

The following theorems summarize our main results on maps. Precise formulations and proofs can be found in \cref{sec:cptpstochastic}.
\begin{theorem}\label{th:stochhard}
	Given a stochastic matrix $\op P$, deciding whether there exists a stochastic matrix $\op Q$ such that $\op P=\op Q^2$ is \np-complete.
\end{theorem}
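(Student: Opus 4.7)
The plan is to prove NP-completeness by separately verifying membership in \np and \np-hardness.

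For \np membership, the natural certificate is the matrix $\op Q$ itself, but since its entries may be irrational algebraic numbers, one must bound the bit complexity of a canonical representative. The system $\op Q^2 = \op P$ together with $\op Q_{ij} \geq 0$ and $\sum_j \op Q_{ij} = 1$ cuts out a compact semialgebraic set with polynomial-size rational defining data. I would argue that whenever this set is non-empty it contains a point whose coordinates admit a polynomial-size algebraic representation (e.g.\ by exhibiting polynomial-size isolating intervals together with defining polynomials of bounded degree), and that such a representation can be verified in polynomial time by symbolic arithmetic.

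For \np-hardness, I would reduce from \sat. Given an instance $\varphi$ with $n$ variables and $m$ clauses, I would construct in polynomial time a stochastic matrix $\op P_\varphi$ of dimension polynomial in $n+m$, such that $\op P_\varphi$ admits a stochastic square root if and only if $\varphi$ is satisfiable. The key lever is that zeros in $\op P_\varphi$ propagate to forced zeros in any stochastic square root $\op Q$: since entries of $\op Q$ are nonnegative, a sum $\sum_k \op Q_{ik}\op Q_{kj}$ vanishes only if every summand vanishes. By sculpting the zero pattern of $\op P_\varphi$ into block form---one block of variable gadgets, one block of clause gadgets, plus stochastic padding---any candidate $\op Q$ is forced into a rigid skeleton whose surviving continuous degrees of freedom correspond to variable assignments and clause evaluations.

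The main obstacle will be engineering the variable gadget so that the quadratic equation $\op Q^2 = \op P_\varphi$ combined with nonnegativity actually \emph{forces} a discrete truth-value choice per variable, rather than admitting a convex family of fractional solutions. This is the classical tension in encoding discrete problems as continuous algebraic feasibility problems: nonnegativity alone is only a convex relaxation of binary choice, and closing the gap requires exploiting the multiplicative structure of squaring---for instance, by designing sub-blocks whose prescribed square admits only permutation-like nonnegative preimages. The clause gadget is then comparatively straightforward: it imposes a row-sum linear equation that is satisfiable by nonnegative entries exactly when precisely one of the three participating literals is ``true'', exploiting the \sat predicate's linear structure. Showing soundness (satisfying assignments lift to stochastic square roots) is a direct construction; the bulk of the technical effort will lie in completeness (every stochastic square root of $\op P_\varphi$ projects to a satisfying assignment), requiring a detailed structural case analysis of admissible block patterns.
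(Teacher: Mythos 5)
The hardness half is where the genuine gap lies. Your reduction is only a plan: the step you yourself flag as ``the main obstacle''---engineering a variable gadget so that $\op Q^2=\op P_\varphi$ together with nonnegativity \emph{forces} a discrete truth-value choice---is precisely the crux of the whole theorem, and you neither exhibit such a gadget nor give any argument that one exists. The zero-propagation observation (if $(\op Q^2)_{ij}=0$ then every product $\op Q_{ik}\op Q_{kj}$ vanishes) is correct but only constrains the support of $\op Q$; it provides no mechanism for ruling out continuous families of fractional roots, and your clause gadget (``a row-sum linear equation feasible iff exactly one literal is true'') presupposes that the variables are already binary, i.e.\ exactly what the missing gadget was supposed to enforce. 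The paper closes this gap by a different mechanism altogether: it never tries to make the zero pattern rigid, but instead exploits the spectral classification of square roots. The \sat instance is encoded into a matrix that is made non-degenerate and non-derogatory (\prettyref{lem:sing}, \prettyref{lem:nondeg}), so its roots form a \emph{finite} family indexed by the $\pm$ sign choices of the eigenvalue square-root branches; each sign choice plays the role of a truth assignment via eigenprojector terms of the form $p_k v_kv_k\transpose$ tensored with an antisymmetric $2\times2$ block (\prettyref{lem:techC}), and nonnegativity of the on-diagonal $2\times2$ blocks is literally the \sat inequality $-\tfrac32\le m_{i1}+m_{i2}+m_{i3}\le-\tfrac12$. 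Unwanted entries are neutralized by the masks $\op E$ and $\op D$ (\prettyref{lem:techE}, \prettyref{lem:techD}), and a separate tensor construction (\prettyref{lem:nonnegequiv1}, \prettyref{lem:nonnegequiv2}) upgrades nonnegative roots to stochastic roots while provably creating no spurious stochastic roots on the other sign branches. None of this machinery has an analogue in your sketch, so the reduction as proposed is not a proof.

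On \np membership, your worry about irrational witnesses is legitimate (the paper's \prettyref{lem:npcontainment} is itself terse on this point), but your proposed fix is also only a gesture: generic sample-point bounds for semialgebraic sets yield algebraic numbers whose degree is exponential in the number of variables---here $d^2$ entries of $\op Q$---so ``polynomial-size defining polynomials plus isolating intervals'' is not automatic and would need an argument exploiting the specific structure of the system $\op Q^2=\op P$ (e.g.\ via the finite classification of roots for non-derogatory $\op P$, which is how the paper's construction keeps witness bit complexity polynomial).
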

\begin{theorem}\label{th:cptphard}
	Given a \cptp map $\op B$, deciding whether there exists a \cptp map $\op A$ such that $\op B=\op A\circ\op A$ is \np-complete.
\end{theorem}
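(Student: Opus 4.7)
The plan is to establish both NP membership and NP-hardness, in parallel with the proof of Theorem~\ref{th:stochhard}.

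For NP membership, I take as witness the CPTP square root $\op A$ itself, described by its Choi matrix $J(\op A)\in\mathbb{C}^{d^2\times d^2}$. Complete positivity amounts to $J(\op A)\succeq 0$, trace preservation to a linear partial-trace constraint, and the composition $\op A\circ\op A=\op B$ to a quadratic polynomial identity in the entries of $J(\op A)$; all three can be verified in polynomial time with rational arithmetic. The one delicate point is the bit-complexity of the witness: one has to show that whenever a CPTP square root exists, one exists whose Choi matrix has rational entries of polynomially-bounded bit length. This should follow from a semialgebraic geometry / quantifier-elimination argument applied to the compact set of candidate Choi matrices, mirroring the treatment in the stochastic case.

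For NP-hardness, the natural strategy is to reduce from stochastic divisibility (Theorem~\ref{th:stochhard}). Given a stochastic matrix $\op P\in\mathbb{R}^{d\times d}$, I would embed it into the classical-to-classical CPTP channel
\[
\Phi_{\op P}(\rho)\;=\;\sum_{i,j}\op P_{ij}\,\langle j|\rho|j\rangle\,\ket{i}\!\bra{i}
\]
on a $d$-dimensional quantum system. The forward direction is immediate: a stochastic $\op Q$ with $\op Q^2=\op P$ yields the CPTP square root $\Phi_{\op Q}$ of $\Phi_{\op P}$, since $\Phi_{\op Q}\circ\Phi_{\op Q}=\Phi_{\op Q^2}=\Phi_{\op P}$.

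The hardest step is the reverse implication: a generic CPTP square root $\op A$ of $\Phi_{\op P}$ need not itself be a classical channel, so a stochastic square root of $\op P$ cannot be extracted simply by dephasing $\op A$ on either side (the two dephasing operations do not commute past the inner factor of $\op A$). To force classicality I would strengthen the embedding by tensoring with an auxiliary register equipped with a dephasing structure designed so that positivity of $J(\op A)$ rigidly constrains any CPTP square root to decompose as $\op A=\op A_{\mathrm{cl}}\otimes\op A_{\mathrm{aux}}$ with $\op A_{\mathrm{cl}}$ classical; the block $\op A_{\mathrm{cl}}$ then directly furnishes the desired stochastic square root of $\op P$. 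The main difficulty is calibrating the ancilla so that genuinely non-classical candidate square roots are excluded by complete positivity, while the reduction remains polynomial-sized in $\op P$. A fallback is to lift the SAT-based construction used in the proof of Theorem~\ref{th:stochhard} directly to the Choi-matrix level, with positivity of $J(\op A)$ playing the role that entrywise nonnegativity played for stochastic matrices.
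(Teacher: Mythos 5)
Your \np-membership argument is fine and matches the paper's (\prettyref{lem:npcontainment}): the witness is the candidate root itself, verified via its Choi matrix and squared in polynomial time; your extra care about witness bit-size is reasonable (the paper treats this lightly) but not where the difficulty lies.

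The hardness half has a genuine gap. Your embedding $\Phi_{\op P}$ is exactly the paper's map $\map{emb}$ from \prettyref{sec:cptp}, and the forward direction is as easy as you say. But the reverse implication---that a \cptp root of $\Phi_{\op P}$ yields a stochastic root of $\op P$---is precisely the hard part, and your proposed fix (an auxiliary dephasing register whose positivity constraints force every \cptp root into a classical-times-ancilla product form) is not carried out and comes with no indication of how to calibrate it; as written it is a hope, not a proof. The obstruction is real: from an arbitrary \cptp root $\op A$ the natural extraction $\op Q_{ij}:=\bra{i}\op A(e_je_j\transpose)\ket{i}$ is stochastic but need not satisfy $\op Q^2=\op P$, since composing $\op A$ with itself picks up its coherent (off-diagonal) action. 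The paper never proves the general reduction \textsc{Stochastic Divisibility}$\rightarrow$\textsc{\cptp Divisibility} that your main route requires. Instead it reduces at the level of the \textsc{Root} problems: the \sat construction (\prettyref{th:satnonneg}, with \prettyref{lem:sing} and \prettyref{lem:nondeg}) produces non-degenerate, non-derogatory matrices whose \emph{complete} family of roots is under control; this family is pushed through $\map{emb}$ (the paper argues, using $\sigma(\map{emb}\,\op A)\subseteq\sigma(\op A)\cup\{0\}$, that the image family comprises all roots of the embedded matrix), and \prettyref{lem:cptp} and \prettyref{cor:cptpequiv1} show a member is \cptp iff the corresponding root is stochastic, so any \cptp root is forced to be the embedding of a stochastic one; finally \textsc{Root} and \textsc{Divisibility} coincide for non-derogatory instances (\prettyref{th:reduction}). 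Your one-sentence ``fallback'' of lifting the \sat construction to the Choi level is essentially this strategy, but stated as an aside it does not substitute for the argument. To repair the proof you would either have to actually construct and analyze the rigidifying ancilla, or adopt the paper's root-family route.
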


In fact, the last two theorems are strengthenings of the following result.
\begin{theorem}\label{th:nonneghard}
	Given a nonnegative matrix $\op M$, deciding whether there exists a nonnegative matrix $\op N$ such that $\op M=\op N^2$ is \np-complete.
\end{theorem}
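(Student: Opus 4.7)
The plan is to prove NP-completeness in two parts: NP-membership via a certificate argument, and NP-hardness via a polynomial-time reduction from \sat. For membership, the natural certificate for a \yes instance is the matrix $\op N$ itself, and verification reduces to matrix multiplication and an entrywise nonnegativity check. The subtlety is that $\op N$ may have irrational entries; I would address this by appealing to quantitative results from real algebraic geometry. The set of nonnegative square roots of $\op M$ is a semialgebraic set defined by quadratic polynomials with input-polynomial bit-size, so if nonempty it contains a point whose coordinates are algebraic numbers of polynomially bounded degree and bit-length. A representation by minimal polynomials together with isolating intervals then serves as a polynomial-size witness verifiable in polynomial time.

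For NP-hardness, the structural hook is the following elementary observation: because $m_{ij}=\sum_k n_{ik}n_{kj}$ is a sum of nonnegative products, a zero entry $m_{ij}=0$ forces $n_{ik}n_{kj}=0$ for every intermediate index $k$. The zero pattern of $\op M$ thus imposes rich combinatorial ``at least one vanishes'' constraints on the support of any nonnegative $\op N$ with $\op N^2=\op M$, which I would use to encode Boolean logic. Given a \sat instance $\phi$ with variables $x_1,\ldots,x_n$ and clauses $C_1,\ldots,C_m$, I would build $\op M_\phi$ in block form of size polynomial in $n+m$: a variable block per $x_i$ whose internal zero pattern forces the corresponding support region of $\op N$ to encode a Boolean value (``true'' or ``false'' but not both), and a clause block per $C_k$ cross-linking the three literal variables so that zero entries enforce exactly one active literal. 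Any satisfying assignment then yields a consistent system of equations for the nonzero entries of $\op N$, solvable by assembling $\op N$ from rank-one nonnegative patches whose squares reproduce the nonzero entries of $\op M_\phi$.

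The main obstacle I anticipate is the converse direction of the gadget analysis: showing that \emph{every} nonnegative square root of $\op M_\phi$ arises from a satisfying assignment, ruling out spurious supports or non-Boolean ``fractional'' solutions that mix the intended variable values. This will likely require padding $\op M_\phi$ with ancilla rows and columns that rigidify the block decomposition of any candidate $\op N$, together with a cascading argument showing that the disjunctive zero-pattern constraints propagate to force a clean one-hot encoding per variable block and an exactly-one encoding per clause block. Once this rigidity is secured, the bijection with satisfying assignments of $\phi$ is immediate and, combined with NP-membership, completes the proof.
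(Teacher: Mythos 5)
Your hardness argument is a plan rather than a proof, and the part you defer---``showing that \emph{every} nonnegative square root of $\op M_\phi$ arises from a satisfying assignment''---is precisely the mathematical content of the theorem, not a technicality. Zero entries of $\op M$ do force disjunctive support constraints $n_{ik}n_{kj}=0$ on a nonnegative root $\op N$, but you give no concrete variable or clause gadget, no argument that the continuous freedom in the magnitudes of the nonzero entries of $\op N$ cannot be used to ``cheat,'' and no control over roots whose support or spectral structure differs entirely from the intended block form. The paper's proof avoids this problem by working at the spectral level rather than the support level: it restricts (via \prettyref{lem:sing} and \prettyref{lem:nondeg}) to non-degenerate, non-derogatory matrices, for which the classification of matrix roots (\prettyref{th:roots}) says the square roots form a \emph{finite, explicitly enumerable} family indexed by sign choices $\pm\sqrt{\lambda_i}$ of the eigenvalues. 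A \sat instance is then encoded by building rank-one eigenprojectors $v_kv_k\transpose$ tensored with small $2\times2$/$4\times4$ blocks (\prettyref{lem:techC}), so that the boolean value of each variable is literally the sign choice of an eigenvalue and nonnegativity of the on-diagonal blocks is equivalent to the 1-in-3 inequalities of \prettyref{eq:3sat2}; masking matrices $\op E$ and $\op D$ (\prettyref{lem:techE}, \prettyref{lem:techD}) neutralize all unwanted entries while provably not admitting alternative sign choices, and a separate proposition bounds the bit complexity. In that construction the ``soundness'' direction you flag as your main obstacle is automatic, because the set of candidate roots is exhaustively parameterized from the outset.

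Your membership argument also has a quantitative gap: for a $d\times d$ matrix the semialgebraic set of nonnegative roots lives in $n=d^2$ variables, and the standard sample-point bounds from real algebraic geometry give algebraic coordinates of degree and bit-size of order $2^{\mathcal O(d^2)}$, not polynomial; so ``polynomially bounded degree and bit-length'' does not follow from the general theory. (The paper's \prettyref{lem:npcontainment} is itself terse on this point, but its hardness construction explicitly produces rational roots of polynomial bit complexity, which is where the paper's completeness claim gets its teeth.) As it stands, neither half of your proposal closes; the key missing idea is a mechanism---such as the paper's eigenvalue-sign encoding---that makes the set of all nonnegative roots rigid and enumerable enough to argue about.
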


% Compare with the infinite divisibility results of \cite{Cubitt2012a}:
% \begin{theorem}
%         Given a stochastic matrix $\op P$, deciding whether there exists a matrix $\op Q$ such that $P = e^Q$ and $e^{Qt}$ is stochastic for all $t$ is \np-hard.
% \end{theorem}
% \begin{theorem}
%         Given a \cptp matrix $\op B$, deciding whether there exists a Lindblad operator $\op L$ such that $B = e^L$ and $e^{Lt}$ is \cptp for all $t$ is \np-hard.
% \end{theorem}

The following theorems summarize our main results on distributions. Precise formulations and proofs can be found in \cref{sec:distributions}.
\begin{theorem}\label{th:div}
	Let $X$ be a finite discrete random variable. Deciding whether $X$ is $n$-divisible---i.e. whether there exists a random variable $Y$ such that $X = \sum_{i=1}^n Y$---is in \pp.
\end{theorem}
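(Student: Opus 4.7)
The plan is to give an explicit polynomial-time algorithm, reducing the question to the existence of a polynomial $n$-th root of a generating function. After a rational affine rescaling, which clearly preserves $n$-divisibility, I assume $X$ takes integer values with $\min\supp X=0$ and encode its law by the probability generating function $P(z)=\sum_j p_j z^{x_j}$. Then $X$ is $n$-divisible iff there exists a polynomial $Q(z)=\sum_l q_l z^{y_l}$ with non-negative coefficients and $Q(1)=1$ satisfying $Q(z)^n=P(z)$. The key structural input is that if such a $Q$ exists, then $n\cdot\supp Y=\supp X$ as $n$-fold Minkowski sumsets; in particular $|\supp Y|\le k:=|\supp X|$ and, with the normalisation above, $\supp Y\subseteq\supp X$.

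I then process $\supp X=\{0=x_1<x_2<\cdots<x_k\}$ in increasing order, growing a candidate $\supp Y$ and probabilities $q_l$. Initialise $y_1=0$ with $q_1=p_1^{1/n}$. At step $i$, let $P_i$ denote the contribution to $[z^{x_i}]\,Q(z)^n$ coming from $n$-fold products of the $y_l$'s already placed in $\supp Y$. Because every $y\in\supp Y$ is non-negative, the \emph{only} way to write $x_i$ as a sum of $n$ elements of the current $\supp Y$ together with $x_i$ that actually uses $x_i$ is $x_i+0+\cdots+0$, contributing $n q_1^{n-1}q_i$. Hence $p_i=P_i+n q_1^{n-1}q_i$ with $q_i\ge 0$ forces three mutually exclusive cases: $P_i>p_i$ (reject as not $n$-divisible), $P_i=p_i$ (conclude $x_i\notin\supp Y$), or $P_i<p_i$ (set $q_i=(p_i-P_i)/(n q_1^{n-1})$ and include $x_i$). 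After one pass all $k$ equations $[z^{x_i}]\,Q^n=p_i$ hold by construction; it remains to check that $Q^n$ assigns no mass outside $\supp X$, which---since all stored $q_l$ are strictly positive---reduces to $n\cdot\supp Y\subseteq\supp X$. I verify this by computing the iterated Minkowski sumsets $j\cdot\supp Y$ for $j=1,\ldots,n$ and aborting the moment the running size exceeds $k$, since then some element of $n\cdot\supp Y$ must lie outside $\supp X$.

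The hard part will be controlling bit-complexity and runtime throughout. Irrationality via $q_1=p_1^{1/n}$ is sidestepped by working with the rescaled polynomial $\tilde Q(z)=Q(z)/q_1$, whose rational coefficients $\tilde q_l=q_l/q_1$ satisfy $\tilde Q(z)^n=P(z)/p_1$; non-negativity of $Q$'s coefficients is equivalent to that of $\tilde Q$'s. Each $P_i$ is then computed incrementally by maintaining a family of dictionaries $D_j[v]=[z^v]\tilde R(z)^j$ for $j=0,\ldots,n$, where $\tilde R$ is the current partial generating function; introducing a new $y_{\mathrm{new}}$ with weight $\tilde q_{\mathrm{new}}$ triggers the binomial update $\tilde R_{\mathrm{new}}^j=\sum_{l=0}^j\binom{j}{l}\tilde q_{\mathrm{new}}^l\,z^{l y_{\mathrm{new}}}\,\tilde R_{\mathrm{old}}^{j-l}$ applied to each dictionary. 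The early-rejection rule $|j\cdot\supp Y|\le k$ caps every dictionary at size $k$, so the whole procedure runs in time polynomial in $k$, $n$, and the input bit-length, establishing \pp membership.
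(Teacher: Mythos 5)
Your proof is correct and follows essentially the same route as the paper: you encode the distribution by its generating function (the characteristic polynomial of \prettyref{lem:divtopoly}) and decide divisibility by constructively computing the unique candidate polynomial $n$-th root coefficient by coefficient and then verifying it, which is precisely the Taylor-expansion-with-vanishing-remainder algorithm used in the paper's proof of \prettyref{th:rootp}. The difference is only presentational: you make the uniqueness of the root, the nonnegativity checks, the rescaling that removes $p_1^{1/n}$, and the (sparse-)support bookkeeping explicit, where the paper appeals to standard series-expansion or LLL-factorisation subroutines.
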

\begin{theorem}\label{th:divweak}
	Let $X$ be a finite discrete random variable, and $\epsilon>0$. Deciding whether there exists a random variable $Y$ $\epsilon$-close to $X$ such that $Y$ is $n$-divisible, or that there exists such a $Y$ that is nondivisible, is in \pp.
\end{theorem}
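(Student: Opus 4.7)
The plan is to deduce \cref{th:divweak} immediately from \cref{th:div}. The key observation is that $X$ itself lies within $\epsilon$ of $X$ for any $\epsilon > 0$, so $Y := X$ is always a candidate witness for one of the two alternatives in the theorem statement---whichever of ``$n$-divisible'' or ``nondivisible'' correctly describes $X$. In particular, the two alternatives are never simultaneously false, and the decision problem is well-posed as a promise problem in which both alternatives may hold at once.

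Given this, my algorithm simply invokes the polynomial-time routine supplied by \cref{th:div} on $X$ directly. If that routine accepts, I output that there exists a divisible $\epsilon$-close $Y$; the witness is $Y = X$. If it rejects, I output that there exists a nondivisible $\epsilon$-close $Y$; again $Y = X$ is the witness. When the $\epsilon$-ball around $X$ straddles the set of $n$-divisible distributions so that both alternatives hold, the algorithm returns whichever side $X$ itself happens to lie on, which is an acceptable answer under the promise.

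There is no distinct technical obstacle in this argument, since the entire computational difficulty is subsumed by \cref{th:div}. One might initially be tempted to search the $\epsilon$-neighbourhood of $X$ via gridding, linear-programming relaxations, or a local continuity argument around the boundary of the $n$-divisible set, but all such machinery is avoided entirely by letting $X$ serve as its own witness. The conceptual upshot is that the polynomial-time solvability of finite discrete divisibility is robust to perturbations of the input distribution: the \pp upper bound does not rely on an idealised exact-arithmetic model of the input, and the weak-membership version inherits the same complexity as the exact version with no overhead.
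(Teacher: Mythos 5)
Your proposal is correct: since at least one of the two alternatives always holds (with $Y=X$ as witness, because $\|\map p_X-\map p_X\|_\infty=0<\epsilon$), any answer backed by a witness is acceptable, and deciding exact $n$-divisibility of $X$ via \prettyref{th:div} in polynomial time therefore settles \textsc{Weak Distribution Divisibility}$_{n,\epsilon}$. However, your route differs from the paper's: the paper reduces the weak problem to the $\epsilon$-tolerant problem \textsc{Distribution Divisibility}$_{n,\epsilon}$ (\prettyref{def:fdddive}, solved in \prettyref{lem:fddwd} by iteratively propagating coefficient intervals), answering \textsc{Yes} exactly when \emph{some} $n$-divisible distribution lies within $\epsilon$ of $X$, and only in the \textsc{No} branch does it fall back on $X$ itself as the nondivisible witness. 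Your reduction instead calls the exact algorithm and uses $X$ as its own witness in both branches. What each buys: your argument is shorter and bypasses \prettyref{lem:fddwd} entirely, at the price of an algorithm that ignores $\epsilon$ and may answer \textsc{No} even when divisible distributions lie arbitrarily close to $X$---still valid for the weak-membership formulation of \prettyref{def:wfdddiv}, but one-sided. The paper's version gives the stronger guarantee of detecting the \textsc{Yes} side whenever it holds, which is closer to the spirit of weak membership, is more robust to numerical/arithmetic concerns (the interval algorithm never needs exact $n$-th roots), and additionally yields the by-product that the closest $n$-divisible distribution can be computed by binary search over $\epsilon$. Since \prettyref{th:div} is available as a black box, your proof stands as a correct, more elementary alternative.
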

\begin{theorem}\label{th:dec}
	Let $X$ be a finite discrete random variable. Deciding whether $X$ is decomposable---i.e. whether there exist random variables $Y,Z$ such that $X = Y + Z$---is \np-complete.
\end{theorem}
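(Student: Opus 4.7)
The plan is to handle the two directions of \np-completeness separately. For membership, I would reformulate decomposability as a polynomial-factorisation question via the probability generating function (PGF) $p_X(t) = \sum_k \Pr[X=k]\, t^k$: by independence, $X = Y + Z$ is equivalent to $p_X(t) = p_Y(t)\, p_Z(t)$ with $p_Y$ and $p_Z$ themselves PGFs, i.e.\ polynomials with nonnegative coefficients summing to $1$. Since $\mathbb{C}[t]$ is a unique factorisation domain, every such factorisation picks out a conjugation-closed subset $S$ of the multiset of complex roots of $p_X$, with $p_Y(t) \propto \prod_{r \in S}(t - r)$. The \np-certificate can thus be taken to be the indicator vector of $S$, a bit string of length $O(n)$ for $n = \deg p_X$; verification amounts to computing the coefficients of $\prod_{r \in S}(t - r)$ in exact algebraic arithmetic and checking their signs, whose bit-complexity is polynomial by standard root-separation bounds for algebraic numbers over the rationals.

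For \np-hardness I would reduce from a suitable \np-hard problem---\sat, already used in the paper for the matrix results, or \textsc{subset sum} are the most natural candidates. The aim is to build, from an instance $I$, a probability distribution $X_I$ whose PGF admits a nonnegative factorisation exactly when $I$ has a solution. A promising construction places the roots of $p_{X_I}$ on the unit circle in carefully chosen gadget positions: each variable (or input number) of $I$ contributes a conjugate pair of roots whose angular position encodes its value, and additional constraint roots are inserted so that the partition of roots induced by a satisfying assignment yields two factors with only nonnegative coefficients, while any other partition produces a strictly negative coefficient somewhere. Completeness---exhibiting the intended factorisation for each solution of $I$---is typically the easier half, since one can directly compute the two halves of the partition from the assignment.

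The principal obstacle is \emph{soundness}: with $n$ roots there are up to $2^n$ conjugation-closed partitions, and one must rule out \emph{every} unintended one. This calls for a quantitative argument showing that in each such grouping at least one coefficient is negative by a definite margin, large enough that replacing the ideal roots by polynomial-size rational approximations cannot create spurious solutions. A secondary concern is to keep the bit-length of the construction polynomial in $|I|$ while preserving this margin, which will likely force a careful choice of the angular positions (for example, roots of unity of pairwise coprime orders, whose cyclotomic factors are incommensurable in a controlled way). Once both steps are established, checking that the resulting polynomial is a bona fide PGF---nonnegative coefficients summing to $1$---completes the reduction and combines with the \np-membership argument above to give \np-completeness.
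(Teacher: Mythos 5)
Your overall skeleton is the same as the paper's: pass to the generating function (the paper's ``characteristic polynomial'' of \prettyref{lem:divtopoly}), note that $\sset R[x]$ (or $\sset C[x]$) is a UFD so every decomposition corresponds to grouping the irreducible factors, and encode an \np-hard partition-type problem into conjugate root pairs sitting near the unit circle, with the number carried by the angular offset---this is exactly the paper's gadget $x^2+b_ix+1$ with $b_i$ a rescaled \textsc{Subset Sum}/\textsc{Partition} element. The difficulty you flag in your last paragraph, however, is not a residual technicality: it \emph{is} the proof. You explicitly leave open the soundness step, namely that for \emph{every} conjugation-closed grouping of the roots not corresponding to a solution, some coefficient of the corresponding factor is negative (with a margin surviving rational approximation). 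Without that, the reduction proves nothing, since a \no instance could still admit an unintended nonnegative factorization. In the paper this is the content of \prettyref{lem:even}: after the affine rescaling of the instance to values $b_i$ with $|b_i|<\delta$, one shows for an arbitrary subset $\set T$ that the product $f_{\set T}=\prod_{b\in\set T}(x^2+bx+1)$ has $c_0=1$, all even coefficients strictly positive, and every odd coefficient with the sign of $\Sigma=\sum_{t\in\set T}t$ up to $\mathcal O(\delta)$ corrections; the quantitative control comes from the coefficient-wise comparison $|c_{j,k}|\le d_{j,k}$ with the explicit expansion of $(x^2+\delta x+1)^n$, and choosing $\delta=\mathcal O(1/n^2)$ to suppress the combinatorial factors. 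Combined with the choice of threshold $l=\Sigma_{\set S}$ (so that the target is \textsc{Partition}, still \np-hard by \prettyref{lem:optpartsum}), this characterizes \emph{all} factorizations at once: a nonnegative split exists iff both parts of some partition have positive (rescaled) sum. Your alternative suggestion of roots of unity of pairwise coprime orders is not worked out and does not obviously encode the real-valued instance data, so as it stands the hardness half is a plan rather than an argument.

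Two smaller points. Your completeness/soundness split is a bit misleading: in the paper both directions fall out of the same coefficient-sign characterization, so there is no separately ``easy half.'' And for \np-membership your certificate (a subset of the complex roots, verified in exact algebraic arithmetic) is workable but heavier than needed; the paper simply takes the two probability mass functions as the witness and verifies by convolution (\prettyref{lem:npcontainment2}). If you do keep the root-subset certificate, you owe an argument that deciding the signs of the resulting coefficients---symmetric functions of algebraic numbers---can be done in time polynomial in the input bit-size, which is true but needs the root-bound bookkeeping spelled out.
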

\begin{theorem}\label{th:decweak}
	Let $X$ be a finite discrete random variable, and $\epsilon>0$. Deciding whether there exists a random variable $Y$ $\epsilon$-close to $X$ such that $Y$ is decomposable, or that there exists such a $Y$ that is indecomposable, is \np-complete.
\end{theorem}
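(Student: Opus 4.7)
The plan is to derive \cref{th:decweak} from \cref{th:dec} by combining two ingredients: a short certificate for the ``decomposable nearby'' side of the promise, which yields \np-membership, and a gap-preserving strengthening of the reduction used to prove \cref{th:dec}, which yields hardness. The target is the weak-membership promise problem whose \yes side is ``there exists a decomposable random variable $Y$ with $d(X,Y)\le\epsilon$'' and whose \no side is ``there exists an indecomposable $Y$ with $d(X,Y)\le\epsilon$'', where $d$ denotes the distance used in the statement.

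For \np-membership, I would take as witness on the \yes side a rational triple $(Y,A,B)$ exhibiting the convolution identity $Y=A*B$ together with the inequality $d(X,Y)\le\epsilon$. The verifier checks in polynomial time that $A,B$ are probability distributions, that their convolution reproduces $Y$, and that $Y$ lies in the prescribed $\epsilon$-ball around $X$. The feasibility region of such triples is a rational polytope cut out by polynomially many linear (in)equalities whose coefficients can be read off directly from the encoding of $X$ and $\epsilon$; standard bit-size bounds for vertices of rational polytopes then guarantee that, whenever this polytope is non-empty, it contains a vertex of polynomial bit-length, and such a vertex is a valid short witness.

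For \np-hardness I would lift the reduction underlying \cref{th:dec}. On its \yes instances, the distribution $X$ produced by the reduction is itself decomposable, hence trivially $\epsilon$-close to a decomposable random variable (namely itself). The content of the lift is to show that on \no instances $X$ is in fact $\delta$-far, in the distance $d$, from every decomposable distribution, for some $\delta = 1/\mathrm{poly}(|X|)$. Given such a polynomial robustness gap, one instantiates the weak-membership problem with any $\epsilon < \delta$: on \yes instances a decomposable neighbour exists and on \no instances every $\epsilon$-neighbour is forced to be indecomposable, so the two sides of the promise are properly separated and \np-hardness transfers through the reduction.

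The main obstacle is establishing the quantitative lower bound $\delta \ge 1/\mathrm{poly}$; this is not implied automatically by the bare reduction proving \cref{th:dec}. I expect to discharge it by one of two routes. The semialgebraic route observes that the set of decomposable distributions of a given support is cut out by polynomials of polynomially bounded degree and polynomially bounded denominators, so that whenever an input rational $X$ is indecomposable, its distance to this closed semialgebraic set admits a generic lower bound of the form $1/\mathrm{poly}$ by standard effective Positivstellensatz or root-separation estimates. The combinatorial route instead pads the instances emitted by the \cref{th:dec}-reduction so that the discrete obstruction witnessing indecomposability (a parity, support, or character-based invariant of the underlying \sat encoding) cannot be removed by any perturbation smaller than an explicit $\delta$. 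Either route produces a gap-producing reduction, and \np-completeness of the weak-membership problem then follows by combining it with the verifier from the second paragraph.
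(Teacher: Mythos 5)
Your hardness strategy rests on a lemma that is not merely unproven but false. You want the reduction behind \prettyref{th:dec} to produce \no instances $X$ that are $\delta$-far, $\delta\ge1/\map{poly}$, from \emph{every} decomposable distribution. But decomposable distributions are dense in the metric used here: for any non-constant finite discrete $X$ and any $\eta>0$, the distribution of $X+B_\eta$ with $B_\eta$ a Bernoulli$(\eta)$ variable is decomposable by \prettyref{def:dec} and its pmf is within $2\eta$ of $\map p_X$ in $\ell^\infty$ (and in total variation). Hence no distribution whatsoever is bounded away from the decomposable set, and neither the semialgebraic route (which in any case would yield only exponentially small separations, not $1/\map{poly}$) nor any padding can create the gap you need; the ``nearly point-mass factor'' escape route has to be excluded structurally, not metrically. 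This is precisely where the paper takes a different path: it never bounds the distance from $X$ to the decomposable set. Instead it first proves the perturbed factorization problem \textsc{Distribution Decomposability}$_\epsilon$ (\prettyref{def:fdddece}) \np-hard (\prettyref{lem:fddde}) by a reduction from \textsc{Subset Sum}$(\Cdot,\Sigma_{\CDot}+\map{poly}\,\epsilon)\longleftrightarrow$\textsc{Partition} (\prettyref{lem:optpartsumex}), using the Vi\`ete map to argue that any $\epsilon$-approximate factorization of the specific characteristic polynomial $f_\set S=\prod_i(x^2+b_ix+1)$ must group its quadratic factors and therefore produces an approximate subset-sum solution with slack $\mathcal O(\epsilon)$; it then transfers hardness to the weak-membership formulation by showing that \yes (resp.\ \no) instances of the $\epsilon$-problem are surrounded by $\mathcal O(\epsilon)$-balls consisting solely of \yes (resp.\ \no) instances, so the weak oracle's answers decide the $\epsilon$-problem. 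In other words, the perturbation analysis is carried out on the roots of the constructed polynomial, where the admissible factorization structure is under control, rather than on a distance to the full decomposable set, where it cannot be.

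Your \np-membership argument also has a technical flaw: the set of triples $(Y,A,B)$ with $Y=A\ast B$, $A,B$ distributions and $\|\map p_X-\map p_Y\|_\infty\le\epsilon$ is not a polytope---the convolution constraint is bilinear in the unknowns $A$ and $B$---so vertex bit-size bounds for rational polytopes do not apply. The paper's containment proof (\prettyref{lem:npcontainment2}) is the plain witness/verifier argument; if you want an explicit bound on witness length, the correct route is to exploit the strict $\epsilon$-slack: round a decomposition $(A,B)$ to polynomially many bits and take $Y=\tilde A\ast\tilde B$, which is exactly decomposable and remains inside the $\epsilon$-ball. Finally, note that demanding $\delta\ge1/\map{poly}$ is stronger than the statement requires anyway, since $\epsilon$ is part of the input and may be exponentially small with polynomial bit-length.
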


It is interesting to contrast the results on maps and distributions. In the case of maps, the homogeneous $2$-divisibility problems are already \np-hard, whereas finding an inhomogeneous decomposition is straightforward. For distributions, on the other hand, the homogeneous divisibility problems are efficiently solvable to all orders, but becomes \np-hard if we relax it to the inhomogeneous decomposibility problem.

This difference is even more pronounced for infinite divisibility. The infinite divisibility problem for maps is \np-hard (shown in \cite{Cubitt2012a}), whereas the infinite divisibility and decomposibility problems for distributions are computationally trivial, since indivisible and indecomposible distributions are both dense---see \prettyref{sec:contdec} and \ref{sec:contdiv}.

The paper is divided into two parts. We first address stochastic matrix and \cptp divisibility in \prettyref{sec:cptpstochastic}, obtaining results on entry-wise positive matrix roots along the way. Divisibility and decomposability of probability distributions is addressed in \prettyref{sec:distributions}. In both sections, we first give an overview of the history of the problem, stating previous results and giving precise definitions of the problems. We introduce the necessary notation at the beginning of each section, so that each section is largely self-contained.

\section{CPTP and Stochastic Matrix Divisibility}\label{sec:cptpstochastic}
\subsection{Introduction}
% mathematically...
Mathematically, subdividing Markov chains is known as the \knownth{finite divisibility} problem. The simplest case is the question of finding a stochastic root of the transition matrix (or a \cptp root of a \cptp map in the quantum setting), which corresponds to asking for the evolution over half of the time interval.
%
% It is not at all clear that even this simplest case can be solved without resorting to a brute force enumeration of all possible roots and checking if one of them satisfies the defining properties of a stochastic matrix or \cptp map. As the number of such roots grows exponentially with the size of the problem---i.e.\ the dimension of the stochastic matrix or CPTP map---this would be very inefficient. In fact, this difficulty suggests that the problem might be equivalent to the famous  $\pp=\np$-conjecture, in which case finding an efficient algorithm would be highly unlikely.
%
% history of stochastic root
While the question of divisibility is rather simple to state mathematically, it is not clear a priori whether a stochastic matrix root for a given stochastic matrix exists at all. Historically, this has been a long-standing open question, dating back to at least 1962 \cite{Kingman1962}. Matrix roots were also suggested early on in other fields, such as economics and general trade theory, at least as far back as 1967 \cite{Waugh1967}, to model businesses and the flow of goods. Despite this long history, very little is known about the existence of stochastic roots of stochastic matrices. The most complete result to date is a full characterization of $2\times2$ matrices, as given for example in \cite{He2003}. The authors mention that \emph{``\dots it is quite possible that we have to deal with the stochastic root problem on a case-by-case basis.''} This already suggests that there might not be a simple mathematical characterisation of divisible stochastic matrices---meaning one that is simpler than enumerating the exponentially many roots and checking each one for stochasticity.

% relation to positive roots
There are similarly few results if we relax the conditions on the matrix normalization slightly, and ask for (entry-wise) nonnegative roots of (entry-wise) nonnegative matrices---for a precise formulation, see \prettyref{def:nonneg} and \ref{def:stoch}. An extensive overview can be found in \cite{Minc1988}.
%
% history of cptp root
Following this long history of classical results, quantum channel divisibility recently gained attention in the quantum information literature. The foundations were laid in \cite{Wolf2008}, where the authors first introduced the notion of \knownth{channel divisibility}. A divisible quantum channel is a \cptp map that can be written as a nontrivial concatenation of two or more quantum channels.

% infinite divisibility/markovianity
A related question is to ask for the evolution under infinitesimal time steps, which is equivalent to existence of a logarithm of a stochastic matrix (or \cptp map) that generates a stochastic (resp.\ \cptp) semi-group. Classically, the question is known as \knownth{Elfving's} problem or the \knownth{embedding problem}, and seems to date back even further than the finite case to 1937 \cite{Elfving1937}. In the language of Markov chains, this corresponds to determining whether a given stochastic matrix can be embedded into an underlying continuous time Markov chain. Analogously, infinite quantum channel divisibility---also known as the \knownth{Markovianity} condition for a \cptp map---asks whether the dynamics of the quantum system can be described by a \knownth{Lindblad} master equation \cite{Lindblad1976,Gorini1976}. The infinite divisibility problems in both the classical and quantum case were recently shown to be \np-hard \cite{Cubitt2012a}. Formulated as weak membership problems, these results imply that it is \np-hard to extract dynamics from experimental data \cite{Cubitt2012}.

However, while related, it is not at all clear that there exists a reduction of the finite divisibility question to the case of infinite divisibility. In fact, mathematically, the infinite divisibility case is a special case of finite divisibility, as a stochastic matrix is infinitely divisible if and only if it admits an $n$\th root for all $n\in\sset N$ \cite{Kingman1962}.

%in this work...
The finite divisibility problem for stochastic matrices is still an open question, as are the nonnegative matrix and \cptp map divisibility problems. We will show that the question of existence of stochastic roots of a stochastic matrix is \np-hard. We also extend this result to (doubly) stochastic matrices, nonnegative matrices, and \cptp maps. % We also show that the results exhibits only polynomially growing bit complexity.

We start out by introducing the machinery we will use to prove \prettyref{th:nonneghard} and \ref{th:stochhard} in \prettyref{sec:prelims1}. A reduction from the quantum to the classical case can be found in \prettyref{sec:cptp}, from the nonnegative to the stochastic case in \prettyref{sec:root} and the main result---in a mathematically rigorous formulation---is then presented as \prettyref{th:reduction} in \prettyref{sec:sat}.

\subsection{Preliminaries}\label{sec:prelims1}
\subsubsection{Roots of Matrices}
In our study of matrix roots we restrict ourselves to the case of square roots. The more general case of $p$\th roots of matrices remains to be discussed. We will refer to square roots simply as roots. To be explicit, we state the following definition.
\begin{definition}\label{def:roots}
	Let $\op M\in\sset K^{d\times d}$, $d\in\sset N$, $\sset K$ some field. Then we say that $\op R\in\sset K^{d\times d}$ is a root of $\op M$ if $\op R^2=\op M$. We denote the set of all roots of $\op M$ with $\sqrt{\op M}$.
\end{definition}

Following the theory of matrix functions---see for example \cite{Higham1987}---we remark that in the case of nonsingular $\op M$, $\sqrt{\op M}$ is nonempty and can be expressed in Jordan normal form via $\sqrt{\op M}=\op Z\op J\op Z^{-1}$ for some invertible $\op Z$, where $\op J=\map{diag}(\op J_1^\pm,\ldots,\op J_m^\pm)$. Here $\op J_i^\pm$ denotes the $\pm$-branch of the root function $f(x)=\sqrt x$ of the Jordan block corresponding to the $i$\th eigenvalue $\lambda_i$,
\[
\op J_i^\pm=\begin{pmatrix}
\pm f(\lambda_i) & \pm f'(\lambda_i)/1! & \ldots & \pm f^{(m_i-1)}(\lambda_i)/(m_i-1)! \\
0 & \pm f(\lambda_i) & \ddots & \vdots \\
\vdots & \ddots & \ddots & \pm f'(\lambda_i)/1! \\
0 & \hdots & 0 & \pm f(\lambda_i)
\end{pmatrix}\point
\]
If $\op M$ is diagonalisable, $\op J$ simply reduces to the canonical diagonal form $\op J=\map{diag}(\pm\sqrt{\lambda_1},\ldots,\pm\sqrt{\lambda_m})$.

If $\op M$ is derogatory---i.e.\ there exist multiple Jordan blocks sharing the same eigenvalue $\lambda$---it has continuous families of so-called \emph{nonprimary} roots $\sqrt{\op M}=\op Z\op U\op J\op U^{-1}\op Z^{-1}$, where $\op U$ is an arbitrary nonsingular matrix that commutes with the Jordan normal form $[\op U,\op J]=0$.

We cite the following result from \cite[Th.~2.6]{Higham2011}.
\begin{theorem}[Classification of roots]\label{th:roots}
	Let $\op M\in\sset K^{d\times d}$ have the Jordan canonical form $\op Z\op\Lambda\op Z^{-1}$, where $\op\Lambda=\map{diag}(\op J_0,\op J_1)$, such that $\op J_0$ collects all Jordan blocks corresponding to the eigenvalue $0$, and $\op J_1$ collects the remaining ones. Assume further that
	\[d_i:=\dim(\map{ker}\,\op M^i)-\dim(\map{ker}\,\op M^{i-1})\]
	has the property that for all $i\in\sset N_{\ge0}$, no more than one element of the sequence satisfies $d_i\in(2i,2(i+1))$.
	Then $\sqrt{\op M}=\op Z\sqrt{\op\Lambda}\op Z^{-1}$, where $\sqrt{\op\Lambda}=\map{diag}(\sqrt{\op J_0},\sqrt{\op J_1})$.
\end{theorem}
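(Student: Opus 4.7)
The plan is to derive the characterisation by reducing $\sqrt{\op M}$ to a block-diagonal problem and then treating the invertible and nilpotent pieces of $\op M$ separately. My first step is to observe that every $\op R\in\sqrt{\op M}$ commutes with $\op M$, because $\op R\op M=\op R^{3}=\op M\op R$. Commutation forces $\op R$ to preserve each generalised eigenspace of $\op M$, so in the Jordan basis $\op Z$ one has $\op R=\op Z\,\map{diag}(\op R_{0},\op R_{1})\,\op Z^{-1}$ with $\op R_{0}^{2}=\op J_{0}$ and $\op R_{1}^{2}=\op J_{1}$. This already produces the claimed block-diagonal template; what remains is the existence and enumeration of the two factors.

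For the invertible part $\op J_{1}$, the function $f(z)=\sqrt z$ is holomorphic on any simply connected neighbourhood of the spectrum of $\op J_{1}$ that avoids the origin. I would invoke the primary matrix function formula spelled out just before the theorem: on each Jordan block of $\op J_{1}$ it returns a root with a $\pm$-branch choice, giving $2^{s}$ primary roots, where $s$ is the number of distinct nonzero eigenvalues of $\op J_{1}$. Derogatory nonzero eigenvalues contribute the additional non-primary freedom $\op U\sqrt{\op J_{1}}\op U^{-1}$ with $[\op U,\op J_{1}]=0$, exactly as foreshadowed in the paragraph preceding the theorem.

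The main obstacle is the nilpotent piece $\op J_{0}$, since $\sqrt z$ is not analytic at the eigenvalue $0$ and the primary-function machinery breaks down. My approach would be combinatorial: squaring a single $m\times m$ Jordan block at the origin produces a matrix with Jordan structure $(\lceil m/2\rceil,\lfloor m/2\rfloor)$, so $\op J_{0}$ admits a nilpotent square root precisely when the multiset of its Jordan block sizes can be lifted under this splitting rule. Since $d_{i}$ equals the number of Jordan blocks of $\op M$ at $0$ of size at least $i$, the lift exists iff one can find a non-increasing sequence $d'_{1}\geq d'_{2}\geq\cdots\geq 0$ of non-negative integers satisfying $d'_{2i-1}+d'_{2i}=d_{i}$ for every $i\geq 1$. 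The only integer in $(2i,2(i+1))$ is $2i+1$, so I would argue by a case analysis on the monotonicity constraints between successive $d_{i}$ that a single offending index can always be absorbed into an odd-sized ancestor block of $\op R_{0}$, whereas two would obstruct the required monotonicity of $(d'_{j})$.

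The hardest step, in my view, will be pinning down the exact equivalence between this local parity condition on the ascent sequence and the global existence of a compatible monotone lift, and then checking that every square root of $\op J_{0}$ arises from such a lift up to the commutant freedom parametrised by a matrix $\op U$ with $[\op U,\op J_{0}]=0$. Once that is in hand, reassembling $\op R_{0}\oplus\op R_{1}$ and conjugating back by $\op Z$ yields the parametrisation $\sqrt{\op M}=\op Z\,\map{diag}(\sqrt{\op J_{0}},\sqrt{\op J_{1}})\,\op Z^{-1}$ asserted in the theorem.
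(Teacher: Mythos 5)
A preliminary point of comparison: the paper does not prove this statement at all---it is quoted from \cite[Th.~2.6]{Higham2011}, which in turn rests on the classical Cross--Lancaster classification---so there is no in-paper argument to match, and your outline is in fact the standard route taken in that literature. The parts you do carry out are correct: $\op R\op M=\op R^3=\op M\op R$ forces every root to preserve the generalised eigenspaces, giving $\sqrt{\op M}\subseteq\op Z\,\map{diag}(\sqrt{\op J_0},\sqrt{\op J_1})\,\op Z^{-1}$ with the reverse inclusion trivial; the splitting rule $(\lceil m/2\rceil,\lfloor m/2\rfloor)$ for the square of a nilpotent Jordan block and the reformulation ``$d_i=d'_{2i-1}+d'_{2i}$ with $(d'_j)$ non-increasing'' are exactly the right combinatorial reduction. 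Note, incidentally, that your first step already delivers the set equality in the conclusion with no hypothesis on the $d_i$ whatsoever; the ascent-sequence condition is really an existence criterion---it decides when $\sqrt{\op J_0}$, and hence $\sqrt{\op M}$, is nonempty---and that existence statement is the actual content of the cited theorem.

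The genuine gap is precisely the step you flag as hardest, and it is all of the content beyond the easy decoupling: proving that the ascent-sequence condition is equivalent to the existence of the monotone lift $(d'_j)$, and that every root of $\op J_0$ arises from such a lift up to conjugation by the centraliser of $\op J_0$. Moreover, you read the hypothesis literally as ``at most one $i$ with $d_i=2i+1$,'' following the paper's garbled transcription; in Cross--Lancaster and Higham--Lin the interval index is a separate parameter: for every $\nu\ge 0$ at most one element of the sequence lies in $(2\nu,2(\nu+1))$, equivalently no two consecutive terms of the non-increasing sequence $(d_i)$ equal the same odd integer. That is the condition your lifting criterion is equivalent to (the greedy choice $d'_{2i-1}=\lceil d_i/2\rceil$, $d'_{2i}=\lfloor d_i/2\rfloor$ succeeds exactly when no odd value is repeated, while a repeated odd value obstructs every choice); a case analysis aimed at the per-index condition $d_i=2i+1$ cannot close, since e.g.\ a single Jordan block of size $2$ at $0$ satisfies that condition vacuously yet has no square root. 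So the architecture is right, but the crux is deferred and the target condition must be corrected before it can be proved.
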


For a given matrix, the classification gives the set of all roots. If $\op M$ is a real matrix, a similar theorem holds and there exist various numerical algorithms for calculating real square roots, see for example \cite{Higham1987}.

\subsubsection{Roots of Stochastic Matrices}\label{sec:smatroot}
Remember the following two definitions.
\begin{definition}\label{def:nonneg}
	A matrix $\op M\in\sset K^{d\times d}$ is said to be nonnegative if $0\le\op M_{ij}\ \forall i,j=0,\ldots,d$.
\end{definition}
\begin{definition}\label{def:stoch}
	A matrix $\op Q\in\sset K^{d\times d}$ is said to be stochastic if it is nonnegative and $\sum_{k=1}^d\op Q_{ik}=1\ \forall i=0,\ldots,d$.
\end{definition}

In contrast to finding a general root of a matrix, very little is known about the existence of nonnegative roots of nonnegative matrices---or stochastic roots of stochastic matrices---if $d\ge3$. For stochastic matrices and in the case $d=2$, a complete characterization can be given explicitly, and for $d\ge3$, all real stochastic roots that are functions of the original matrix are known, as demonstrated in \cite{He2003}. Further special classes of matrices for which a definite answer exists can be found in \cite{Higham2011}. But even for $d=3$, the general case is still an open question---see \cite[ch. 2.3]{Lin2011} for details.

Indeed, a stochastic matrix may have no stochastic root, a primary or nonprimary root---or both. To make things worse, if a matrix has a $p$\th stochastic root, it might or might not have a $q$\th stochastic root if $p\nmid q$---$p$ is not a divisor of $q$---, $q>p$ or $q\nmid p$, $q<p$.

A related open problem is the inverse eigenspectrum problem, as described in the extensive overview in \cite{Egleston2004}. While the sets $\Omega_n\subset\sset D$---denoting all the possible eigenvalues of an $n$-dimensional nonnegative matrix---can be given explicitly, and hence also $\Omega_n^p$, almost nothing is known about the sets for the entire eigenspectrum. Any progress in this area might yield necessary conditions for the existence of stochastic roots.

In recent years, some approaches have been developed to approximate stochastic roots numerically, see the comments in \cite[sec. 4]{He2003}. Unfortunately, most algorithms are highly unstable and do not necessarily converge to a stochastic root. A direct method using nonlinear optimization techniques is difficult and depends heavily on the algorithm employed \cite{Lin2011}.

It remains an open question whether there exists an efficient algorithm that decides whether a stochastic matrix $\op Q$ has a stochastic root.

\begin{em}
	In this paper, we will prove that this question is \np-hard to answer.
\end{em}

\subsubsection{The Choi Isomorphism}
For the results on \cptp maps, we will need the following basic definition and results.
\begin{definition}\label{def:cptp}
	Let $\op A:\hr\longrightarrow\hr$ be a linear map on $\hr=\sset C^{d\times d}$. We say that $\op A$ is positive if for all Hermitian and positive definite $\op \rho\in\hr$, $\op A\op\rho$ is Hermitian and positive definite. It is said to be completely positive if $\op A\otimes\1_n$ is positive $\forall n\in\sset N$.

	A map $\op A$ which is completely positive and trace-preserving---i.e. $\map{tr}(\op A\rho)=\map{tr}\ \rho\ \forall \rho\in\hr$---is called a \emph{completely positive trace-preserving} map, or short \cptp map.
\end{definition}

In contrast to positivity, complete positivity is easily characterized using the well-known \knownth{Choi-Jamiolkowski} isomorphism---cf.\ \cite[Th.~2]{Choi1975}.
\begin{remark}\label{rem:choi}
	Let the notation be as in \prettyref{def:cptp} and pick a basis $e_1,\ldots,e_d$ of $\sset C^d$. Then $\op A$ is completely positive if and only if the \emph{Choi} matrix
	\[
	\map C_\op A:=(\1_d\otimes\op A)\Omega\Omega\transpose=\sum_{i,j=1}^d e_ie_j\transpose\otimes\op A(e_ie_j\transpose)
	\]
	is positive semidefinite, where $\Omega:=\sum_{i=1}^d e_i\otimes e_i$.
\end{remark}

The condition of trace-preservation then translates to the following.
\begin{remark}
	A map $\op A$ is trace-preserving if and only if $\map{tr_2}(\map C_\op A)=\1_d$, where $\map{tr_2}$ denotes the partial trace over the second pair of indices.
\end{remark}

\subsection{Equivalence of Computational Questions}\label{sec:equiv}
In the following we denote with $S$ some arbitrary finite index set, not necessarily the same for all problems. We begin by defining the following  decision problems.

\begin{definition}[\textsc{\cptp Divisibility}]\leavevmode\label{def:cptpdiv}
\begin{description}
\item[Instance.] \cptp map $\op B\in \sset Q^{d\times d}$.
\item[Question.] Does there exist a \cptp map $\op A:\op A^2=\op B$?
\end{description}
\end{definition}

\begin{definition}[\textsc{\cptp Root}]\leavevmode
\begin{description}
\item[Instance.] Family of matrices $(\op A_s)_{s\in\set S}$ that comprises all the roots of a matrix $\op B$.
\item[Question.] Does there exist an $s\in S:\op A_s$ is a \cptp map?
\end{description}
\end{definition}

\begin{definition}[\textsc{Stochastic Divisibility}]\leavevmode
\begin{description}
\item[Instance.] Stochastic matrix $\op P\in\sset Q^{d\times d}$.
\item[Question.] Does there exist a stochastic matrix $\op Q: \op Q^2=\op P$?
\end{description}
\end{definition}

\begin{definition}[\textsc{Stochastic Root}]\leavevmode
\begin{description}
\item[Instance.] Family of matrices $(\op Q_s)_{s\in\set S}$ comprising all the roots of a matrix $\op P$.
\item[Question.] Does there exist an $s\in\set S: \op Q_s$ stochastic?
\end{description}
\end{definition}

\begin{definition}[\textsc{Nonnegative Root}]\leavevmode\label{def:nonnegprob}
\begin{description}
\item[Instance.] Family of matrices $(\op M_s)_{s\in\set S}$ comprising all the roots of a matrix $\op N$, where all $\op M_s$ have at least one positive entry.
\item[Question.] Does there exist an $s\in\set S: \op M_s$ nonnegative?
\end{description}
\end{definition}

\begin{figure}
\centering
\begin{tikzpicture}[node distance=2.38cm,auto,every node/.style={scale=0.9}]
  \node (sat) [fill=myGreenL] {\sat};
  \node (nnroot) [above of=sat,node distance=1.42cm] {\textsc{\shortstack{Nonnegative\\ Root}}};
  \node (nndiv) [left of=nnroot,node distance=3.92cm] {\textsc{\shortstack{Nonnegative\\ Divisibility}}};
  \node (sroot) [above left of=nnroot] {\textsc{\shortstack{Stochastic\\ Root}}};
  \node (sdiv) [above left of=sroot,node distance=2.18cm] {\textsc{\shortstack{Stochastic\\ Divisibility}}};
  \node (cproot) [above right of=sroot,node distance=2.18cm] {\textsc{\shortstack{\cptp\\ Root}}};
  \node (cpdiv) [right of=cproot,node distance=4.58cm] {\textsc{\shortstack{\cptp\\ Divisibility}}};
  \node (dsroot) [right of=nnroot,node distance=4.5cm] {\textsc{\shortstack{Doubly Stochastic\\ Root}}};
  \node (dsdiv) [above right of=dsroot,node distance=2.18cm] {\textsc{\shortstack{Doubly Stochastic\\ Divisibility}}};
  \draw[->,swap] (sat) to node {\prettyref{sec:sat}} (nnroot);
  \draw[->,swap] (nnroot) to node[pos=.45] {\prettyref{sec:root}} (sroot);
  \draw[->,dotted] (sroot) [bend right=20] to node {} (nnroot);
  \draw[->] (sroot) to node {} (sdiv);
  \draw[->,dashed] (sdiv) [bend right=20] to node {} (sroot);
  \draw[->] (nnroot) to node {} (dsroot);
  \draw[->] (sroot) to node[swap,pos=.7] {\prettyref{sec:cptp}} (cproot);
  \draw[->] (cproot) to node[swap] {} (cpdiv);
  \draw[->,dashed,transform canvas={yshift=2.6mm}] (cpdiv.west) [bend right=12] to node {} (cproot.east);
  \draw[->] (dsroot) to node {} (dsdiv);
  \draw[->,dashed] (dsdiv) [bend right=20] to node {} (dsroot);
  \draw[->] (nnroot) to node {} (nndiv);
  \draw[->,dashed] (nndiv) [bend right=12] to node {} (nnroot);
\end{tikzpicture}
\caption{Complete chain of reduction for our programs. The dashed line between the \textsc{Divisibility} and \textsc{Root} problems hold for non-derogatory matrices, respectively. The dotted line between \textsc{Stochastic Root} and \textsc{Nonnegative Root} holds only for irreducible matrices. The doubly stochastic and nonnegative branch are included for completeness but not described in detail here---see \prettyref{cor:doubly}.}
\label{fig:reduction}
\end{figure}

\begin{theorem}\label{th:reduction}
The reductions as shown in \prettyref{fig:reduction} hold.
\end{theorem}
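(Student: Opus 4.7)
The plan is to verify each arrow of \prettyref{fig:reduction} as a separate polynomial-time reduction, grouped into three layers: (a) the \textsc{Root} $\leftrightarrow$ \textsc{Divisibility} arrows for a fixed matrix class, (b) the structural reductions that change matrix class, and (c) the main combinatorial reduction \sat $\to$ \textsc{Nonnegative Root}, which carries the \np-hardness content.

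Layer (a) is essentially bookkeeping. A \textsc{Root} instance is specified by a family $(\op R_s)_{s\in\set S}$ of all roots of a matrix $\op B$, and I recover $\op B$ by squaring any family member and forwarding it to the \textsc{Divisibility} oracle; the answer is preserved since the root constraints in the two definitions coincide. For the dashed reverse arrows, under the non-derogatory hypothesis \prettyref{th:roots} implies that every root is primary and hence indexed by a sign choice on each Jordan block, giving at most $2^d$ explicit roots computable in polynomial time from the Jordan canonical form.

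For layer (b), the plan is to first address \textsc{Stochastic} $\to$ \cptp \textsc{Root} in \prettyref{sec:cptp} via the Choi--Jamiolkowski isomorphism: given stochastic $\op P$, the Kraus operators $K_{ij}=\sqrt{\op P_{ij}}\ket{j}\bra{i}$ define a \cptp map $\op B$ whose action on diagonal density matrices reproduces $\op P$ and which commutes with total dephasing $\Delta$. I would then argue that every \cptp root $\op A$ of $\op B$ must inherit commutation with $\Delta$, so that its restriction to diagonals is a stochastic root of $\op P$. Next, for \textsc{Nonnegative} $\to$ \textsc{Stochastic Root} in \prettyref{sec:root}, the plan is a Perron--Frobenius similarity: for irreducible nonnegative $\op N$ with Perron eigenvalue $r$ and positive right eigenvector $\mathbf v$, the matrix $\op P=r^{-1}\op D^{-1}\op N\op D$ with $\op D=\map{diag}(\mathbf v)$ is stochastic, and conjugation by the positive diagonal $\op D$ preserves entry-wise signs, giving a bijection between nonnegative roots of $\op N$ and stochastic roots of $\op P$. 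The dotted reverse for irreducible stochastic $\op P$ holds because any nonnegative root $\op M$ satisfies $\op M^2\mathbf 1=\mathbf 1$ and inherits irreducibility, so by uniqueness of the Perron eigenvector $\op M\mathbf 1=\mathbf 1$ and $\op M$ is already stochastic. The doubly stochastic and nonnegative branches reuse the same template with symmetric, respectively trivial, normalization, and their \textsc{Divisibility} sub-arrows inherit layer (a).

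The main obstacle is layer (c), the \sat $\to$ \textsc{Nonnegative Root} reduction in \prettyref{sec:sat}. The plan is to encode a \sat instance with $n$ variables into a non-derogatory nonnegative matrix $\op N$ whose primary roots are in bijection with sign patterns $\epsilon\in\{\pm 1\}^n$ via \prettyref{th:roots}, and whose eigenvector matrix $\op Z$ is engineered so that the root $\op M_\epsilon=\op Z\,\map{diag}(\epsilon_i\sqrt{\lambda_i})\,\op Z^{-1}$ is entry-wise nonnegative exactly when $\epsilon$ encodes a satisfying assignment. Each clause should contribute a small block of entries whose signs depend linearly on the associated $\epsilon_i$ through cofactors of $\op Z$, so that nonnegativity of those entries expresses the 1-in-3 constraint on the three literals. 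The hard part will be satisfying three conditions simultaneously: (i) the eigenvalues of $\op N$ must be distinct and rational with polynomially bounded bit-size so that $\op N$ admits exactly $2^n$ primary roots and no spurious nonprimary families from eigenvalue coincidences; (ii) the clause gadgets must not interfere across variables after conjugation by $\op Z$, so that the nonnegativity of one clause block is not accidentally broken by a sign choice on an unrelated variable; and (iii) the raw matrix $\op N$ itself must remain entry-wise nonnegative while the whole construction is polynomial in the size of the \sat instance. Balancing (i)--(iii) is the technical crux.
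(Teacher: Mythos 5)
Your layer (a) matches the paper's bookkeeping (modulo the small points that before querying the \textsc{Divisibility} oracle one must check that the recovered square is itself stochastic resp.\ \cptp, and that a list of up to $2^d$ roots is not polynomial-size), but both class-changing arrows in your layer (b) have genuine gaps. For \textsc{Stochastic Root}$\rightarrow$\textsc{\cptp Root}, your channel with Kraus operators $K_{ij}=\sqrt{\op P_{ij}}\ket{j}\bra{i}$ is in effect the paper's embedding $\map{emb}\,\op P$, but your backward direction rests on the claim that \emph{every} \cptp root $\op A$ of $\op B$ commutes with total dephasing. That does not follow from $\op A^2=\op B$: a root commutes with $\op B$, not with everything that commutes with $\op B$, and $\op B$ is extremely degenerate (rank at most $d$ on a $d^2$-dimensional space), so it has many roots that do not preserve the diagonal subspace. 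For such an $\op A$ the diagonal restriction $Q_{ij}=\bra{j}\op A(e_ie_i\transpose)\ket{j}$ is stochastic but need not square to $\op P$, because the off-diagonal part of $\op A(e_ie_i\transpose)$ feeds back into the diagonal under the second application of $\op A$. The paper never needs such a statement: it pushes the \emph{given} family of all roots of $\op P$ through $\map{emb}$ and uses \prettyref{lem:cptp} (the Choi matrix of $\map{emb}\,\op Q$ is positive semidefinite with partial trace $\1$ iff $\op Q$ is stochastic), so the yes/no equivalence is checked member-by-member inside the family rather than over arbitrary \cptp roots of $\op B$.

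The \textsc{Nonnegative Root}$\rightarrow$\textsc{Stochastic Root} step is the bigger problem. The Perron--Frobenius similarity $\op P=r^{-1}\op D^{-1}\op N\op D$ requires the underlying matrix to be nonnegative and \emph{irreducible} with a strictly positive Perron vector, and your bijection between nonnegative and stochastic roots additionally uses uniqueness of that eigenvector. Neither is granted by \prettyref{def:nonnegprob}, and, crucially, the matrices produced by the \sat encoding of \prettyref{sec:sat} are deliberately padded with zero blocks and have large kernels, hence are reducible: your reduction is unavailable exactly on the hardness instances it is supposed to transport (there is also the issue that $r$, $\sqrt r$ and $\op D$ are in general irrational, which a Karp reduction with rational data must control). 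This is precisely why the figure marks the reverse arrow as dotted, valid only for irreducible matrices, and why the paper instead proves \prettyref{lem:nonnegequiv1} and \ref{lem:nonnegequiv2}: the explicit block matrix $\op Q_s$ has all row sums equal to $1$ by construction, so it is stochastic iff $\op M_s$ is nonnegative with no spectral assumptions whatsoever, and the second lemma verifies that the remaining sign choices among the roots of the common square $\op P$ contain no stochastic matrices, which is what makes $(\op Q_s)_{s\in\set S}$ a legitimate \textsc{Stochastic Root} instance. Finally, your layer (c) is a programme rather than a proof: the clause gadgets, the masks neutralizing the unwanted off-diagonal and orthonormalization entries, and the lifting of degeneracies (\prettyref{lem:techC}, \ref{lem:techE}, \ref{lem:techD}, \ref{lem:sing}) are the actual content of the \sat arrow and are left open---though, to be fair, the paper's own proof of this theorem also defers that arrow to \prettyref{sec:sat}.
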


\begin{proof}
The implication \textsc{Stochastic Divisibility}$\longleftarrow$\textsc{Stochastic Root} needs one intermediate step. If $\op P$ is not stochastic, the answer is negative. If it is stochastic, we can apply \textsc{Stochastic Divisibility}.
The opposite direction holds for non-derogatory stochastic $\op P$: in this case we can enumerate all roots of $\op P$ as a finite family which forms a valid instance for \textsc{Stochastic Root}.

The reduction \textsc{Stochastic Root}$\longleftarrow$\textsc{Nonnegative Root} can be resolved by \prettyref{lem:nonnegequiv1} and \prettyref{lem:nonnegequiv2}---we construct a family of matrices $(\op Q_s)_{s\in\set S}$ that contains a stochastic root iff $(\op M_s)_{s\in\set S}$ contains a nonnegative root. The result then follows from applying \textsc{Stochastic Root}. If our stochastic matrix $\op P$ is irreducible, then any nonnegative root $\op Q_{s'}:\op Q_{s'}^2=\op P$ is stochastic, and in that case \textsc{Stochastic Root}$\longleftrightarrow$\textsc{Nonnegative Root}---see \cite[sec. 3]{Higham2011} for details.

The link \textsc{\cptp Divisibility}$\longleftarrow$\textsc{\cptp Root} again needs the following intermediate step. If $\op A$ is not \cptp, the answer is negative. If it is \cptp, then we can apply \textsc{\cptp Divisibility}. Similarly, if $\op A$ is non-derogatory, the reduction works in the opposite direction as well.

The direction \textsc{\cptp Root}$\longleftarrow$\textsc{Stochastic Root} follows from \prettyref{cor:cptpequiv1}. We start out with a family $(\op Q_s)_{s\in S}$ comprising all the roots of a stochastic matrix $\op P$. Then let $(\op A_s:=\map{emb}\ \op Q_s)_{s\in S}$---this family then comprises all of the roots of $\op B:=\op A^2_k\equiv\op A^2_s\ \forall k,s$.
Furthermore, by \prettyref{lem:cptp}, there exists a \cptp $\op A_s$ if and only if there exists a stochastic $\op Q_s$, and the reduction follows.

Finally, we can extend our reduction to the programs \textsc{Doubly Stochastic Root} and \textsc{Doubly Stochastic Divisibility} as well as \textsc{Nonnegative Divisibility}, defined analogously, see our comment in \prettyref{cor:doubly} and the complete reduction tree in \prettyref{fig:reduction}.
\end{proof}

At this point, we observe the following fact.
\begin{lemma}\label{lem:npcontainment}
All the above \textsc{Divisibility} and \textsc{Root} problems in \prettyref{def:cptpdiv} to \ref{def:nonnegprob} are contained in \np.
\end{lemma}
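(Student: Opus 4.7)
The plan is to exhibit, for each of the listed problems, a polynomially bounded certificate together with a polynomial-time verification procedure.

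For each of the three \textsc{Root} problems the full family of candidate matrices is given as part of the input, so a certificate is simply an index $s_0 \in \set S$. Verification then amounts to a single property check on the associated rational matrix: entry-wise nonnegativity for \textsc{Nonnegative Root}; additionally unit row sums for \textsc{Stochastic Root}; or, by \prettyref{rem:choi}, positive semidefiniteness of the Choi matrix together with trace-preservation of the map for \textsc{\cptp Root}. Each such test runs in time polynomial in the bit-length of the entries of the identified matrix, and hence polynomial in the input size.

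For the \textsc{Divisibility} problems the certificate is the claimed square root itself. The subtlety is that a real square root of a rational matrix need not have rational entries, so one encodes each entry of the witness as an algebraic number in the standard (minimal polynomial plus isolating interval) format. By \prettyref{th:roots}, every root of a rational $d \times d$ matrix $\op B$ lies in an algebraic extension of $\sset Q$ generated by square roots of the eigenvalues of $\op B$, and each such entry admits a minimal polynomial of degree and coefficient bit-length polynomial in $d$ and in the bit-length of $\op B$. Standard polynomial-time algorithms for exact arithmetic and sign determination on algebraic numbers then permit checking $\op A^2 = \op B$ and the appropriate positivity or normalisation constraint---nonnegativity, nonnegativity plus unit row sums, or complete positivity via the Choi matrix of \prettyref{rem:choi} plus $\map{tr}_2\,\map C_{\op A}=\1_d$---in polynomial time.

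The main obstacle is the derogatory case, in which $\op B$ admits a continuous family of nonprimary roots parameterised by matrices commuting with its Jordan form (see the discussion preceding \prettyref{th:roots}). Here one must argue that whenever an admissible root exists within this family, a succinct description of one such root can be extracted. The plan is to observe that the set of admissible roots is semialgebraic of polynomially bounded description complexity, so that by standard quantifier-elimination bounds, whenever it is nonempty it contains a point whose coordinates are algebraic of polynomial degree and coefficient bit-length; the verification then proceeds exactly as in the non-derogatory case. An alternative route is to appeal to the reductions in \prettyref{th:reduction}, which in the non-derogatory case equate each \textsc{Divisibility} problem with its \textsc{Root} counterpart and thereby inherit NP containment from the first paragraph, after which only the derogatory case requires the argument above.
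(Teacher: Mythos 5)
Your core strategy---take the candidate root itself (or an index into the given family) as the witness and verify by squaring and checking nonnegativity, row sums, or complete positivity via \prettyref{rem:choi}---is exactly the paper's approach, which simply asserts that witness and verifier are polynomial-sized. You go further and correctly identify the subtlety the paper elides: a root of a rational matrix need not have rational entries, and in the derogatory case the roots form continuous families. The problem is that the quantitative claims you invoke to close this gap are false as stated. The entries of a square root of a rational $d\times d$ matrix $\op B$ lie in the field generated by the eigenvalues of $\op B$ and their square roots, whose degree over $\sset Q$ can be of order $d!\,2^d$; generically an individual entry already has $\Omega(2^d)$ Galois conjugates, so its minimal polynomial has degree exponential in $d$, not ``polynomial in $d$ and the bit-length of $\op B$.'' Likewise, the standard bounds from quantifier elimination / the existential theory of the reals guarantee, for a nonempty semialgebraic set cut out by constant-degree polynomials in $n=d^2$ variables, only a point whose coordinates are algebraic of degree $2^{\mathcal O(d^2)}$ with comparable bit-length---singly exponential in the number of variables, not polynomial. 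Those bounds place the problem in the existential theory of the reals (hence PSPACE), which is precisely \emph{not} the same as exhibiting an \np\ witness; asserting polynomial degree here assumes what needs to be proved.

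Your fallback via \prettyref{th:reduction} does not rescue the argument either: the passage from \textsc{Divisibility} to \textsc{Root} in the non-derogatory case requires enumerating \emph{all} roots of $\op B$ (up to $2^d$ primary roots) as the new instance, which is not a polynomial-time reduction, and the \np\ membership you establish for the \textsc{Root} problems in your first paragraph is relative to inputs that already contain the whole family, so nothing transfers back to \textsc{Divisibility}. To match the paper you would either need an argument that whenever an admissible root exists, some admissible root admits a polynomial-size exact representation on which squaring and the positivity/normalisation checks can be performed in polynomial time (this is what the paper implicitly assumes when it declares the witness ``clearly poly-sized''), or you would have to weaken the conclusion. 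As it stands, the step from ``an admissible root exists'' to ``a polynomially verifiable certificate exists'' is the genuine gap in your write-up.
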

\begin{proof}
It is straightforward to come up with a witness and a verifier circuit that satisfies the definition of the decision class \np. For example in the \cptp case, a witness is a matrix root that can be checked to be a \cptp map using \prettyref{rem:choi} and squared in polynomial time, which is the verifier circuit. Both circuit and witness are clearly poly-sized and hence the claim follows.
\end{proof}

By encoding an instance of \sat into a family of nonnegative matrices $(\op M_s)_{s\in\set S}$, we show the implication \sat\!\!$\longrightarrow$\textsc{Nonnegative Root} and \sat\!\!$\longrightarrow$\textsc{(Doubly) Stochastic/\cptp Divisibility}, accordingly, from which \np-hardness of \textsc{(Doubly) Stochastic/\cptp Divisibility} follows. The entire chain of reduction can be seen in \prettyref{fig:reduction}.

\subsection{Reduction of \textsc{Stochastic Root} to \textsc{CPTP Root}}\label{sec:cptp}
This reduction is based on the following embedding.
\begin{definition}
Let $\{e_i\}$ be an orthonormal basis of $\sset K^d$. The embedding $\map{emb}$ is defined as
\begin{align*}
\map{emb}:\sset K^{d\times d}&\longhookrightarrow\sset K^{d^2\times d^2}\comma\\
\op A&\longmapsto\op B:=\sum_{i,j=1}^d\op A_{ij}(e_i\otimes e_i)(e_j\otimes e_j)\transpose=\sum_{i,j=1}^d\op A_{ij}(e_ie_j\transpose)\otimes(e_ie_j\transpose)\point
\end{align*}
\end{definition}
We observe the following.
\begin{lemma}\label{lem:cptp}
We use the same notation as in \prettyref{rem:choi}. Let $\op A\in\sset K^{d\times d}$ and $\op B:=\map{emb}\ \op A$. Then $\op A$ is positive (nonnegative) if and only if the Choi matrix  $\map C_\op B$ is positive (semi-)definite. Furthermore, the row sums of $\op A$ are $1$---i.e. $\sum_{j=1}^d\op A_{ij}=1\ \forall j=1,\ldots,d$---if and only if $\map{tr_2}(\map C_\op B)=\1_d$. In addition, the spectrum of $\op B$ satisfies $\sigma(\op B)\subseteq\sigma(\op A)\cup\{0\}$.
\end{lemma}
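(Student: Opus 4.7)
The plan is to verify each of the three claims by a direct computation from the defining expression $\op B = \sum_{i,j=1}^d \op A_{ij}(e_i\otimes e_i)(e_j\otimes e_j)^T$.

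The first step will be to identify $\op B$ with a linear map $\tilde{\op B}:\hr\to\hr$ via the standard matrix--vector isomorphism $e_i\otimes e_j \leftrightarrow e_ie_j^T$. Because each covector $(e_j\otimes e_j)^T$ only pairs nontrivially with diagonal matrix units, I expect $\tilde{\op B}$ to annihilate every off-diagonal unit and to act on the diagonal via $\tilde{\op B}(e_ke_k^T) = \sum_i \op A_{ik}\, e_ie_i^T$. In other words, $\tilde{\op B}$ is a completely dephasing channel whose induced action on diagonals is exactly $\op A$.

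Plugging this action into the Choi formula of \prettyref{rem:choi} will then collapse the sum to its diagonal contributions, giving
\[
\map C_{\op B} \;=\; \sum_{i,k=1}^d \op A_{ik}\,(e_ke_k^T)\otimes(e_ie_i^T),
\]
a $d^2\times d^2$ diagonal matrix in the product basis $\{e_k\otimes e_i\}$ whose diagonal entries are precisely the entries of $\op A$. Claim~1 is then immediate: a diagonal matrix is positive (semi-)definite iff all of its entries are (strictly) (non)negative. For Claim~2, applying $\map{tr_2}$ to $\map C_{\op B}$ reduces it to a $d\times d$ diagonal matrix whose $k$-th entry is a single marginal sum of $\op A$; this equals $\1_d$ precisely when the row-sum normalization of \prettyref{def:stoch} holds.

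For the spectral inclusion I would bypass the Choi picture entirely and work directly with $\op B$ as a $d^2\times d^2$ matrix. Its range is contained in the $d$-dimensional subspace $V = \mathrm{span}\{e_k\otimes e_k\}_{k=1}^d$, and $V^\perp$ lies in its kernel. Restricted to $V$ and written in the basis $\{e_k\otimes e_k\}_{k=1}^d$, $\op B$ is by inspection identical to $\op A$. Hence the characteristic polynomial of $\op B$ factors as $t^{d^2-d}\,\chi_{\op A}(t)$, yielding $\sigma(\op B) \subseteq \sigma(\op A)\cup\{0\}$. No step requires deep machinery; the main obstacle is purely notational---one has to keep three viewpoints aligned (concrete $d^2\times d^2$ matrix, operator on $\sset C^d\otimes\sset C^d$, and induced superoperator $\tilde{\op B}$ on $\hr$) and in particular be careful with the convention for $\map{tr_2}$ so that the marginal appearing in Claim~2 is the row-sum of $\op A$ and not its transpose.
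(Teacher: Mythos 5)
Your proposal is correct and follows essentially the same route as the paper: both claims about positivity and normalization reduce to the observation that the Choi matrix of $\map{emb}\,\op A$ is diagonal with the entries of $\op A$ on its diagonal, followed by a direct computation of $\map{tr_2}$, and the spectral claim (which the paper simply calls trivial) is exactly your block-structure argument on $V=\mathrm{span}\{e_k\otimes e_k\}$, giving the characteristic polynomial $t^{d^2-d}\chi_{\op A}(t)$. The only caveat is the one you flag yourself: with the factor ordering of \prettyref{rem:choi}, the diagonal Choi matrix you display yields the \emph{column} sums of $\op A$ under $\map{tr_2}$, so to land literally on the row-sum condition of \prettyref{def:stoch} you must order the tensor factors as the paper does in its reshuffling identity (equivalently, trace out the first factor instead)---a pure convention choice, present in the paper as well, that affects nothing in the subsequent reduction.
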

\begin{proof}
The first claim follows directly from the matrix representation of our operators. There, the \knownth{Choi} isomorphism is manifest as the reshuffling operation or partial transpose
\[
\cdot^\Gamma:\sset K^{d^2\times d^2}\longrightarrow\sset K^{d^2\times d^2},
\left[(e_ie_j\transpose)\otimes(e_ie_j\transpose)\right]^\Gamma\longmapsto(e_ie_i\transpose)\otimes(e_je_j\transpose)\point
\]
For more details, see e.g. \cite{Bengtsson2006}.

The second statement follows from
\begin{align*}
\map{tr_2}(\map C_\op B)&=\map{tr_2}\left(\sum_{i,j=1}^d\op A_{ij}(e_ie_j\transpose)\otimes(e_ie_j\transpose)\right)\\
&=\sum_{i,j=1}^d\op A_{ij}e_ie_i\transpose=\map{diag}\left(\sum_{j=1}^d\op A_{1j},\ldots,\sum_{j=1}^d\op A_{dj}\right)\point
\end{align*}
The final claim is trivial.
\end{proof}

This remark immediately yields the following consequence.
\begin{corollary}\label{cor:cptpequiv1}
For a family of stochastic matrices $(\op Q_s)_{s\in S}$ parametrized by the index set $S$, there exists a family of square matrices $(\op A_s)_{s\in S}:=(\map{emb}\ \op Q_s)_{s\in\set S}$, such that $(\op Q_s)_{s\in\set S}$ contains a stochastic matrix if and only if $(\op A_s)_{s\in\set S}$ contains a \cptp matrix.
\end{corollary}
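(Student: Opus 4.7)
The plan is to apply \prettyref{lem:cptp} pointwise in the index $s$ and then existentially quantify. Specifically, for each fixed $s\in S$, \prettyref{lem:cptp} (applied with $\op A\leftarrow\op Q_s$ and $\op B\leftarrow\op A_s=\map{emb}\ \op Q_s$) gives two separate biconditionals: nonnegativity of $\op Q_s$ is equivalent to positive semi-definiteness of the Choi matrix $\map C_{\op A_s}$, and unit row sums of $\op Q_s$ are equivalent to $\map{tr_2}(\map C_{\op A_s})=\1_d$. By \prettyref{rem:choi} and the subsequent remark, the first condition is exactly complete positivity of $\op A_s$, and the second is exactly the trace-preservation property. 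Since ``stochastic'' is defined in \prettyref{def:stoch} as the conjunction of entry-wise nonnegativity and unit row sums, combining the two biconditionals yields, for every $s\in S$,
\[
\op Q_s \text{ is stochastic}\quad\Longleftrightarrow\quad \op A_s \text{ is \cptp}\point
\]

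Once this pointwise equivalence is established, the family-level statement is immediate: if some $s^\star\in S$ makes $\op Q_{s^\star}$ stochastic, then the same $s^\star$ makes $\op A_{s^\star}$ \cptp, and conversely. So existential quantification over $s$ on both sides preserves the biconditional, proving the corollary.

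The only thing that needs a brief sanity check is that the map $\map{emb}$ is well-defined on the whole family and commutes with forming roots in the sense needed later (this matters for how the corollary is used inside the proof of \prettyref{th:reduction}, where $(\op A_s)$ must enumerate the roots of a single matrix $\op B$). For the statement of the corollary itself, however, no such global structure is needed---the claim is purely about the pointwise application of \prettyref{lem:cptp} across the index set, followed by existential quantification. I therefore do not anticipate any genuine obstacle; the proof is essentially a one-line reduction to \prettyref{lem:cptp} plus \prettyref{rem:choi}.
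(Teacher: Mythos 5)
Your proposal is correct and matches the paper's approach: the paper states the corollary as an immediate consequence of \prettyref{lem:cptp} (together with \prettyref{rem:choi}), i.e.\ exactly the pointwise equivalence ``$\op Q_s$ stochastic $\Leftrightarrow$ $\map{emb}\,\op Q_s$ \cptp'' followed by existential quantification over $s$. Your closing remark correctly identifies that the compatibility of $\map{emb}$ with roots is only needed later, in \prettyref{th:reduction}, not for the corollary itself.
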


\subsection{Reduction of \textsc{Nonnegative Root} to \textsc{Stochastic Root}}\label{sec:root}
The difference between \textsc{Nonnegative Root} and \textsc{Stochastic Root} is the extra normalization condition in the latter, see \prettyref{def:stoch}.
The following two lemmas show that this normalization does not pose an issue, so we can efficiently reduce the problem \textsc{Nonnegative Root} to \textsc{Stochastic Root}.
\begin{lemma}\label{lem:nonnegequiv1}
For a family of square matrices $(\op M_s)_{s\in\set S}$ parametrized by the index set $\set S$, all of which with at least one positive entry, there exists a family of square matrices $(\op Q_s)_{s\in\set  S}$ such that $(\op M_s)_{s\in\set S}$ contains a nonnegative matrix if and only if $(\op Q_s)_{s\in\set S}$ contains a stochastic matrix and such that $\map{rank}\,\op Q_s=\map{rank}\,\op M_s+2\ \forall s\in\set S$. Furthermore, $(\op Q_s)_{s\in\set S}$ can be constructed efficiently from $(\op M_s)_{s\in\set S}$.
\end{lemma}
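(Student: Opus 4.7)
The plan is to exhibit an explicit, polynomial-time padding that sends a $d\times d$ candidate nonnegative root $\op M$ of $\op N$ to a $(d+2)\times(d+2)$ candidate stochastic root $\op Q$ of a single matrix $\op P$ that depends only on $\op N$. First I would compute a scaling constant $c>0$ large enough that the absolute value of every row sum of every root $\op M_s$ of $\op N$ is bounded by $c$; since $\op M_s^2=\op N$ implies $\rho(\op M_s)\le\sqrt{\|\op N\|}$, a bound of the form $c:=\sqrt{d\,\|\op N\|_\infty}+1$ suffices and is computable in polynomial time from $\op N$. I would then fix a stochastic $2\times 2$ matrix $K$ together with a distinguished left Perron vector $w\in\sset{R}_{\geq 0}^2$ satisfying $w^T K=w^T$ and $\mathbf 1^T w=1$, and define
\[
\op Q\;:=\;\begin{pmatrix} \op M/c & (\mathbf 1-\op M\mathbf 1/c)\,w^T \\ 0 & K\end{pmatrix}.
\]

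The verification then splits into routine and essential steps. By direct inspection every row of $\op Q$ sums to $1$; and by the choice of $c$ the slack block $\mathbf 1-\op M\mathbf 1/c$ is entry-wise nonnegative whenever $\op M$ is, so $\op Q$ is stochastic iff $\op M$ is nonnegative (this handles both directions of the biconditional, since if $\op Q$ is stochastic then its top-left block $\op M/c$ is automatically nonnegative). Using the block formula for the square and the identity $w^T K=w^T$, a short computation gives
\[
\op Q^2\;=\;\begin{pmatrix} \op N/c^2 & (\mathbf 1-\op N\mathbf 1/c^2)\,w^T \\ 0 & K^2\end{pmatrix}\;=:\;\op P,
\]
which depends only on $\op N$, $c$, $K$, $w$, so every member of our constructed family squares to the same $\op P$. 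The rank claim then follows from the block structure: the last two columns have the rank-$2$ block $K$ in their bottom $2\times 2$ entries and are linearly independent of the first $d$ columns (which vanish below row $d$), giving $\mathrm{rank}(\op Q)=\mathrm{rank}(\op M)+2$.

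The main obstacle, and the real content of the lemma, is to argue that the family $(\op Q_s)_{s\in\set S}$ so constructed is \emph{all} of $\sqrt{\op P}$, not just a subset. For this I would choose $K$ so that $K$ has distinct positive eigenvalues whose squares remain separated from the eigenvalues of $\op N/c^2$; this guarantees that $\op P$ is nonderogatory at the eigenvalues coming from the $K$-block and that the spectra of the two diagonal blocks are disjoint. Under these conditions, invoking the classification of roots in \cref{th:roots}, any root $\op Q'$ of $\op P$ must respect the block-triangular structure, so $\op Q'=\left(\begin{smallmatrix} A & B \\ 0 & D\end{smallmatrix}\right)$ with $A^2=\op N/c^2$ and $D^2=K^2$. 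The distinct-positive-eigenvalue choice for $K$ forces $D=K$ (up to a finite list of sign choices, all but $D=K$ ruled out by insisting on the principal branch, which we then absorb into the family), and the Sylvester-type equation $AB+BD=(\mathbf 1-\op N\mathbf 1/c^2)\,w^T$ has a unique solution $B$ for each $A$ thanks to the spectral separation. Hence $A=\op M/c$ for some $\op M\in\sqrt{\op N}$ and $B$ is exactly the slack block prescribed by our construction, so $\op Q'=\op Q(\op M)$ and the family indeed coincides with $\sqrt{\op P}$. The delicate part of the write-up will be organising the spectral conditions on $K$ and the scaling $c$ so that the disjointness and distinctness hypotheses of \cref{th:roots} hold uniformly across all inputs $\op N$ produced by the \sat reduction in \cref{sec:sat}.
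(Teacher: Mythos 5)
Your construction is genuinely different from the paper's: the paper triples the dimension, setting $\op Q_s=\frac ad AA\transpose\otimes\op M+\frac1d\left(BB\transpose+CC\transpose\right)\otimes\op 1$ with three orthogonal vectors $A,B,C$ chosen so that the row sums are identically $1$, nonnegativity of $\op Q_s$ is equivalent to that of $\op M_s$, and the rank increases by exactly $2$; you instead border $\op M/c$ with two extra rows and columns containing a slack column times a Perron covector and an invertible stochastic $2\times2$ block $K$. For the lemma as actually stated (the per-matrix equivalence, the rank count, and efficiency) your bordering works and is leaner, and your block computation of $\op Q^2$ even reproduces the common-square property that the paper defers to \prettyref{lem:nonnegequiv2}.

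There are, however, two concrete problems. First, your scaling constant is wrongly justified: the spectral radius of a root does not control its row sums, since for non-normal matrices $\rho(\op M)\le\sqrt{\rho(\op N)}$ says nothing about $\|\op M\|_\infty$ --- e.g.\ $\op M=\left(\begin{smallmatrix}\varepsilon & T/(2\varepsilon)\\ 0 & \varepsilon\end{smallmatrix}\right)$ has $\op M^2$ of bounded entries while its first row sum is arbitrarily large. If $c$ fails to dominate the row sums of some nonnegative $\op M_s$, the slack block $\mathbf 1-\op M_s\mathbf 1/c$ acquires negative entries and the direction ``$\op M_s$ nonnegative $\Rightarrow\op Q_s$ stochastic'' breaks, i.e.\ the ``if'' of the biconditional fails. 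The fix is simply $c:=1+\max_s\|\op M_s\|_\infty$, computed from the input family (the lemma only requires the construction to be efficient in the family, not a function of $\op N$ alone). Second, the part of your argument aimed at the downstream use (that the $\op Q$-family exhausts $\sqrt{\op P}$) cannot dismiss deviant branches ``by insisting on the principal branch'': a \textsc{Stochastic Root} instance must comprise \emph{all} roots of $\op P$, and for your symmetric $K$ the alternative root $P_+-\tfrac12P_-$ of $K^2$ is itself stochastic, so the complement of your family genuinely contains stochastic matrices. Your reduction can still be rescued, but only by the observation that any stochastic block-triangular root forces its top-left block to be a nonnegative root of $\op N/c^2$, and block-triangularity of \emph{every} root in turn needs the non-degenerate, non-derogatory setting the paper arranges separately in \prettyref{sec:sing}; this is exactly the content the paper isolates in \prettyref{lem:nonnegequiv2} and handles with its purpose-built sign structure.
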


\begin{proof}
We explicitly construct our family $(\op Q_s)_{s\in\set S}$ as follows.
Pick an $s\in\set S$ and denote $\op M:=\op M_s$. Let $d$ be the dimension of $\op M$. We first pick $a\in\sset R^+$ such that $a\max_{ij}\op M_{ij}=1/2$\footnote{The exact bound is $43/81$.} and define
\begin{align*}
\op Q_s:\!&=\frac1{1764d}\begin{pmatrix}
 1764 a\op M+637 & 735-1260 a\op M & 392-504 a\op M \\
 735-1260 a\op M & 900 a\op M+1029 & 360 a\op M \\
 392-504 a\op M & 360 a\op M & 144 a\op M+1372
\end{pmatrix}\\
&\equiv\frac{a}{d}AA\transpose\otimes\op M+\frac1d\left(BB\transpose+CC\transpose\right)\otimes\op 1\comma
\end{align*}
where by sum of matrix $\op M$ and scalar $x$ we mean $\op M+x\op 1$, $\op 1:=(1)_{1\le i,j\le d}\in\sset R^{d\times d}$, and
\[
A:=\left(1,-\frac57,-\frac27\right)\transpose\comma\quad
B:=\left(\frac16,\frac12,-\frac23\right)\transpose\comma\quad
C:=-\frac1{\sqrt3}\left(1,1,1\right)\transpose\point
\]
Observe that $\{A,B,C\}$ form an orthogonal set---if one wishes, normalizing and pulling out the constant as eigenvalue to the corresponding eigenprojectors would work equally well.

By construction, $\op Q_s$ is nonnegative if and only if $\op M_s$ is. Since the row sums of $\op Q_s$ are always $1$, $\op Q_s$ is stochastic if and only if $\op M_s$ is nonnegative, and the claim follows.
\end{proof}

\begin{lemma}\label{lem:nonnegequiv2}
Let the notation be as in \prettyref{lem:nonnegequiv1} and write $\sqrt{\op N}$ for the set of roots of $\op N$, see \prettyref{def:roots}. Assume $(\op M_s)_{s\in\set S}=\sqrt{\op N}$ for some $\op N\in\sset C^{d\times d}$. Then there exists a $\op P\in\sset C^{d\times d}$, such that $\op Q_s^2=\op P\ \forall s\in S$ and $(\op Q_s)_{s\in\set S}\subset\sqrt{\op P}$. Furthermore, the complement of $(\op Q_s)_{s\in\set S}$ in $\sqrt{(\op P}$ does not contain any stochastic roots.
\end{lemma}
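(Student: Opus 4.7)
The plan is first to identify the candidate $\op P$ by directly computing $\op Q_s^2$, and then to classify the elements of $\sqrt{\op P}$ and argue that stochasticity confines the stochastic roots to the family $(\op Q_s)_{s\in S}$.

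For the first part I would expand $\op Q_s^2$ using the mutual orthogonality $A^T B=A^T C=B^T C=0$, which kills every cross term between the $AA^T$, $BB^T$, and $CC^T$ pieces. The surviving diagonal contributions simplify via $(XX^T)^2=\|X\|^2 XX^T$ and $\op 1^2=d\op 1$, together with the hypothesis $\op M_s^2=\op N$. This yields
\begin{equation*}
\op P := \op Q_s^2 = \frac{a^2\|A\|^2}{d^2}\,AA^T\otimes\op N\;+\;\frac{1}{d}\bigl(\|B\|^2\,BB^T+\|C\|^2\,CC^T\bigr)\otimes\op 1,
\end{equation*}
which depends on $s$ only through $\op N=\op M_s^2$; hence $\op P$ is well defined and $(\op Q_s)_{s\in S}\subset\sqrt{\op P}$ is tautological.

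For the second part I would use that any root $\op R\in\sqrt{\op P}$ commutes with $\op P$. In the orthonormal basis $\hat A,\hat B,\hat C$ of $\sset R^3$ the operator $\op P$ is block-diagonal, with blocks proportional to $\op N$ on $\hat A\otimes\sset R^d$ and to $\op 1$ on the other two sectors. By rescaling $a$ downward if needed the three blocks have pairwise disjoint nonzero spectra, so $\op R$ must respect this decomposition on the range of $\op P$: in the $\hat A$-sector $\op R$ restricts (up to scale) to an element of $\sqrt{\op N}=(\op M_s)_{s\in S}$, while in the $\hat B$- and $\hat C$-sectors the only primary roots of $c\op 1$ are $\pm\sqrt{c/d}\,\op 1$, supplemented by nilpotent perturbations supported in the $(d-1)$-dimensional kernel $\mathbf 1_d^\perp$ of $\op 1$. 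The row-sum condition $\op R\mathbf e_{3d}=\mathbf e_{3d}$, combined with $\mathbf e_{3d}\propto\hat C\otimes\mathbf 1_d$, then forces $R_C\mathbf 1_d=\mathbf 1_d$, killing both the negative sign in the $\hat C$-sector and any nilpotent piece that acts non-trivially on $\mathbf 1_d$.

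In the $\hat B$-sector, flipping the sign produces strictly negative entries in the block whose constant coefficient $\frac{1}{d}[\pm(BB^T)_{ik}+(CC^T)_{ik}]$ becomes negative---for instance in the $(3,3)$ block, where $-(BB^T)_{33}+(CC^T)_{33}=-4/9+1/3<0$---and this cannot be compensated by the $O(a)$ piece $a\op M$ because $a$ was chosen small in \prettyref{lem:nonnegequiv1}. The main obstacle I expect is ruling out the remaining non-primary nilpotent perturbations in the $\hat B$- and $\hat C$-sectors: any nonzero such $N$ has both positive and negative entries (its rows and columns sum to zero), and when $\op R$ is re-assembled in the original basis these signed entries land in all blocks, including the off-diagonal ones where the canonical constant term of $\op Q_s$ vanishes identically---such as $(2,3)$, where $(BB^T+CC^T)_{23}=0$. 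In those blocks the only available compensation is $\frac{a(AA^T)_{ik}}{d}\op M$, which is too small to absorb a nonzero $N$, forcing $N=0$ entrywise. A block-by-block check against the explicit matrix form in \prettyref{lem:nonnegequiv1} then closes the argument.
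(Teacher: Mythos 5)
Your identification of $\op P$ and your treatment of the sign choices coincide with the paper's proof: orthogonality of $A,B,C$ kills the cross terms, so $\op Q_s^2$ depends on $s$ only through $\op M_s^2=\op N$; the row-sum condition forces the $+$ sign on the $\hat C$-sector; and the entrywise check in the $(3,3)$ block (where $-(BB\transpose)_{33}+(CC\transpose)_{33}=-1/9$ dominates the $|a(AA\transpose)_{33}\op M|\le 2/49$ contribution) rules out the flipped $\hat B$-sign---this is exactly the ``easy to check'' step the paper leaves implicit. (Minor caveat: $a$ is already fixed by \prettyref{lem:nonnegequiv1}, so you should not re-tune it here; but spectral collisions would only enlarge $\sqrt{\op P}$, so this is not the real issue.)

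The genuine gap is in your final step, the elimination of the non-primary roots supported on $\ker\op P$---the one point where you try to do more than the paper, which simply asserts that all roots have the tensor-product sign-choice form. Your argument is that $\frac ad(AA\transpose)_{ij}\op M$ is ``too small to absorb a nonzero $N$'', but the quantifiers run the wrong way: $N$ is not fixed, it ranges over a continuum containing arbitrarily small elements. Concretely, take $N=\varepsilon\,\hat B\hat C\transpose\otimes vu\transpose$ with $u,v\perp\mathbf 1_d$. Since $\hat A,\hat B,\hat C$ are mutually orthogonal and $\op 1v=0$, $u\transpose\op 1=0$, one checks $\op Q_sN=N\op Q_s=N^2=0$, so $(\op Q_s+N)^2=\op P$ for every $\varepsilon$, and $N(\mathbf 1_3\otimes\mathbf 1_d)=0$, so the row sums are untouched. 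If $\op M_s$ is nonnegative with strictly positive entries, then every entry of $\op Q_s$ is strictly positive (the only entries of $\op Q_s$ that can vanish are the $(2,3)/(3,2)$-block entries $\propto\op M_{s,kl}$), so for $\varepsilon$ small enough $\op Q_s+N$ is stochastic, squares to $\op P$, and is not of the form $\op Q_t$ (its $(2,3)$ and $(3,2)$ blocks differ, because $\hat B\hat C\transpose$ is not symmetric). Hence no entrywise-smallness argument can force $N=0$; the statement you are trying to prove entrywise is in fact stronger than what holds, and closing the lemma requires a different idea---e.g.\ removing the degeneracy as in \prettyref{sec:sing}, or arguing directly that the $\hat A$-sector compression of any stochastic root of $\op P$ yields a nonnegative element of $\sqrt{\op N}$, which is all the downstream reduction actually uses.
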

\begin{proof}
The first statement is obvious, since for all $s\in\set S$,
\begin{align*}
\op Q_s^2&=\frac{a^2}{d^2}\frac{78}{49}AA\transpose\otimes\op M_s^2+\frac1d\left(\frac{13}{18}BB\transpose+CC\transpose\right)\otimes\op 1=:\op P\comma
\end{align*}
and hence clearly $(\op Q_s)_{s\in\set S}\subset\sqrt{\op P}$.

The last statement is not quite as straightforward---it is the main reason our carefully crafted matrix $\op Q_s$ has its slightly unusual shape. All possible roots of $\op P$ are of the form
\[
\sqrt{\op P}=
\frac adAA\transpose\otimes\sqrt{\op N}\pm\frac1d\left(BB\transpose\pm CC\transpose\right)\otimes\op 1\point
\]
It is easy to check that none of the other sign choices yields any stochastic matrix, so the claim follows.\footnote{The reader may try to find a simpler matrix that does the trick.}
%\marginpar{\footnotesize I could include the proof, i.e. explicitly write down all the four matrices, but I think that's not very interesting.}.
\end{proof}

\begin{corollary}\label{cor:doubly}
The results of \prettyref{lem:nonnegequiv1} and \ref{lem:nonnegequiv2} also hold for doubly stochastic matrices---observe that our construction of $\op Q_s$ is already doubly stochastic.
\end{corollary}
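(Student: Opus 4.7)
The corollary only asks us to verify that the construction of Lemma~\ref{lem:nonnegequiv1} already outputs doubly stochastic matrices, and that the doubly stochastic analogue of Lemma~\ref{lem:nonnegequiv2} then follows for free. My plan is to observe this by looking at the column sums of $\op Q_s$ directly, since the row sums have already been established.

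The key structural ingredients I would exploit are the chosen vectors $A$, $B$, $C$. A direct computation gives $\sum_i A_i = 1 - 5/7 - 2/7 = 0$ and $\sum_i B_i = 1/6 + 1/2 - 2/3 = 0$, while $C = -\mathbf{1}_3/\sqrt 3$ is proportional to the all-ones vector. Writing
\[
(\op Q_s)_{(i,k),(j,l)} = \frac{a}{d}A_iA_j\op M_{kl} + \frac{1}{d}\bigl(B_iB_j + C_iC_j\bigr),
\]
I would sum over the row index $(i,k)$ and compute
\[
\sum_{i,k}(\op Q_s)_{(i,k),(j,l)} = \frac{a}{d}\Bigl(\sum_iA_i\Bigr)A_j\Bigl(\sum_k\op M_{kl}\Bigr) + B_j\Bigl(\sum_iB_i\Bigr) + C_j\Bigl(\sum_iC_i\Bigr).
\]
The first two terms vanish because $A$ and $B$ each sum to zero, and the third evaluates to $(-1/\sqrt 3)\cdot(-\sqrt 3) = 1$ independently of $j$, $l$, or the matrix $\op M_s$. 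Notably, the $\op M$-dependent contribution cancels via $\sum_iA_i = 0$, so there is no need for $\op M_s$ to have any symmetry or row/column sum property. An identical computation with the roles of rows and columns exchanged confirms that the row sums are also $1$, so $\op Q_s$ is doubly stochastic.

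For the analogue of Lemma~\ref{lem:nonnegequiv2}, I would simply note that its proof only used the explicit tensor decomposition of $\op Q_s^2$ and the fact that alternative sign choices in the root formula do not yield nonnegative, row-stochastic matrices. Since doubly stochastic is a strictly stronger condition than stochastic, and the root structure of $\op P$ is unchanged, the same sign analysis shows that no alternative root of $\op P$ can be doubly stochastic either. Consequently, $(\op Q_s)_{s\in\set S}$ contains a doubly stochastic matrix iff $(\op M_s)_{s\in\set S}$ contains a nonnegative one.

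The only potential obstacle I anticipate is making sure the column-sum cancellation is truly independent of $\op M_s$ and not an artefact of some hidden symmetry; the plan above rules this out by showing the $\op M$-term is multiplied by $\sum_iA_i=0$, so the argument is robust. Hence the full reduction chain in \prettyref{fig:reduction} extends to the doubly stochastic branch with no modification beyond the observation above.
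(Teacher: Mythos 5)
Your proposal is correct and takes essentially the same route as the paper: the paper's entire justification for \prettyref{cor:doubly} is the parenthetical observation that the constructed $\op Q_s$ is already doubly stochastic, which your column-sum computation (using $\sum_i A_i=\sum_i B_i=0$ and $C$ proportional to the all-ones vector, so the $\op M_s$-dependent term drops out) simply makes explicit. Your handling of the \prettyref{lem:nonnegequiv2} analogue---no other root of $\op P$ is stochastic, hence a fortiori none is doubly stochastic---is exactly the intended argument.
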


\subsection{Reduction of \sat to \textsc{Nonnegative Root}}\label{sec:sat}
We now embed an instance of a boolean satisfiability problem, \sat---see \prettyref{def:sat} for details---into a family of matrices $(\op M_s)_{s\in\set S}$ in a way that there exists an $s$ such that $\op M_s$ is nonnegative if and only if the instance of \sat is satisfiable. The construction is inspired by the one in \cite{Cubitt2012a}.

We identify
\begin{equation}\label{eq:bool}
{\textsc{true}}\longleftrightarrow1\comma\quad\mathrm{\textsc{false}}\longleftrightarrow-1\point
\end{equation}
Denote with $(m_{i1},m_{i2},m_{i3})\in\{\pm1\}^3$ the three boolean variables occurring in the $i$\th boolean clause, and let $m_i\in\{\pm1\}$ stand for the single $i$\th boolean variable.
Then \sat translates to the inequalities
\begin{equation}\label{eq:3sat2}
-\frac32\le m_{i1}+m_{i2}+m_{i3}\le-\frac12\quad\forall i=1,\ldots,n_c\point
\end{equation}

\begin{theorem}\label{th:satnonneg}
Let $(n_v,n_c,m_i,m_{ij})$ be a \sat instance. Then there exists a family of matrices $(\op M_s)_{s\in\set S}$ such that $\exists s:\op M_s$ nonnegative iff the instance is satisfiable.
\end{theorem}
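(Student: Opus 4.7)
The plan is to encode the \sat instance directly into the spectral data of a matrix $\op N$, so that its square roots are in bijection with sign assignments on its eigenvalues. Concretely, I would give $\op N$ exactly $n_v+1$ pairwise distinct nonzero eigenvalues $\nu_0^2,\nu_1^2,\ldots,\nu_{n_v}^2$ (embedded, if necessary, into a larger ambient dimension $d$ with a sufficiently simple nilpotent part, so that the hypotheses of \prettyref{th:roots} apply). Every root is then of the form
\[
\op M_s=\sum_{j=0}^{n_v}s_j\,\nu_j\,v_jw_j\transpose,\qquad s=(s_0,\ldots,s_{n_v})\in\{\pm1\}^{n_v+1},
\]
where $v_j,w_j$ are biorthogonal right/left eigenvectors, $w_j\transpose v_k=\delta_{jk}$. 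Biorthogonality forces $\op M_s^2=\sum_j\nu_j^2\,v_jw_j\transpose$, which is manifestly $s$-independent and so equals $\op N$ for every sign pattern. Via Eq.~\ref{eq:bool}, I identify $s_j$ (for $j\ge1$) with the truth value of the boolean variable $m_j$, while $s_0$ is reserved as an auxiliary sign that will be pinned down by the nonnegativity constraints.

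The core of the construction is to choose the eigenvectors so that, for each clause $i\in\{1,\ldots,n_c\}$ with variables $m_{i1},m_{i2},m_{i3}=m_{c_1(i)},m_{c_2(i)},m_{c_3(i)}$, two distinguished matrix entries of $\op M_s$ read
\begin{align*}
(\op M_s)_{r_i,a_i}&=2s_0+s_{c_1(i)}+s_{c_2(i)}+s_{c_3(i)},\\
(\op M_s)_{r_i,b_i}&=-\bigl(s_{c_1(i)}+s_{c_2(i)}+s_{c_3(i)}\bigr),
\end{align*}
while all remaining entries vanish identically in $s$. These are explicit linear conditions on the products $\nu_j v_j(r_i)w_j(a_i)$ and $\nu_j v_j(r_i)w_j(b_i)$ for each $j$, and can be met by allocating pairwise disjoint row/column indices $(r_i,a_i),(r_i,b_i)$ across clauses and inflating the ambient dimension to $d=O(n_v+n_c)$, so that enough free components of the $v_j,w_j$ remain to also satisfy the biorthogonality relations. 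A direct case check over $(s_0,s_{c_1(i)}+s_{c_2(i)}+s_{c_3(i)})\in\{\pm1\}\times\{-3,-1,1,3\}$ shows that both entries are simultaneously $\ge 0$ iff $s_0=+1$ and $s_{c_1(i)}+s_{c_2(i)}+s_{c_3(i)}=-1$, which is precisely the \sat constraint Eq.~\ref{eq:3sat2}. Hence a nonneg root $\op M_s$ exists iff some assignment $(s_1,\ldots,s_{n_v})$ satisfies every clause of the instance.

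The main obstacle I anticipate is not the per-clause analysis but the simultaneous satisfiability of the eigenvector equations when a boolean variable appears in several clauses: the same pair $(v_j,w_j)$ must then realise prescribed outer-product values at several different coordinate positions while jointly satisfying $w_j\transpose v_k=\delta_{jk}$ against every other eigenvector. I would tackle this by an explicit sparse ansatz, supporting each $v_j,w_j$ on a tailored block of coordinates containing exactly the positions it has to control, and absorbing the biorthogonality into the unused components left by the padding to dimension $d$. A secondary, more delicate issue is ensuring that the constructed family $(\op M_s)_{s\in\set S}$ truly comprises \emph{all} of $\sqrt{\op N}$ in the sense of \prettyref{def:nonnegprob}: this follows from \prettyref{th:roots} once $\op N$ has $n_v+1$ distinct nonzero eigenvalues and its nilpotent part satisfies the technical condition on the sequence $d_i=\dim\ker\op N^i-\dim\ker\op N^{i-1}$, both of which can be enforced by the choice of the $\nu_j$ and the ambient padding.
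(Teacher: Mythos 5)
Your high-level strategy is the same as the paper's: encode truth values as the $\pm$ sign choices of eigenvalues of a root, arrange designated matrix entries to be linear forms in these signs whose nonnegativity is exactly the 1-in-3 constraint of Eq.~\ref{eq:3sat2}, pin an auxiliary sign, and use the root classification (\prettyref{th:roots}, distinct nonzero eigenvalues, controlled nilpotent part) to argue the family exhausts $\sqrt{\op N}$. The per-clause case check over $(s_0,\Sigma)$ is correct. However, the step you yourself flag as the ``main obstacle'' is not a technicality you can defer --- it is exactly where your construction breaks, and the paper's proof is mostly devoted to it. If variable $j$ occurs in clauses $i$ and $i'$, then the single rank-one projector $v_jw_j\transpose$ must be nonzero at $(r_i,a_i)$ and at $(r_{i'},a_{i'})$, hence $v_j(r_i)\neq0$ and $w_j(a_{i'})\neq0$, so the cross position $(r_i,a_{i'})$ acquires an $s_j$-dependent entry: the support of a rank-one outer product is a combinatorial rectangle, so you cannot make ``all remaining entries vanish identically in $s$''. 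Your proposed fix --- a sparse ansatz with pairwise disjoint blocks per clause --- does not escape this: either the variable keeps a single eigenprojector (and the rectangle problem returns), or you give each variable--clause occurrence its own eigenvalue, in which case the sign choices for the same variable in different clauses become independent and the roots no longer correspond to consistent truth assignments, destroying the reduction.

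The paper resolves precisely this by \emph{not} demanding that the unwanted entries vanish. The coding matrix $\op C_s$ of \prettyref{lem:techC} deliberately lets each variable's projector (supported on all clauses containing it) spill into off-diagonal blocks, and then two additional sign-pinned masks are added: $\op E$ (\prettyref{lem:techE}) drowns all unwanted entries of the upper-left block in a large positive offset whose alternative root signs are themselves ruled out by negativity elsewhere, and $\op D$ (\prettyref{lem:techD}) does the same for the orthonormalization region; finally \prettyref{lem:sing} lifts the zero-eigenvalue degeneracy introduced by the padding so that the matrix is non-derogatory and the sign-choice family really is all of $\sqrt{\op N}$ (your invocation of \prettyref{th:roots} likewise needs this, since your padding to dimension $d=O(n_v+n_c)$ creates a highly degenerate kernel). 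Without a mechanism of this kind --- masking the $s$-dependent cross-clause entries while keeping the masks' own root ambiguities harmless --- your construction does not go through, so as it stands the proposal has a genuine gap at its central step.
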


To prove this, we first need the following technical lemma.
\begin{lemma}\label{lem:techC}
Let $(n_v,n_c,m_i,m_{ij})$ be a \sat instance. Then there exists a family of matrices $(\op C_s)_{s\in\set S}$ such that $\exists s:$ the first $n_c$ on-diagonal $4\times4$ blocks of $\op C_s$ are nonnegative iff the instance is satisfiable. In addition, we have $\op C_s^2=\op C_t^2\ \forall s,t$. Furthermore, $(\op C_s)_{s\in\set S}\subset\sqrt{\op C_s^2}$, and the complement contains no nonnegative root.
\end{lemma}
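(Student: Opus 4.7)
The plan is to construct the family $(\op C_s)_{s\in\set S}$ explicitly, indexed by sign vectors $\set S := \{\pm 1\}^{n_v}$, where each coordinate $s_j$ encodes the truth value of the boolean variable $m_j$ under the identification in \prettyref{eq:bool}. Following the template already used in \prettyref{lem:nonnegequiv1}, I would fix a collection of mutually orthogonal projectors $\{\op P_j\}_{j=1}^{n_v}$ on a common background $\op B$ commuting with each $\op P_j$, and set
\[
\op C_s := \op B + \sum_{j=1}^{n_v} s_j\, \alpha_j\, \op P_j\point
\]
By orthogonality the mixed terms drop out of the square, and $[\op B,\op P_j]=0$ ensures $\op C_s^2 = \op B^2 + \sum_j \alpha_j^2 \op P_j$ is manifestly independent of $s$. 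This immediately yields $\op C_s^2=\op C_t^2$ for all $s,t$, and in particular $(\op C_s)_{s\in\set S}\subset\sqrt{\op C_s^2}$.

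Next I would design the block layout. Arrange the ambient space as a direct sum $\bigoplus_{i=1}^{n_c} \sset C^4 \oplus \sset C^r$, where the first $n_c$ summands correspond to the clause blocks, and choose the $\op P_j$ so that projector $\op P_j$ acts nontrivially on block $i$ exactly when variable $m_j$ appears in clause $i$. Tuning $\op B$ so each $4\times 4$ clause block has a constant baseline entry $\beta>0$, and tuning $\alpha_j$ so that $\op P_j$ contributes an entry $\gamma\cdot s_{ij}$ into block $i$ whenever $m_j = m_{ij}$, the block becomes an affine function of $s_{i1}+s_{i2}+s_{i3}$. Picking $\beta$ and $\gamma$ so that nonnegativity of every entry of block $i$ is equivalent to $-3/2 \le s_{i1}+s_{i2}+s_{i3}\le -1/2$, one recovers exactly the inequalities \prettyref{eq:3sat2}. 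Hence $\op C_s$ has nonnegative first $n_c$ diagonal blocks iff $s$ encodes a satisfying assignment.

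The delicate part is the final claim that \emph{no} root of $\op C_s^2$ outside the family $(\op C_s)_{s\in\set S}$ is nonnegative on the clause blocks. Here I would invoke \prettyref{th:roots}: choosing the eigenvalues $\alpha_j^2$ of $\op C_s^2$ pairwise distinct on the subspace supporting the clause blocks guarantees non-derogacy there, so every root is primary and equals $\sum_j \epsilon_j \alpha_j \op P_j + \op B$ for some $\epsilon\in\{\pm 1\}^{n_v}$. Thus the primary roots are exactly the $\op C_s$, $s\in\set S$. Any remaining nonprimary roots must come from the degenerate eigenspaces of $\op C_s^2$ outside the clause subspace, and a computation analogous to \prettyref{lem:nonnegequiv2} shows that the additional freedom $\op U$ that commutes with the Jordan form injects off-diagonal entries into the clause blocks which spoil nonnegativity regardless of the sign choice.

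The main obstacle I anticipate is the simultaneous constraint satisfaction in the construction: the projectors $\op P_j$ must (i) localize their action so that each clause block depends only on the three signs $s_{i1}, s_{i2}, s_{i3}$ and not on other variables, (ii) be mutually orthogonal so the squares decouple, and (iii) leave $\op C_s^2$ non-derogatory on the clause subspace so only primary roots survive. Balancing (i)--(iii) is the place where the explicit numerical coefficients---analogous to the carefully tuned $A,B,C$ vectors of \prettyref{lem:nonnegequiv1}---must be pinned down, and the resulting $\op C_s$ will inherit a similarly bespoke numerical form.
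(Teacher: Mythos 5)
Your overall strategy is the same as the paper's---index the family by sign choices encoding the truth assignment, make all eigenvectors mutually orthogonal (via an orthonormalization region) so that every member squares to the same matrix, and encode the 1-in-3 constraints of \prettyref{eq:3sat2} as entry-wise nonnegativity of the clause blocks---but the execution has concrete gaps. First, an algebra error: with $\op C_s=\op B+\sum_j s_j\alpha_j\op P_j$, commutation $[\op B,\op P_j]=0$ does \emph{not} make the square $s$-independent; the cross terms $2s_j\alpha_j\op B\op P_j$ survive. You need $\op B\op P_j=0$, i.e.\ the range of the background must be orthogonal to every $v_j$---this is precisely what the paper's orthonormalization region $\vec o$ enforces for $c_1,c_2$ and the $v_k$ in \prettyref{eq:cs}. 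Second, the clause gadget as you describe it cannot work: if each block is a single baseline $\beta$ plus $\gamma\,(s_{i1}+s_{i2}+s_{i3})$ with one sign of $\gamma$, entry-wise nonnegativity yields only a one-sided bound, which can never be equivalent to the two-sided condition $-\tfrac32\le s_{i1}+s_{i2}+s_{i3}\le-\tfrac12$. You need the variable sum to enter two different entries with \emph{opposite} signs and two \emph{different} offsets ($\tfrac32$ and $\tfrac12$); the paper achieves this by tensoring $v_kv_k\transpose$ with the antisymmetric block $\bigl(\begin{smallmatrix}0&1\\-1&0\end{smallmatrix}\bigr)$ and adding two distinct constant terms, so the block reads $\bigl(\begin{smallmatrix}1&3/2+P\\-1/2-P&1\end{smallmatrix}\bigr)$ with $P=p_{i1}+p_{i2}+p_{i3}$. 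This mechanism (or an equivalent one with projector restrictions having entries of both signs) is the heart of the lemma and is missing from your sketch.

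The most serious gap is in the final claim. Your assertion that ``every root is primary and equals $\op B+\sum_j\epsilon_j\alpha_j\op P_j$'' is false: the primary roots of $\op C_s^2$ also include independent sign flips on the spectral projectors of the \emph{background} (e.g.\ $-\op B+\sum_j\epsilon_j\alpha_j\op P_j$, and partial flips). These flipped-background roots are exactly the ``complement'' that the lemma must rule out, and it is not automatic that they fail nonnegativity---if one of them were nonnegative irrespective of satisfiability, the reduction would produce false positives. The paper devotes a separate case check to this (\prettyref{fig:proof2x2}): every sign choice other than the intended one for the $c_1,c_2$ eigenvalues produces a negative or complex entry in every clause block, and the antisymmetric tensor factor forces the two eigenvalues of each $v_kv_k\transpose$ term to carry opposite signs (otherwise the root is complex), so the only residual freedom is the boolean assignment. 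You must supply the analogue of that check. Finally, your dismissal of nonprimary roots ``by a computation analogous to \prettyref{lem:nonnegequiv2}'' is unsubstantiated; the paper instead makes the nonzero eigenvalues pairwise distinct via the rescaling \prettyref{eq:booldistinct} and defers the degeneracy coming from the large kernel to the later perturbation step (\prettyref{lem:sing}), rather than resolving it inside this lemma.
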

\begin{proof}
For every boolean variable $m_k$, define a vector $v_k\in\sset R^d$ such that their first $n_c$ elements are defined via
\[(v_k)_i:=\begin{cases}1&\text{$m_k$ occurs in $i$\th clause}\\
0&\text{otherwise}\point
\end{cases}
\] We will specify the dimension $d$ later---obviously $d\ge n_c$, and the free entries are used to orthonormalize all vectors in the end. For now, we denote the orthonormalization region with $\vec o$.
We further define the vectors $c_1,c_2\in\sset R^d$ to have all $1$s in the first $n_c$ entries, i.e. $c_{1,2}=(1,\ldots,1,\vec o_{1,2})$.
Let then
\begin{align}
\op C'_s:=c_1c_1\transpose\otimes\begin{pmatrix}
1 & 1 \\ -1 & 1
\end{pmatrix}&+
\frac12c_2c_2\transpose\otimes\begin{pmatrix}
0 & 1 \\ 1 & 0
\end{pmatrix}
+\sum_{k=1}^{n_v} p_kv_kv_k\transpose\otimes\begin{pmatrix}
0 & 1 \\ -1 & 0
\end{pmatrix}\label{eq:cs}\point
\end{align}
The variables $p_k$ denote a specific choice of the rescaled boolean variables $m_i$, which---in order to avoid degeneracy---have to be distinct, i.e via
\begin{equation}\label{eq:booldistinct}
p_i=\left(1-\frac1N-\frac{i}{Nn_v}\right)m_i\quad\forall i=1,\ldots,n_v\point
\end{equation}
The $p_{ij}$ are defined accordingly from the $m_{ij}$ and $N\in\sset N$ is large but fixed.

Let further
\[\op C_s:=\begin{pmatrix}
\op C'_s & 0 \\ 0 & 0
\end{pmatrix}\in\sset C^{d\times d}\comma\]
where we have used an obvious block notation to pad $\op C'_s$ with zeroes, which will come into play later.

The on-diagonal $2\times2$ blocks of $\op C_s$ then encode the \sat inequalities from \prettyref{eq:3sat2}---demanding nonnegativity---as the set of equations
\[
\frac32+p_{i1}+p_{i2}+p_{i3}\ge0
\quad\text{and}\quad
-\frac12-p_{i1}-p_{i2}-p_{i3}\ge0\point
\]
Note that we leave enough head space such that the rescaling in \prettyref{eq:booldistinct} does not affect any of the inequalities---see \prettyref{sec:sing} for details.

Observe further that the eigenvalues corresponding to each eigenprojector in the last term of \prettyref{eq:cs} necessarily have opposite sign, otherwise we create complex entries. We will later rescale $\op C_s$ by a positive factor, which clearly does not affect the inequalities, so the first claim follows.

We can always orthonormalize the vectors ${c_{1,2}}$ and ${v_k}$ using  the freedom left in $\vec o$, hence we can achieve that $\op C_s^2=\op C_t^2\ \forall s,t$.
It is straightforward to check that no other sign choice for the eigenvalues of the first two terms yields nonnegative blocks---see \prettyref{fig:proof2x2} for details. From this, the last two claims follow.
\end{proof}

\begin{figure}[t]
\centering
\includegraphics[width=\textwidth]{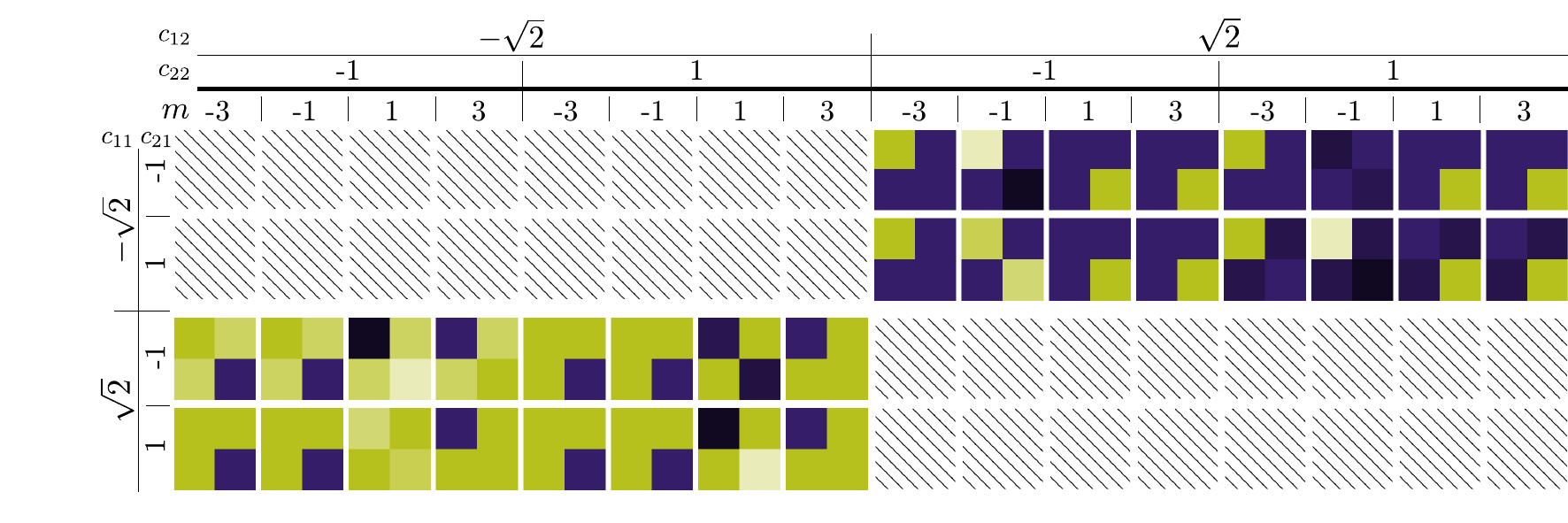}
\caption{$\op C'_s$ for various sign choices of the eigenvalues $c_{ij}$, $i,j=1,2$ corresponding to the eigenvectors ${c_{1,2}}$. Only all positive signs and $m:=\sum_jm_{ij}=-1$ yields a nonnegative block (third from right in top row). Hatching signifies complex numbers, the color scale is the same as in \prettyref{fig:unsatex}.}
\label{fig:proof2x2}
\end{figure}

\subsection{Orthonormalization and Handling the Unwanted Inequalities}
As in \cite{Cubitt2012a}, we have unwanted inequalities---the off-diagonal blocks in the first $4n_c$ entries and the blocks involving the orthonormalization region $\vec o$. We first deal with the off-diagonal blocks in favour of enlarging the orthonormalization region, creating more---potentially negative---entries in there, and then fix the latter.

\paragraph{Off-Diagonal Blocks}
We begin with the following lemma.
\begin{lemma}\label{lem:techE}
Let the family $(\op C_s)_{s\in\set S}$ be defined as in the proof of \prettyref{lem:techC}, and $(n_v,n_c,m_i,m_{ij})$ the corresponding \sat instance. Then there exists a matrix $\op E\in\sset C^{d\times d}$ such that the top left $4n_c\times4n_c$ block of $\op C_s+\op E$ has at least one negative entry $\forall s$ iff the instance is not satisfiable. Furthermore, $\map{im}\op C_s\perp\map{im}\op E\ \forall s$, and $\op C_s+\op E'$ has negative entries $\forall s$, $\forall \op E'\in\sqrt{\op E^2}\setminus\{\op E\}$.
\end{lemma}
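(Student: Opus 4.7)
My plan is to construct $\op E$ so that, once added to $\op C_s$, every off-diagonal $2\times 2$ block (between distinct clauses) in the top-left $4n_c\times 4n_c$ region is made strictly positive independently of $s$, while every on-diagonal block is left unchanged. Since \prettyref{lem:techC} already shows that the on-diagonal blocks are collectively nonnegative for some $s$ iff the \sat instance is satisfiable, this is exactly what the equivalence demands.

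The first step is quantitative: reading off equation (\ref{eq:cs}), for $i\neq j$ the $(i,j)$-block of $\op C'_s$ is a fixed rational matrix plus $\sum_k p_k (v_k)_i(v_k)_j$ times a fixed $2\times 2$. Since $|p_k|<1$ and only $O(1)$ variables appear in any pair of clauses, the entries are uniformly bounded across $s$. A single universal offset $\delta>0$ added entrywise to each off-diagonal block in the clause region therefore suffices to guarantee nonnegativity for every $s$.

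The second step is to realise this offset through a matrix $\op E$ whose image is orthogonal to $\map{im}\op C_s$. For each unordered pair of clauses $(i,j)$, I would enlarge the orthonormalisation region by two fresh basis directions $f_{ij},g_{ij}$ and set
\[
\op E := \sum_{i<j}\alpha_{ij}\, u_{ij} u_{ij}\transpose \otimes \tau_{ij},
\]
where $u_{ij} = e_i+e_j+\beta_{ij} f_{ij}+\gamma_{ij} g_{ij}$ is tuned so that $u_{ij}\perp c_1,c_2,v_1,\ldots,v_{n_v}$ (the components along $f_{ij},g_{ij}$ absorb the inner products with $c_{1,2}$ and any $v_k$). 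The $2\times 2$ factor $\tau_{ij}$ is chosen so that $(e_i+e_j)(e_i+e_j)\transpose\otimes \tau_{ij}$ contributes only off-diagonal entries equal to $\delta$ in the clause region, while the on-diagonal pollution at $(i,i)$ and $(j,j)$ is routed into $f_{ij},g_{ij}$ via the orthonormalisation terms. By construction the $u_{ij}$ are orthogonal to $\{c_1,c_2,v_k\}$ and, by choice of distinct $f_{ij},g_{ij}$, mutually orthogonal; hence $\map{im}\op E\perp\map{im}\op C_s$ is automatic. Perturbing the $\alpha_{ij}$ slightly in the spirit of equation (\ref{eq:booldistinct}) makes all nonzero eigenvalues of $\op E$ distinct, so that $\op E$ is non-derogatory.

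The third step handles the root uniqueness. Since $\op E$ is non-derogatory, every $\op E'\in\sqrt{\op E^2}$ is obtained from $\op E$ by flipping the sign of one or more eigenvalues. Flipping the sign of the eigenvalue associated with a summand $\alpha_{ij} u_{ij}u_{ij}\transpose\otimes \tau_{ij}$ negates its contribution to the $(i,j)$-block; that block then sees an entry shifted by $-\delta$ instead of $+\delta$, which by the bound in Step 1 forces it to be strictly negative for every $s$. A case analysis analogous to \prettyref{fig:proof2x2} confirms that no combination of flips can compensate, so $\op C_s+\op E'$ has a negative entry for every $\op E'\neq\op E$ and every $s$.

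The main obstacle is the delicate design of the $2\times 2$ factors $\tau_{ij}$ together with the coefficients $\beta_{ij},\gamma_{ij}$ in Step 2. They must simultaneously (i) add a common positive offset $\delta$ to every off-diagonal entry of the clause region, (ii) divert all on-diagonal pollution into the auxiliary orthonormalisation directions, and (iii) preserve orthogonality to $\{c_1,c_2,v_k\}$. This is the same flavour of bookkeeping as in the construction of $\op Q_s$ in \prettyref{lem:nonnegequiv1}, where explicit numerical constants were chosen to separate the stochastic normalisation from the SAT-encoding data; here the constants are chosen instead to separate off-diagonal gadgetry from the on-diagonal SAT constraints.
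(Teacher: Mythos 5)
There is a genuine gap in Step~2, and it is the load-bearing step. For a rank-one term $\alpha_{ij}\,u_{ij}u_{ij}\transpose\otimes\tau_{ij}$ with $u_{ij}=e_i+e_j+\beta_{ij}f_{ij}+\gamma_{ij}g_{ij}$, the on-diagonal blocks receive $(u_{ij}u_{ij}\transpose)_{ii}\,\tau_{ij}=\tau_{ij}$ and likewise at $(j,j)$: the components along the fresh directions $f_{ij},g_{ij}$ only produce entries in rows and columns indexed by those fresh directions, so they cannot ``absorb'' or cancel anything sitting at $(i,i)$ or $(j,j)$. Hence your requirement (ii)---divert all on-diagonal pollution into the orthonormalisation region---is impossible for this ansatz unless $\tau_{ij}=0$. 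This matters quantitatively: every off-diagonal block of $\op C'_s$ contains the fixed contribution $c_1c_1\transpose\otimes\bigl(\begin{smallmatrix}1&1\\-1&1\end{smallmatrix}\bigr)$, i.e.\ an entry equal to $-1$ independent of $s$, so the offset $\delta$ must be of order one; but the SAT-encoding inequalities on the diagonal blocks only have $O(1)$ head room (roughly $1/2$), so an order-one $\tau_{ij}$ landing there either destroys the encoding or trivialises it, breaking the ``only if'' direction of the lemma. A secondary problem: the vectors $u_{ij}$ and $u_{ik}$ both contain $e_i$ with coefficient $1$, so distinctness of the fresh directions does not make them mutually orthogonal ($u_{ij}\transpose u_{ik}=1$); orthogonality would need additional tuning, which your two fresh coordinates per pair are already committed to spending on orthogonality against $c_1,c_2,v_k$.

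The paper avoids exactly this trap by not building the mask pairwise. It takes a single global rank-one mask $\op E_1=E_1E_1\transpose\otimes(\cdot)$ with $E_1=(1,\ldots,1,\vec o)\transpose$, which covers the whole top-left block (diagonal blocks included) with positive entries, and then \emph{explicitly subtracts} rank-one corrections $\tfrac72\sum_i t_i\,b_ib_i\transpose\otimes(\cdot)$ supported on single clause indices $b_i=(e_i,\vec o)$, whose sole job is to erase the mask on the $n_c$ on-diagonal blocks while leaving it in place off the diagonal; the slightly distinct weights $t_i$ keep the nonzero spectrum simple, orthogonality to $\map{im}\op C_s$ is arranged in the $\vec o$ region, and the root-uniqueness claim is then checked by observing that flipping the sign of any of these eigen-components creates negative entries in parts of the block where $\op C_s$ vanishes for every $s$. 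Your Step~3 sign-flip argument is in the same spirit as the paper's, but it cannot be salvaged until the mask itself is redesigned so that the diagonal blocks are genuinely untouched.
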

\begin{proof}
Define
\[\op E_1:=E_1E_1\transpose\otimes\begin{pmatrix}
1 & 1 & 0 & 0 \\ 1 & 1 & 0 & 0 \\ 1 & 1 & 0 & 0 \\ 1 & 1 & 0 & 0
\end{pmatrix}\quad\text{where}\quad E_1:=(1,\ldots,1,\vec o)\transpose\point\]
Then $\op E_1$ has rank $1$.

From this mask, we now erase the first $n_c$ on-diagonal $4\times4$-blocks, while leaving all other entries in the upper left $4n_c\times4n_c$ block positive. Define $b_i:=(e_i,\vec o)\in\sset C^d$ for $i=1,\ldots,n_c$ where $e_i$ denotes the $i$\th unit vector, and let
\[\op E:=\frac72\op E_1-\frac72\sum_{i=1}^{n_c}t_ib_ib_i\transpose\otimes\begin{pmatrix}
1 & 1 & -1 & 0 \\ 1 & 1 & -1 & 0 \\ 0 & 0 & 0 & 0 \\ 0 & 0 & 0 & 0
\end{pmatrix}\point\]
The variables $t_i$ are chosen close to $1$ but distinct, e.g.
\begin{equation}\label{eq:maskdistinct}
t_i:=\left(1-\frac1M-\frac{i}{Mn_c}\right)\comma
\end{equation}
where $M\in\sset N$ large but fixed. Then $\op E$ has rank $n_c+1$, and adding $\op E$ to $\op C_s$ trivializes all unwanted inequalities in the upper left $4n_c\times4n_c$ block. By picking $M$ large enough, the on-diagonal inequalities are left intact.

One can check that all other possible sign choices for the roots of $\op E$ create negative entries in parts of the upper left block where $\op C_s$ is zero $\forall s$. Furthermore, $\op C_s$ and $\op E$ have distinct nonzero eigenvalues by construction---the orthogonality condition is again straightforward, hence the last two claims follow.
\end{proof}

\paragraph{Orthonormalization Region}
\begin{lemma}\label{lem:techD}
Let $4n<d$ and $\delta\gg1$. There exists a nonnegative rank $2$ matrix $\op D\in\sset C^{d\times d}$ such that the top left $4n\times 4n$ block of $\op D$ has entries $\op D_{ij}\in\mathcal O(\delta^{-2})$ if $j\nmid4$ and the rest of the matrix entries are $\Omega(\delta^{-1})$. If $\op D'\in\sqrt{\op D^2}$, either the same holds true for $\op D'$, or $\op D'_{ij}<0\ \forall j<4n+1,j\mid4$.
\end{lemma}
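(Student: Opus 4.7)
The plan is to construct $\op D$ explicitly as a sum of two nonnegative rank-one matrices whose column and row profiles enforce the required size pattern, and then to analyse the square roots of $\op D^2$ via the spectral decomposition of $\op D$ on its two-dimensional image. Concretely, I would let $\mathbf 1 \in \sset R^d$ be the all-ones vector, $f \in \sset R^d$ the indicator of rows $\{i > 4n\}$, $g \in \sset R^d$ the indicator of columns $S := \{j \le 4n : 4 \nmid j\}$, and $h \in \sset R^d$ the vector with $h_j = \delta^{-2}$ on $S$ and $h_j = \delta^{-1}$ elsewhere. Setting
\[
  \op D := \mathbf 1\, h^T + f\, g^T
\]
produces a componentwise nonnegative matrix of rank $2$, and a direct case check over the four combinations of $(i \le 4n,\, i > 4n)$ and $(j \in S,\, j \notin S)$ verifies that $\op D_{ij} = \delta^{-2}$ whenever $i \le 4n$ and $j \in S$, while every other entry is at least $\delta^{-1}$.

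For the square-root analysis, I would restrict $\op D$ to the two-dimensional image $V = \mathrm{span}(\mathbf 1, f)$. In the basis $(\mathbf 1, f)$ the operator $\op D|_V$ has trace of order $\delta^{-1}$ and determinant $-3nK\delta^{-1}$ with $K := d - 4n$, yielding two real eigenvalues $\lambda_\pm$ of opposite sign and magnitude $\Theta(\delta^{-1/2})$. By \prettyref{th:roots}, the primary square roots of $\op D^2$ on $V$ are parametrised by the four sign choices $\pm|\lambda_+|P_+ \pm |\lambda_-|P_-$, where $P_\pm = v_\pm \ell_\pm^T/\langle\ell_\pm, v_\pm\rangle$ are the non-orthogonal spectral projectors built from right eigenvectors $v_\pm \in \mathrm{span}(\mathbf 1, f)$ and left eigenvectors $\ell_\pm \in \mathrm{span}(h, g)$. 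The sign choice reproducing $\op D$ is $(+,-)$, its negation is $-\op D$, and the two remaining mixed choices give $\op D' := \op D + 2|\lambda_-|P_-$ and $-\op D'$.

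The main obstacle is the sign analysis for $\op D'$ and $-\op D'$. For $-\op D$ and $-\op D'$ the dichotomy is immediate, since flipping the sign of the dominant $|\lambda_+|P_+$ term renders every column $j \le 4n$ with $4 \mid j$ uniformly negative. For $\op D'$ one expands the entries $(v_-)_i$, $(\ell_-)_j$ and the inner product $\langle\ell_-, v_-\rangle$ in powers of $\delta^{-1/2}$ and compares the leading orders of $\op D_{ij}$ and $2|\lambda_-|(P_-)_{ij}$ in each of the relevant $(i, j)$ regions; the crucial point is that $(\ell_-)_j$ changes sign between $j \in S$ (where it is of order $-\delta^{-1/2}$) and $j \notin S$ (where it equals $\delta^{-1}$), so that in the $j \mid 4$ columns the correction is either of the same order as $\op D_{ij}$ and opposite in sign---forcing $\op D'_{ij} < 0$---or strictly dominated, in which case $\op D'$ inherits the same asymptotic size pattern as $\op D$. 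Finally, nonprimary roots on $V$ are ruled out because $\lambda_+^2 \ne \lambda_-^2$, and any square root acting nontrivially on $\ker \op D^2$ decomposes into a primary root on $V$ plus a nilpotent perturbation, which introduces entries without definite sign and therefore cannot preserve the $\Omega(\delta^{-1})$ lower bound on the rest of the matrix, so such roots must fall onto the negative branch of the dichotomy as well.
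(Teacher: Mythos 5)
Your explicit construction $\op D=\mathbf 1 h\transpose+fg\transpose$ reproduces the required entry pattern, but the square-root dichotomy---which is the actual content of the lemma---fails for it, and the failure occurs exactly at the step you left vague. Set $K:=d-4n$, $A:=h\cdot\mathbf 1=3n\delta^{-2}+(n+K)\delta^{-1}$ and $B:=h\cdot f=K\delta^{-1}$; the nonzero eigenvalues of your $\op D$ solve $\lambda^2-A\lambda-3nB=0$, so with $s:=|\lambda_-|$ one has $2s+A=\sqrt{A^2+12nB}=\Theta(\delta^{-1/2})$, $v_-=\mathbf 1-(3n/s)f$, $\ell_-=h-(B/s)g$, $\ell_-\cdot v_-=-(2s+A)$, and the mixed root is $\op D'=\op D-\frac{2s}{2s+A}\,v_-\ell_-\transpose$. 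For $i\le 4n$ and $j\le 4n$ with $4\mid j$ we have $v_{-,i}=1$, $\ell_{-,j}=h_j=\delta^{-1}$, hence $\op D'_{ij}=\delta^{-1}\bigl(1-\tfrac{2s}{2s+A}\bigr)=\delta^{-1}\tfrac{A}{2s+A}>0$: the correction is indeed of the same order and opposite sign, but its magnitude is strictly smaller by the factor $2s/(2s+A)<1$, so negativity is \emph{not} forced. At the same time, for $i\le 4n$ and $j\le 4n$ with $4\nmid j$, $\op D'_{ij}=\delta^{-2}\tfrac{A}{2s+A}+\tfrac{2B}{2s+A}=\Theta(\delta^{-1/2})$, which is not $\mathcal O(\delta^{-2})$, so $\op D'$ does not inherit the size pattern of $\op D$ either. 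This single primary root therefore escapes both horns of the dichotomy (for $d>5n$ it is in fact entrywise positive), so your proposed $\op D$ does not satisfy the lemma, and no sharpening of the leading-order bookkeeping can rescue it---the matrix itself is wrong.

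The structural reason is that your two rank-one pieces do not annihilate each other ($h\cdot f=K\delta^{-1}\neq 0$), so the spectral decomposition mixes them, producing eigenvalues $\pm\Theta(\delta^{-1/2})$ and a mixed-sign branch with no useful sign structure. The paper avoids this by building $\op D$ from two rank-one terms based on \emph{mutually orthogonal} vectors ($E_2\perp\Delta$, enforced by the last component $a$ of $\Delta$), each tensored with a fixed $4\times4$ pattern whose fourth column is zero. Then the two terms multiply to zero in both orders, the roots of $\op D^2$ on the image are exactly the four independent sign choices on the two terms, and the dichotomy is immediate: flipping the $E_2$-term makes every column with $4\mid j$ negative (the second term vanishes on those columns), while keeping it and flipping the $\Delta$-term only alters entries that are confined to the $4\nmid j$ columns and are uniformly $\mathcal O(\delta^{-2})$ there (and subleading elsewhere). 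To salvage your approach you would need to redesign $\op D$ so that the two rank-one components mutually annihilate and the ``small'' component avoids the $4\mid j$ columns entirely; your closing argument about roots acting on $\ker\op D^2$ would also need to be made precise, but the decisive gap is the construction itself.
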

\begin{proof}
Define
\[E_2:=\bigg(\underbrace{\frac1\delta,\ldots,\frac1\delta}_{n\ \text{times}},1,\ldots,1\bigg)\in\sset C^d\]
and let $\op E_2:=E_2E_2\transpose\otimes\op 1_4$, where $\op 1_4:=(1)_{1\le i,j\le4}$. Let further
\[\Delta:=\bigg(\underbrace{\frac1\delta,\ldots,\frac1\delta}_{n\ \text{times}},-\frac1\delta,\ldots,-\frac1\delta,a\bigg)\in\sset C^d\comma\]
where $0<a<1$ is used to orthonormalize $\Delta$ and ${E_2}$, which is the case if
\[a=-\frac{n}{\delta^2}+\frac{d-n-1}{\delta}\point\]
By explicitly writing out the rank $2$ matrix
\[\op D:=\op E_2\pm\Delta\Delta\transpose\otimes\begin{pmatrix}
1 & 1 & 1 & 0 \\ 1 & 1 & 1 & 0 \\ 1 & 1 & 1 & 0 \\ 1 & 1 & 1 & 0
\end{pmatrix}\comma\]
it is straightforward to check that $\op D$ fulfills all the claims of the lemma---see \prettyref{fig:unsatex} for an example.
\end{proof}

%We now use the freedom left in all previous eigenprojectors to pairwise orthonormalize them---this is always possible and requires at most $2n_v+\mathrm{const}=:n_o$ entries.

\subsection{Lifting Singularities}\label{sec:sing}
The reader will have noted by now that even though we have orthonormalized all our eigenspaces, ensuring that the nonzero eigenvalues are all distinct, we have at the same time introduced a high-dimensional kernel in $\op C_s$, $\op E$ and $\op D$. The following lemma shows that this does not pose an issue.

\begin{lemma}\label{lem:sing}
  Let $(\op A_s)_{s\in\set S}$ be the family of primary rational roots of some degenerate $\op B\in\sset Q^{d\times d}$. Then there exists a non-degenerate matrix $\op B'$, such that for the family $(\op A'_s)_{s\in\set S}$ of roots of $\op B'$, we have $\op A_s$ positive iff $\op A'_s$ positive. Furthermore, the entries of $\op A'_s$ are rational with bit complexity $\mathfrak r(\op A'_s)=\mathcal O(\map{poly}(\mathfrak r(\op A_s))$.
\end{lemma}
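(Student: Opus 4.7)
The plan is to remove the degeneracies of $\op B$ by a small rational spectral perturbation that preserves the positivity pattern of each primary root, while keeping all intermediate quantities rational and of polynomial bit complexity. The strategy is: decompose $\op B$ spectrally, split each eigenprojector into rank-1 pieces, and attach slightly different nearby rational-square eigenvalues so that $\op B'$ is non-derogatory with a finite family of primary roots in canonical bijection with $\set S$.

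Concretely, since $\op B$ admits a rational primary root, it is diagonalizable over $\sset Q$, and we may write $\op B = \sum_{i=0}^{r} \lambda_i \op P_i$ where the $\op P_i$ are mutually orthogonal rational eigenprojectors, $\lambda_0 = 0$, and $\mu_i := \sqrt{\lambda_i} \in \sset Q$ for $i \geq 1$ (rationality is forced by the existence of a rational primary root $\op A_s = \sum_{i\geq1} \sigma_i^{(s)} \mu_i \op P_i$). Refine each $\op P_i = \sum_{j=1}^{d_i} \op p_{i,j}$ into mutually orthogonal rational rank-1 projectors via rational Gram--Schmidt on a rational basis of $\map{im}\,\op P_i$, pick distinct positive rationals $\epsilon_{i,j} := j/N$ for a large $N \in \sset N$ of polynomial bit length, and set
\[
\op B' := \sum_{i,j} (\mu_i + \epsilon_{i,j})^2\, \op p_{i,j}.
\]
By construction $\op B'$ has $d$ pairwise distinct rational-square eigenvalues, hence is non-degenerate, and its $2^d$ primary roots $\op A'_\tau := \sum_{i,j} \tau_{i,j}(\mu_i + \epsilon_{i,j})\, \op p_{i,j}$, $\tau \in \{\pm 1\}^d$, are rational with bit complexity polynomial in $\mathfrak r(\op B)$ and $\log N$. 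To each $s \in \set S$ associate the \emph{uniform} lifted sign vector $\tau_s$ with $(\tau_s)_{i,j} = \sigma_i^{(s)}$ for every $j$, and set $\op A'_s := \op A'_{\tau_s}$; this satisfies $\|\op A'_s - \op A_s\| = O(1/N)$.

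It remains to match the positivity patterns. For the \emph{uniform} sign choices, every strictly positive entry of $\op A_s$ is bounded below by an inverse-polynomial positivity margin $c > 0$ engineered into the constructions of $\op C_s$, $\op E$, and $\op D$ (the spacings $1/N$ and $1/M$ of \prettyref{eq:booldistinct} and \ref{eq:maskdistinct} were chosen for precisely this reason), so taking the perturbation parameter a polynomial factor smaller than $c$ prevents any sign flip. Structurally-zero entries of $\op A_s$ stay structurally zero in $\op A'_s$ because the rank-1 refinement of each $\op P_i$ preserves its block support pattern. For the \emph{non-uniform} lifted sign vectors $\tau \notin \{\tau_s : s \in \set S\}$---which correspond precisely to the non-primary roots of $\op B$ that mix signs inside an original eigenspace---\prettyref{lem:techC}, \ref{lem:techE}, and \ref{lem:techD} each explicitly show that at least one entry is bounded below by $c$ with the \emph{wrong} sign, and the $O(1/N)$ perturbation cannot erase that witness. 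Hence $\op A_s$ is nonnegative iff $\op A'_s$ is, and no spurious primary root of $\op B'$ is nonnegative.

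The main obstacle is this last robustness argument, specifically verifying that (a) the margin $c$ is inverse-polynomial in the input size rather than inverse-exponential, and (b) the rank-1 refinement respects the support structure of each $\op P_i$ uniformly across $s$. Both follow by direct inspection of the closed-form tensor expressions in \prettyref{lem:techC}, \ref{lem:techE}, and \ref{lem:techD}: the nonzero entries involve a small number of rational coefficients with explicit polynomial-size denominators, and the block-sparsity of $\op C_s$, $\op E$, and $\op D$ is invariant under orthogonal splitting of each eigenspace. The stated bit-complexity bound $\mathfrak r(\op A'_s) = \mathcal O(\map{poly}(\mathfrak r(\op A_s)))$ then follows from $\log N = \mathcal O(\log(1/c)) = \mathcal O(\map{poly}(\mathfrak r(\op A_s)))$.
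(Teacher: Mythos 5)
Your overall strategy---lift the degeneracy by a tiny rational spectral perturbation kept below the positivity margin that the construction builds in---is the same as the paper's, but your implementation has a genuine gap at its first step. You write $\op B=\sum_i\lambda_i\op P_i$ with mutually orthogonal \emph{rational} eigenprojectors and \emph{rational} $\mu_i=\sqrt{\lambda_i}$, claiming this is forced by the existence of a rational primary root. It is not: a rational matrix possessing a rational primary square root need not have rational, or even real, eigenvalues (take any rational $\op A$ with irrational spectrum and distinct squared eigenvalues, pad with a zero block, and set $\op B:=\op A^2$). Worse, the matrices the reduction actually produces violate your assumption outright: the $2\times2$ blocks in \prettyref{eq:cs} contribute eigenvalues proportional to $1\pm\ii$ and $\pm\ii$, so $\op C_s$ (hence $\op M_s$) has non-real spectrum, and no real orthogonal rank-one refinement with shifted eigenvalues $(\mu_i+\epsilon_{i,j})^2$ exists. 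Consequently the formula $\op B'=\sum_{i,j}(\mu_i+\epsilon_{i,j})^2\,\op p_{i,j}$ and the catalogue of $2^d$ primary roots $\op A'_\tau$ cannot be written down as stated. Repairing this forces you to perturb in conjugate pairs (real Jordan structure), at which point you essentially recover the paper's route: it perturbs \emph{only} the zero eigenvalues---the nonzero spectrum is already simple thanks to the spacings in \prettyref{eq:booldistinct} and \ref{eq:maskdistinct}---and controls the entry-wise effect through the similarity transform, demanding $0<\lambda'^{(0)}_i\le 1/(C\cdot d^3\cdot\max_{ij}\{|\op Z_{ij}|,|\op Z^{-1}_{ij}|\})$, which needs no rationality or reality assumption on the spectrum.

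A secondary weak point: your claim that structural zeros of $\op A_s$ survive because the rank-one refinement of each $\op P_i$ ``preserves its block support pattern'' is unjustified---individual rank-one pieces produced by Gram--Schmidt inside $\map{im}\,\op P_i$ can be supported where $\op P_i$ (and $\op A_s$) vanish. The paper does not need any such support statement: it only requires that strict positivity and negativity be preserved, which suffices because after adding the masks of \prettyref{lem:techE} and \ref{lem:techD} every relevant entry is bounded away from zero with head space independent of the problem size. Your positivity-margin and bit-complexity considerations are sound and coincide with the paper's; the defect is confined to the rational-spectral-decomposition step on which the whole construction of $\op B'$ rests.
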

\begin{proof}
  Take a matrix $\op A\in(\op A_s)_{s\in\set S}$. We need to distort the zero eigenvalues $\{\lambda^{(0)}_i\}$ slightly away from $0$. Using notation from \prettyref{def:char}, a conservative estimate for the required smallness without affecting positivity would be
  \[\lambda^{(0)}_i\longmapsto\lambda'^{(0)}_i\quad:\quad0<\lambda'^{(0)}_i\le\frac{1}{C\cdot d^3\cdot\max_{ij}\{|\op Z_{ij}|,|\op Z^{-1}_{ij}|\}}\comma\]
  where we used the Jordan canonical form $\op A=\op Z\op\Lambda\op Z^{-1}$ for some invertible $\op Z$ and $\op\Lambda=\map{diag}(\op J_0,\op J_1)$, such that $\op J_0$ collects all Jordan blocks corresponding to the eigenvalue $0$, and $\op J_1$ collects the remaining ones.
\end{proof}
This will lift all remaining degeneracies and singularities, without affecting our line of argument above. Observe that all inequalities in our construction were bounded away from $0$ with enough head space independent of the problem size, so positivity in the lemma is sufficient.

We thus constructed an embedding of \sat into non-derogatory and non-degenerate matrices, as desired. It is crucial to note that we do not lose anything by restricting the proof to the study of these matrices, as the following lemma shows.
\begin{lemma}\label{lem:nondeg}
There exists a Karp reduction of the \textsc{Divisibility} problems when defined for \emph{all} matrices to the case of \emph{non-degenerate and non-derogatory} matrices.
\end{lemma}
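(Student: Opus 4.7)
The plan is to exhibit a polynomial-time map $\op B \mapsto \op B'$ such that $\op B'$ is non-degenerate (distinct eigenvalues) and non-derogatory (one Jordan block per eigenvalue), and such that $\op B$ and $\op B'$ have the same divisibility answer. Concretely, I would set $\op B' := \op B + \varepsilon \op D$ with $\op D$ a fixed rational matrix chosen generically---for instance $\op D = \map{diag}(1,2,\ldots,d)$---so that the characteristic polynomial of $\op B + t\op D$ has nonvanishing discriminant in $t$ and hence produces distinct eigenvalues (and consequently a single Jordan block each) for all sufficiently small $\varepsilon > 0$. The scalar $\varepsilon$ is taken rational with polynomial bit complexity via an estimate of the type appearing in \cref{lem:sing}, namely $\varepsilon \le 1/(Cd^3\max_{ij}\{|\op Z_{ij}|,|\op Z^{-1}_{ij}|\})$, where $\op Z$ implements the Jordan normal form of $\op B$.

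For the forward direction, if $\op A$ is a valid root of $\op B$, then the primary square root $\op A' := \sqrt{\op B + \varepsilon \op D}$ is well defined and depends analytically on $\varepsilon$ through the Jordan decomposition, with $\op A' \to \op A$ as $\varepsilon \to 0$. Since every constraint in our Divisibility problems---nonnegativity, (double) stochasticity, or the \cptp property---is an open condition that in our context comes with polynomial head space (either built into the \sat-to-matrix construction, as noted just after \cref{lem:sing}, or inherent to any strictly feasible witness), $\op A'$ remains in the constraint set for the chosen $\varepsilon$, establishing that if $\op B$ admits a valid root then so does $\op B'$.

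For the reverse direction I would exploit that the set $\mathcal L$ of matrices admitting a valid root is closed in operator norm: the sets of nonnegative, (doubly) stochastic, and \cptp matrices are each closed (compact in the latter cases) subsets of the ambient matrix space, and the squaring map $\op A \mapsto \op A^2$ is continuous, so its image $\mathcal L$ is closed. Its complement is therefore open, so if $\op B \notin \mathcal L$ then every sufficiently small perturbation of $\op B$---in particular $\op B'$---also lies outside $\mathcal L$; contrapositively, $\op B' \in \mathcal L$ implies $\op B \in \mathcal L$.

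The main obstacle is quantifying both arguments so that a single $\varepsilon$ of polynomial bit complexity suffices. The forward side is handled directly by \cref{lem:sing}. The reverse side is subtler, because a priori the distance from $\op B$ to the boundary of $\mathcal L$ could be inverse-exponential in $d$. However, $\mathcal L$ is a semi-algebraic set cut out by polynomial (in)equalities of degree bounded in $d$, and standard effective real algebraic geometry yields a lower bound on this distance whose bit complexity is polynomial in that of $\op B$. Taking $\varepsilon$ below the minimum of this bound and the one from \cref{lem:sing} should then deliver the required Karp reduction.
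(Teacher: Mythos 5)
Your perturbation strategy has a genuine gap in the forward direction, and it is not how the paper argues. The existence of a valid root is \emph{not} an open condition with ``head space'' for an arbitrary instance: the head-space remark after \cref{lem:sing} applies only to the matrices produced by the \sat embedding, not to a general input $\op B$. A valid witness can sit exactly on the boundary of the constraint set---e.g.\ $\op B=\1$ is stochastic-divisible with (non-primary, boundary) roots given by permutation matrices, yet $\op B'=\1+\varepsilon\,\map{diag}(1,\ldots,d)$ has only the $2^d$ diagonal sign-choice roots, none of which is stochastic (indeed $\op B'$ itself is no longer stochastic, since your $\op D$ does not preserve row sums or trace-preservation), so a \yes instance maps to a \no instance. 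More generally, when $\op B$ is derogatory its stochastic root may belong to a continuous family of non-primary roots that is destroyed by any generic perturbation, so ``$\op A'\to\op A$ as $\varepsilon\to0$'' is unavailable. The reverse direction is also shakier than you suggest: the divisible set is semi-algebraic in $d^2$ variables, and effective distance/separation bounds from quantifier elimination are exponential in the number of variables, so the claimed polynomial-bit-complexity lower bound on the distance to the boundary is not a standard consequence.

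The paper's proof sidesteps all of this with a purely complexity-theoretic composition: by \cref{lem:npcontainment} the \textsc{Divisibility} problems on \emph{all} matrices are in \np, hence Karp-reduce to \sat (by \np-completeness), and the paper's \sat-to-matrix construction (after \cref{lem:sing}) embeds any \sat instance into non-degenerate, non-derogatory matrices; composing the two maps gives the desired reduction without ever perturbing the given instance. If you want a direct perturbation argument instead, you would need both a perturbation that preserves the relevant normalization (stochasticity, trace preservation) and a quantitative argument handling boundary witnesses and non-primary roots---none of which is supplied here.
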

\begin{proof}
As shown in \prettyref{lem:npcontainment}, containment in \np\ for this problem is easy to see, also in the degenerate or derogatory case. Since \sat is \np-complete, there has to exist a poly-time reduction of the \textsc{Divisibility} problems---when defined for \emph{all} matrices---to \sat. Now embed this \sat-instance with our construction. This yields a poly-time reduction to the non-degenerate non-derogatory case.
\end{proof}

\subsection{Complete Embedding}\label{sec:emb}
\begin{figure}[h]
\centering
\subfigure[coding part $\op C_s$]{\includegraphics[width=0.4\textwidth]{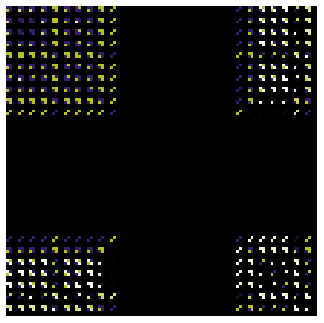}}\hspace{.8em}
\subfigure[mask $\op E$]{\includegraphics[width=0.4\textwidth]{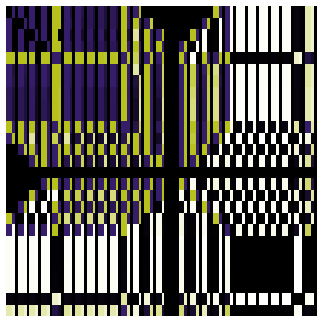}}\\
\subfigure[mask $\op D$]{\includegraphics[width=0.4\textwidth]{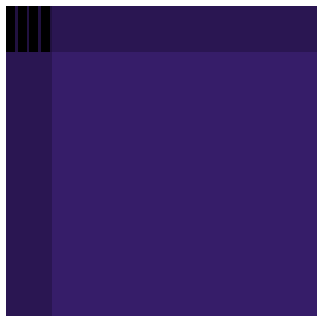}}\hspace{.8em}
\subfigure[combined matrix $\op M_s$]{\includegraphics[width=0.4\textwidth]{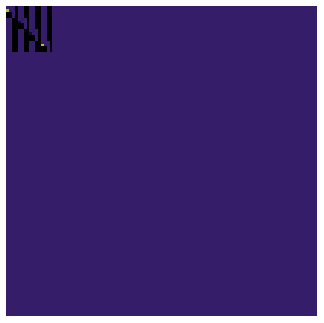}}\\
\vspace{1em}
\hspace{-.25em}\subfigure[color scale, clamped]{\includegraphics[width=0.8\textwidth + 1.7em]{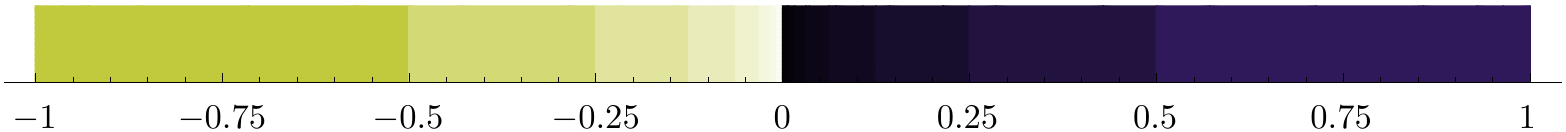}}\\
\subfigure[\sat instance]{
$\map R(m_1,m_2,m_4)\land\map R(m_2,m_3,m_4)\land\map R(m_1,m_3,m_4)\land\map R(m_1,m_2,m_3)$
}
\caption{One branch of an unsatisfiable instance of \sat encoded into a matrix of total rank $19$. The negative entries---two bright dots---in the upper left block in the combined matrix (d) indicate that this branch does not satisfy the given instance. By looking at all other blocks, one sees that none translates to a nonnegative matrix. Observe that in this na\"ive implementation the orthonormalization region is suboptimally large.}
\label{fig:unsatex}
\end{figure}

We now finally come to the proof of \prettyref{th:satnonneg}.
\begin{proof}[\prettyref{th:satnonneg}]
Construct the family $(\op C_s+\op E)_{s\in\set S}$ using \prettyref{lem:techC} and \prettyref{lem:techE}, ensuring that all orthonormalizing is done, which preliminarily fixes the dimension $d$. By \prettyref{lem:techD}, we now construct a mask $\op D(\delta)$ of dimension $d+d'$, where $d'>0$ is picked such that we can also orthonormalize all previous vectors with respect to $ {E_2}$ and $\delta$.

By lemmas \ref{lem:techC}, \ref{lem:techE}, \ref{lem:techD} and \ref{lem:sing}, the perturbed family $(\op M'_s)_{s\in\set S}:=(\op C_s+\op E+N\op D(\delta))'_s$---where $N$ and $\delta\in\sset Q$ are chosen big enough so that all unwanted inequalities are trivially satisfied---fulfils the claims of the theorem and the proof follows.
\end{proof}

We finalize the construction as follows. In \prettyref{th:satnonneg}, we have embedded a given \sat instance into a family of matrices $(\op M_s)_{s\in\set S}$, such that the instance is satisfiable if and only if at least one of those matrices is nonnegative.

By rescaling the entire matrix such that $\max_{ij}(\op M_s)_{ij}=1/2$,
we could show that this instance of \sat is satisfiable if and only if the normalized matrix family $(\op Q_s)_{s\in\set S}$, which we construct explicitly, contains a stochastic matrix.

As shown in \prettyref{sec:equiv}, this can clearly be answered by \textsc{Stochastic Divisibility}, as the family $(\op Q_s)_{s\in\set S}$ comprises all the roots of a unique matrix $\op P$. If this matrix is \emph{not} stochastic, our instance of \sat is trivially not satisfiable. If the matrix \emph{is} stochastic, we ask \textsc{Stochastic Divisibility} for an answer---a positive outcome signifies satisfiability, a negative one non-satisfiability.

\subsection{Bit Complexity of Embedding}
To show that our results holds for only polynomially growing bit complexity, observe the following proposition.
\begin{proposition}
The bit complexity $\mathfrak r(\op M_s)$ of the constructed embedding of a \sat instance $(n_v,n_c,m_i,m_{ij})$ equals $\mathcal O(\map{poly}(n_v,n_c))$.
\end{proposition}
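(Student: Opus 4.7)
The plan is to bound the bit complexity contribution from each ingredient of the embedding---the coding block $\op C_s$, the mask $\op E$, the inflation $N\op D(\delta)$, and the singularity-lifting perturbation of \prettyref{lem:sing}---and then observe that $\op M_s$ is a rational linear combination of these pieces, so polynomiality in the bit complexity is preserved under summation.

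First I would pin down the ambient dimension $d$. Counting the structural vectors ($c_1,c_2$, the $n_v$ variable vectors $v_k$, the $n_c$ indicator vectors $b_i$, plus $E_1,E_2,\Delta$), together with an orthonormalization region whose size is fixed by the number of orthogonality constraints to be satisfied, yields $d=\mathcal O(n_v+n_c)$. Second, I would bound the scalar parameters. The rescaled booleans $p_k=(1-1/N-k/(Nn_v))m_k$ and mask weights $t_i=(1-1/M-i/(Mn_c))$ are rationals whose denominators have bit-size $\mathcal O(\log(N n_v))$ and $\mathcal O(\log(M n_c))$. Since $N$ and $M$ enter only to split eigenvalues while preserving the constant head-space required by the \sat inequalities of \prettyref{eq:3sat2}, polynomial choices $N,M=\map{poly}(n_v,n_c)$ suffice. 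Analogously, $\delta$ and the coupling scalar $N$ in $N\op D(\delta)$ need only dominate polynomial functions of the previously fixed coefficients in order to trivialize the unwanted off-diagonal inequalities and the orthonormalization entries, so both admit logarithms that are $\map{poly}(n_v,n_c)$.

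Third, the orthonormalization region itself must be produced explicitly. All seed vectors have entries in $\{0,1,\pm1/\delta,a\}$ of already polynomial bit complexity, so completing them to an orthogonal system reduces to solving a rational linear system of size $\mathcal O(d)$. Cramer's rule combined with Hadamard's inequality on the resulting Gram matrix then gives the free entries with bit complexity $\map{poly}(d)=\map{poly}(n_v,n_c)$. Finally, \prettyref{lem:sing} directly certifies that the desingularizing perturbation inflates bit complexity only polynomially. Assembling the four bounds yields $\mathfrak r(\op M_s)=\mathcal O(\map{poly}(n_v,n_c))$ as required. The main technical obstacle I would expect is the third step: controlling the denominators introduced by iterated orthogonalization, where the Hadamard bound has to be invoked carefully to rule out an exponential blowup in the size of the subdeterminants.
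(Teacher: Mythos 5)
Your proposal is correct and follows essentially the same route as the paper: bound each ingredient separately, note that the rescaling parameters $p_k,t_i$ (and the masks' scalars) contribute only logarithmic bit-size, control the orthonormalization region by solving the rational linear system with polynomially bounded denominators, and invoke \prettyref{lem:sing} for the desingularization step. The only cosmetic difference is that you certify the orthonormalization via Cramer's rule plus Hadamard's bound, whereas the paper cites the Bareiss fraction-free elimination algorithm---the same determinant-based bound realized algorithmically.
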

\begin{proof}
We can ignore any construction that multiplies by a constant prefactor, for example \prettyref{lem:nonnegequiv1} and \prettyref{lem:nonnegequiv2}.
The renormalization for \prettyref{lem:nonnegequiv1} to $\max_{ij}\op M_{s,ij}=1/2$ does not affect $\mathfrak r$ either.

The rescaling in \prettyref{eq:booldistinct} and \prettyref{eq:maskdistinct} yields a complexity of $\mathcal O(\log n_v)$, and the same thus holds true for \prettyref{lem:techC} and \prettyref{lem:techE}.

The only other place of concern is the orthonormalization region. Let us write ${a_i}$ for all vectors that need orthonormalization. In the $n$\th step, we need to make up for $\mathcal O(n)$ entries with our orthonormalization, using the same amount of precision to solve the linear equations $({a_i\transpose}{a_n}=0)_{1\le i<n}$. This has to be done with a variant of the standard \knownth{Gauss} algorithm, e.g. the \knownth{Bareiss} algorithm---see for example \cite{Bareiss1968}---which has nonexponential bit complexity.

Together with the lifting of our singularities, which has polynomial precision, we obtain $\mathfrak r(\op M_s)=\mathcal O(\map{poly}(n_v,n_c))$.
Completing the embedding in \prettyref{sec:emb} changes the bit complexity by another polynomial factor, at most, and hence the claim follows.
\end{proof}

\section{Distribution Divisibility}\label{sec:distributions}
\subsection{Introduction}
Underlying stochastic and quantum channel divisibility, and---to some extent---a more fundamental topic, is the question of divisibility and decomposability of probability distributions and random variables. An illustrative example is the distribution of the sum of two rolls of a standard six-sided die, in contrast to the single roll of a twelve-sided die. Whereas in the first case the resulting random variable is obviously the sum of two uniformly distributed random variables on the numbers $\{1,\ldots,6\}$, there is no way to achieve the outcome of the twelve-sided die as any sum of nontrivial ``smaller'' dice---in fact, there is no way of dividing \emph{any} uniformly distributed discrete random variable into the sum of non-constant random variables. In contrast, a uniform continuous distribution can always be decomposed\footnote{All continuous uniform distributions decompose into the sum of a discrete \knownth{Bernoulli} distribution and another continuous uniform distribution. This decomposition is never unique.} into two \emph{different} distributions.

To be more precise, a random variable $X$ is said to be \knownth{divisible} if it can be written as $X=Y+Z$, where $Y$ and $Z$ are non-constant independent random variables that are identically distributed (iid). Analogously, \knownth{infinite divisibility} refers to the case where $X$ can be written as an infinite sum of such iid random variables.

If we relax the condition $Y\overset d=Z$---i.e. we allow $Y$ and $Z$ to have different distributions---we obtain the much weaker notion of \knownth{decomposability}. This includes using other sources of randomness, not necessarily uniformly distributed.

Both divisibility and decomposability have been studied extensively in various branches of probability theory and statistics. Early examples include \knownth{Cramer's theorem} \cite{Cramr1936}, proven in 1936, a result stating that a Gaussian random variable can only be decomposed into random variables which are also normally distributed. A related result on $\chi^2$ distributions by \knownth{Cochran} \cite{Cochran1934}, dating back to 1934, has important implications for the analysis of covariance.

An early overview over divisibility of distributions is given in \cite{Steutel1979}. Important applications of $n$-divisibility---the divisibility into $n$ iid terms---is in modelling, for example of bug populations in entomology \cite{Katti1977}, or in financial aspects of various insurance models \cite{Thorin1977,Thorin1977a}. Both examples study the overall distribution and ask if it is compatible with an underlying subdivision into smaller random events. The authors also give various conditions on distributions to be infinitely divisible, and list numerous infinitely divisible distributions.

Important examples for infinite divisibility include the \knownth{Gaussian}, \knownth{Laplace}, \knownth{Gamma} and \knownth{Cauchy} distributions, and in general all normal distributions. It is clear that those distributions are also finitely divisible, and decomposable. Examples of indecomposable distributions are \knownth{Bernoulli} and discrete uniform distributions.

However, there does not yet exist a straightforward way of checking whether a given discrete distribution is divisible or decomposable. We will show in this work that the question of decomposability is \np-hard, whereas divisibility is in \pp. In the latter case, we outline a computationally efficient algorithm for solving the divisibility question. We extend our results to weak-membership formulations (where the solution is only required to within an error $\epsilon$ in total variation distance), and argue that the continuous case is computationally trivial as the indecomposable distributions form a dense subset.

We start out in \prettyref{sec:prelims2} by introducing general notation and a rigorous formulation of divisibility and decomposability as computational problems. The foundation of all our distribution results is by showing equivalence to polynomial factorization, proven in \prettyref{sec:polyfac}. This will allow us to prove our main divisibility and decomposability results in \prettyref{sec:div} and \ref{sec:dec}, respectively.

\subsection{Preliminaries}\label{sec:prelims2}
\subsubsection{Discrete Distributions}
In our discussion of distribution divisibility and decomposability, we will use the standard notation and language as described in the following definition.
\begin{definition}\label{def:dist}
	Let $(\Omega, \F,  \map p)$ be a discrete \emph{probability space}, i.e. $\Omega$ is at most countably infinite and the \emph{probability mass function} $\map p:\Omega\longrightarrow[0,1]$---or \emph{pmf}, for short---fulfils $\sum_{x\in\Omega}\map p(x)=1$. We take the \emph{$\sigma$-algebra} $\F$ to be maximal, i.e. $\F=2^\Omega$, and without loss of generality assume that the \emph{state space} $\Omega=\sset N$. Denote the \emph{distribution} described by $\map p$ with $\mathfrak D$. A \emph{random variable} $X:\Omega\longrightarrow\set B$ is a measurable function from the sample space to some set $\set B$, where usually $\set B=\sset R$.
\end{definition}

For the sake of completeness, we repeat the following well-known definition of characteristic functions.
\begin{definition}\label{def:char}
	Let $\mathfrak D$ be a discrete probability distribution with pmf $\map p$, and $X\sim\mathfrak D$. Then
	\[ \phi_X(\omega):=\E(\ee^{\ii\omega X})=\int_\Omega\ee^{\ii\omega x}\dd F_X(x)=\sum_{x\in\Omega}\map p(x)\ee^{\ii\omega x} \]
	defines the \emph{characteristic function} of $\mathfrak D$.
\end{definition}
It is well-known that two random variables with the same characteristic function have the same cumulative density function.

\begin{definition}
	Let the notation be as in \prettyref{def:dist}. Then the distribution $\mathfrak D$ is called \emph{finite} if $\map p(k)=0\ \forall k\ge N$ for some $N\in\sset N$.
\end{definition}

\begin{remark}\label{rem:aligned}
	Let $\mathfrak D$ be a discrete probability distribution with pmf $\map p$. We will---without loss of generality---assume that $\map p(0)\ne0$ and $\map p(k)=0\ \forall k<0$ for the pmf $\map p$ of a finite distribution. It is a straightforward shift of the origin that achieves this.
\end{remark}

\subsubsection{Continuous Distributions}\label{sec:nondiscrete}
\begin{definition}\label{def:contdist}
	Let $(\mathcal X,\mathcal A)$ be a \emph{measurable space}, where $\mathcal A$ is the $\sigma$-algebra of $\mathcal X$. The probability distribution of a \emph{random variable} $X$ on $(\mathcal X,\mathcal A)$ is the \knownth{Radon-Nikodym derivative} $f$, which is a measurable function with $\p(X\in\set A)=\int_\set Af\dd\mu$, where $\mu$ is a reference measure on $(\mathcal X,\mathcal A)$.
\end{definition}
Observe that this definition is more general than \prettyref{def:dist}, where the reference measure is simply the counting measure over the discrete sample space $\Omega$. Since we are only interested in real-valued univariate continuous random variables, observe the following important
\begin{remark}
	We restrict ourselves to the case of $\mathcal X=\sset R$ with $\mathcal A$ the \knownth{Borel} sets as measurable subsets and the \knownth{Lebesgue} measure $\mu$. In particular, we only regard distributions with a \emph{probability density function} $f$---or \emph{pdf}, for short---i.e. we require the \emph{cumulative distribution function} $\map P(x):=\p(X\le x)\equiv\int_{y\le x} f(y)\dd y$ to be absolutely continuous.
\end{remark}
\begin{corollary}\label{cor:contdist}
	The cumulative distribution function $\map P$ of a continuous random variable $X$ is almost everywhere differentiable, and any piecewise continuous function $f$ with $\int_\sset Rf(x)\dd x=1$ defines a valid continuous distribution.
\end{corollary}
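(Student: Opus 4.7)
The plan is to split the corollary into its two independent halves and settle each by invoking a classical result from real analysis, so no genuine obstacle arises; the only care needed is in lining up the hypotheses imposed by the preceding remark with the hypotheses of the theorems invoked.

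For the first half, I would observe that the cumulative distribution function $\map P$ is, by definition, monotonically nondecreasing and bounded (taking values in $[0,1]$). Lebesgue's theorem on the differentiability of monotone functions then immediately yields that $\map P$ is differentiable almost everywhere on $\sset R$. In fact, the stronger hypothesis in the preceding remark---that $\map P$ is absolutely continuous---gives more: by the fundamental theorem of calculus for Lebesgue integrals, $\map P$ is differentiable a.e.\ with $\map P'(x)=f(x)$ a.e., where $f$ is exactly the Radon--Nikodym derivative of \prettyref{def:contdist}. I would state the corollary's first claim as the weaker a.e.\ differentiability, but mention this identification since it ties \prettyref{def:contdist} directly to the intuitive notion of a pdf.

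For the second half, given a piecewise continuous (nonnegative) function $f$ with $\int_{\sset R}f(x)\,\dd x=1$, I would define
\[
\map P(x):=\int_{-\infty}^x f(y)\,\dd y
\]
and verify the four standard properties required of a cumulative distribution function: (i) monotonicity, which follows from $f\ge0$; (ii) $\lim_{x\to-\infty}\map P(x)=0$ and (iii) $\lim_{x\to+\infty}\map P(x)=1$, both by dominated convergence using $f\in L^1(\sset R)$; (iv) absolute continuity of $\map P$, which is a standard property of indefinite Lebesgue integrals. Absolute continuity in particular implies right-continuity, so $\map P$ qualifies as a bona fide CDF, and on pulling back through \prettyref{def:contdist} with $\mu$ the Lebesgue measure, the associated random variable has $f$ as its Radon--Nikodym derivative.

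The hardest part, if anything, will simply be notational: the corollary is stated tersely and the implicit nonnegativity of $f$ (needed for it to be a pdf) should be made explicit, and one should be careful to invoke piecewise continuity only where it is used---namely to guarantee that $f$ is Lebesgue measurable and Riemann-integrable on bounded intervals, which is a convenient but inessential strengthening of the mere $L^1$ condition. The substantive content of the corollary is therefore a bookkeeping exercise assembling Lebesgue's differentiation theorem, the absolute continuity of the indefinite integral, and dominated convergence.
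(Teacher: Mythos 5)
Your argument is correct, and it coincides with what the paper itself relies on: the corollary is stated without an explicit proof, as a direct consequence of the preceding remark (absolute continuity of $\map P$) together with the standard real-analysis facts you cite (Lebesgue differentiation / the fundamental theorem of calculus for Lebesgue integrals, and absolute continuity of the indefinite integral plus the limit checks for the converse direction). Your added remarks on the implicit nonnegativity of $f$ and on where piecewise continuity is actually used are sensible clarifications rather than deviations from the paper's route.
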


\subsubsection{Divisibility and Decomposability of Distributions}
To make the terms mentioned in the introduction rigorous, note the two following definitions.
\begin{definition}\label{def:dec}
	Let $X$ be a random variable. It is said to be $n$-\emph{decomposable} if $X=Z_1+\ldots+Z_n$ for some $n\in\sset N$, where $Z_1,\ldots,Z_n$ are independent non-constant random variables. $X$ is said to be \emph{indecomposable} if it is not decomposable.
\end{definition}
\begin{definition}\label{def:div}
	Let $X$ be a random variable. It is said to be $n$-\emph{divisible} if it is $n$-decomposable as $X=\sum_{i=1}^nZ_i$ and $Z_i\overset d=Z_j\ \forall i,j$. $X$ is said to be \emph{infinitely} divisible if $X=\sum_{i=1}^\infty Z_i$, with $Z_i\sim\mathfrak D$ for some nontrivial distribution $\mathfrak D$.
\end{definition}
If we are not interested in the exact number of terms, we also simply speak of \emph{decomposable} and \emph{divisible}. We will show in \prettyref{sec:completedec} that---in contrast to divisibility---the question of decomposability into more than two terms is not well-motivated.

Observe the following extension of \prettyref{rem:aligned}.
\begin{lemma}\label{lem:aligned2}
	Let $\mathfrak D$ be a discrete probability distribution with pmf $\map p$. If $\map p$ obeys \prettyref{rem:aligned}, then we can assume that its factors do as well. In the continuous case, we can without loss of generality assume the same.
\end{lemma}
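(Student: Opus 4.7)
The plan is to use one elementary fact: for non-negative measures on $\sset Z$ or $\sset R$, the support of a convolution satisfies $\supp(\mu\ast\nu) = \overline{\supp\mu+\supp\nu}$. Since every decomposition $X=Y+Z$ of independent random variables corresponds to a convolution identity on the pmfs (and analogously on pdfs in the continuous case), this single identity will carry the entire proof.

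First I would show that if $\supp X$ is bounded below---which is guaranteed by \prettyref{rem:aligned}, as $\supp\map p_X\subseteq\sset N$---then so are the supports of its factors. Indeed, if $\supp Y$ were unbounded below, then for any fixed $z_0\in\supp Z$ the translate $\supp Y+\{z_0\}$ would already be unbounded below, forcing the same on $\supp X$ by the support identity above. Hence $a:=\inf\supp Y$ and $b:=\inf\supp Z$ are finite real numbers, and combined with $\inf\supp X=0$ we obtain $a+b=0$. The joint reparametrisation $(Y,Z)\mapsto(Y-a,\,Z+a)$ then preserves the sum and places both infima at $0$. In the discrete case, the infimum of an integer-valued bounded-below support is attained, so the shifted variables automatically have strictly positive mass at $0$, matching \prettyref{rem:aligned}. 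For divisibility $X=\sum_{i=1}^n Y_i$ with $Y_i\overset d=Y_1$, the same identity yields $n\cdot\inf\supp Y_1=0$, forcing $\inf\supp Y_1=0$ outright, so no shift is needed and the common factor is already aligned.

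The continuous case proceeds by the same argument verbatim: the convolution support identity still holds (the closure on the right does not affect the infimum), and shifting the pdfs of $Y$ and $Z$ by $\mp a$ lands both supports on $[0,\infty)$. I do not foresee any real obstacle; the only bit of care needed is to phrase the ``aligned'' property for a continuous distribution as $\inf\supp f=0$ in lieu of the pointwise condition $\map p(0)\neq 0$, after which the decomposability and divisibility arguments transfer without change.
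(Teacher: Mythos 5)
Your proposal is correct and follows essentially the same route as the paper: the paper's one-line argument (``positivity of convolutions'' for divisibility, ``shifting the terms symmetrically'' for decomposability) is exactly what you spell out via the support identity $\supp(\mu\ast\nu)=\overline{\supp\mu+\supp\nu}$, the resulting relation $\inf\supp Y+\inf\supp Z=0$, and the opposite shift $(Y,Z)\mapsto(Y-a,Z+a)$, with the divisible case needing no shift at all. The only extra care you add---attainment of the infimum on a lattice and phrasing alignment as $\inf\supp f=0$ in the continuous case---is consistent with the paper's setting.
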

\begin{proof}
	Obvious from positivity of convolutions in case of divisibility. For decomposability, we can reach this by shifting the terms symmetrically.
\end{proof}

\subsubsection{Markov Chains}\label{sec:mkv}
To establish notation, we briefly state some well-known properties of \knownth{Markov} chains
\begin{remark}\label{rem:mkv}
	Take discrete iid random variables $Y_1,\ldots,Y_n\sim\mathfrak D$ and write $\p(Y_i=k)=p_k:=\map p(k)$ for all $k\in\sset N$, independent of $i=1,\ldots,n$. Define further
	\[X_i:=\begin{cases}
	Y_1+\ldots+Y_i & i>0 \\ 0 &\text{otherwise}\point
	\end{cases}\]
	Then $\{X_n,n\ge0\}$ defines a discrete-time \knownth{Markov chain}, since
	\begin{align*}\p(X_{n+1}=k_{n+1}|X_0=k_0\land\ldots\land X_n=k_n)&=\p(Y_{n+1}=k_{n+1}-k_n)\\
	&\equiv p_{k_{n+1}-k_n}\point
	\end{align*}
\end{remark}
This last property is also called stationary independent increments (iid).

\begin{remark}\label{rem:transmat}
	Let the notation be as in \prettyref{rem:mkv}. The transition probabilities of the Markov chain are then given by
	\[P_{ij}:=\begin{cases}
	p_{j-i} & j\ge i \\ 0&\text{otherwise}\point
	\end{cases}\]
	In matrix form, we write the \emph{transition matrix}
	\[\op P:=\begin{pmatrix}
	p_0 & p_1 & p_2 & \cdots \\
	& p_0 & p_1 & \cdots \\
	& & p_0 & \cdots \\
	& & & \ddots
	\end{pmatrix}\point\]
\end{remark}

Working with transition matrices is straightforward---if the initial distribution is given by $\pi:=(1,0,\ldots)$, then obviously $(\pi\op P)_i=p_i$.
Iterating $\op P$ then yields the distributions of $X_2,X_3,\ldots$, respectively---e.g. $(\pi\op P^2)_i=\p(X_2=i)\equiv\p(Y_1+Y_2=i)$.

We know that $X_2$ is divisible---namely into $X_2=Y_1+Y_2$, by construction---but what if we ask this question the other way round?
We will show in the next section that there exists a relatively straightforward way to calculate if an (infinite) matrix in the shape of $\op P$ has a stochastic root---i.e. if $\mathfrak D$ is divisible. Observe that this is not in contradiction with \prettyref{th:stochhard}, as the theorem does not apply to infinite operators.

In contrast, the more general question of whether we can write a finite discrete random variable as a sum of nontrivial, potentially distinct random variables will be shown to be \np-hard.

\subsection{Equivalence to Polynomial Factorization}\label{sec:polyfac}
Starting from our digression in \prettyref{sec:mkv} and using the same notation, we begin with the following definition.
\begin{definition}\label{def:charpoly}
Denote with $\op S$ the shift matrix $S_{ij}:=\delta_{i+1,j}$. Then we can write
\[\op P=p_0\1+p_1\op S+p_2\op S^2+\ldots=\sum_{i=0}^\infty p_i\op S^i\in\sset R_{[0,1]}[\op S]\point\]
Since $\op S$ just acts as a symbol, we write
\[ f_\mathfrak D(x):=\sum_{i=0}^Np_ix^i\in\mathcal R\quad\text{where}\quad\mathcal R:=\modfrac{\sset R_{\ge0}[x]}{\sim}\comma \]
and $f\sim g:\Leftrightarrow f=cg, c>0$. We call $f_\mathfrak D$ the \emph{characteristic polynomial} of $\mathfrak D$---not to be confused with the characteristic polynomial of a matrix. The equivalence space $\mathcal R$ defines the set of all characteristic polynomials, and can be written as
\[\mathcal R=\bigcup_{i=n}^\infty\mathcal R_i\mt{where}\mathcal R_n:=\modfrac{\mathcal R}{(x^n)}\point\]
\end{definition}
We mod out the overall scaling in order to keep the normalization condition $\sum_k\map p(k)=1$ implicit---if we write $f_\mathfrak D$, we will always assume $f_\mathfrak D(1)=1$. An alternative way to define these characteristic polynomials is via characteristic functions, as given in \prettyref{def:char}.
\begin{definition}\label{def:charpolequiv}
$f_\mathfrak D(\ee^{\ii\omega})=\phi_X(\omega)$.
\end{definition}

The reason for this definition is that it allows us to reduce operations on the transition matrix $\op P$ or products of characteristic functions $\phi_X$ to algebraic operations on $f_\mathfrak D$. This enables us to translate the divisibility problem into a polynomial factorization problem and use algebraic methods to answer it. Because we will make use of it later, we also observe the following.
\begin{definition}\label{def:charpolynorm}
We define norms on the space of characteristic polynomials of degree $N$---$\mathcal R_N$---via $\|f_\mathfrak D\|_{N,p}:=\|(p_i)_{1\le i\le N}\|_{\ell^p}$. If $N$ is not explicitly specified, we usually assume $N=\deg f_\mathfrak D$.
\end{definition}

First note the following proposition.
\begin{proposition}
There is a $1$-to-$1$ correspondence between finite distributions $\mathfrak D$ and characteristic polynomials $f_\mathfrak D$, as defined in \prettyref{def:charpoly}.
\end{proposition}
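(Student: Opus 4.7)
The plan is to verify that the map $\Phi: \mathfrak D \longmapsto f_\mathfrak D$ defined by $\Phi(\mathfrak D)(x) := \sum_{i=0}^N \map p(i)\, x^i$ is well-defined and bijective onto $\mathcal R$, given the normalization convention $f_\mathfrak D(1) = 1$ and the alignment convention from \prettyref{rem:aligned} (i.e.\ $\map p(0) \neq 0$ and $\map p(k) = 0$ for $k < 0$). The proof naturally splits into three routine steps, none of which I expect to be a genuine obstacle.

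First I would check well-definedness: given a finite distribution $\mathfrak D$ with pmf $\map p$ supported on $\{0,\ldots,N\}$, the coefficients $p_i = \map p(i)$ are nonnegative and satisfy $\sum_i p_i = 1$, so $\Phi(\mathfrak D) \in \sset R_{\ge 0}[x]$, and under the equivalence $\sim$ of \prettyref{def:charpoly} it defines a class in $\mathcal R$. The alignment convention ensures that $\deg f_\mathfrak D = N$ and that $f_\mathfrak D(0) > 0$, so no ambiguity arises from shifting the origin.

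Next I would establish injectivity. Suppose $\Phi(\mathfrak D) = \Phi(\mathfrak D')$ in $\mathcal R$, i.e.\ $f_\mathfrak D = c\, f_{\mathfrak D'}$ for some $c > 0$. Evaluating at $x = 1$ and using the normalization $f_\mathfrak D(1) = f_{\mathfrak D'}(1) = 1$ forces $c = 1$. Matching coefficients then gives $\map p \equiv \map p'$, so $\mathfrak D = \mathfrak D'$.

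For surjectivity, given any class $[g] \in \mathcal R$ with representative $g(x) = \sum_{i=0}^N g_i x^i$, $g_i \ge 0$, and $g$ not identically zero, the evaluation $g(1) = \sum_i g_i > 0$ is strictly positive. Setting $\tilde g := g/g(1)$ yields a canonical representative with $\tilde g(1) = 1$; reading off the coefficients $p_i := \tilde g_i$ produces a valid pmf of a finite distribution $\mathfrak D$ with $\Phi(\mathfrak D) = [g]$. The alignment convention can be enforced by a translation that removes any leading zero coefficients, which does not alter the equivalence class in $\mathcal R$ once we work modulo origin shifts (which \prettyref{rem:aligned} identifies).

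The only subtlety worth flagging is the interplay between the equivalence relation $\sim$ (positive rescaling) and the normalization $f_\mathfrak D(1) = 1$: these together pick out a unique representative from each class, which is exactly what makes $\Phi$ a bijection rather than merely a surjection. Once this is spelled out, the correspondence is immediate.
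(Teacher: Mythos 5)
Your argument is correct, but it takes a different route from the paper. The paper's proof is a one-liner: it invokes \prettyref{def:charpolequiv}, i.e.\ the identity $f_\mathfrak D(\ee^{\ii\omega})=\phi_X(\omega)$, and then appeals to the classical uniqueness theorem for characteristic functions to conclude that distinct finite distributions cannot share a characteristic polynomial. You instead verify the bijection directly at the level of coefficients: well-definedness of $\mathfrak D\mapsto[f_\mathfrak D]\in\mathcal R$, injectivity by evaluating at $x=1$ to kill the scaling constant $c$ and then matching coefficients, and surjectivity by normalizing an arbitrary nonnegative polynomial by $g(1)>0$ to read off a pmf. Your version is more elementary and self-contained (no Fourier-analytic input is needed for finite discrete distributions, since coefficient comparison already does the job), and it makes explicit the one point the paper leaves implicit, namely how the quotient by positive rescaling in $\mathcal R$ interacts with the convention $f_\mathfrak D(1)=1$ to single out a unique representative. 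The paper's appeal to characteristic functions buys brevity and situates the correspondence inside standard probability machinery, which is consistent with how \prettyref{def:charpolequiv} is used elsewhere. One small caution in your surjectivity step: the relation $\sim$ of \prettyref{def:charpoly} mods out only positive scalar multiples, not factors of $x$, so dividing out a vanishing constant coefficient does change the class in $\mathcal R$; the correct reading is that such a class corresponds to a finite distribution with $\map p(0)=0$, or, after the origin shift of \prettyref{rem:aligned}, to the aligned distribution paired with the polynomial of nonzero constant term. This does not affect the validity of the bijection, but the phrase ``does not alter the equivalence class'' should be amended accordingly.
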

\begin{proof}
Clear by \prettyref{def:charpolequiv} and the uniqueness of characteristic functions.
\end{proof}
While this might seem obvious, it is worth clarifying, since this correspondence will allow us to directly translate results on polynomials to distributions.

The following lemma reduces the question of divisibility and decomposability---see \prettyref{def:dec} and \ref{def:div}---to polynomial factorization.
\begin{lemma}\label{lem:divtopoly}
  A finite discrete distribution $\mathfrak D$ is $n$-divisible iff there exists a polynomial $g\in\mathcal R$ such that $g^n=f_\mathfrak D$. $\mathfrak D$ is $n$-decomposable iff there exist polynomials $g_1,\ldots,g_n\in\mathcal R$ such that $\prod_{i=1}^ng_i=f_\mathfrak{D}$.
\end{lemma}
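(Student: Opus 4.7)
The plan is to exploit the standard fact that for two independent discrete random variables $Y, Z$ with pmfs $p_Y, p_Z$, the pmf of their sum is the convolution $(p_Y * p_Z)(k) = \sum_{j} p_Y(j)\, p_Z(k-j)$. First I would observe that this convolution is precisely the rule for multiplying two polynomials in $\mathcal R$: collecting the coefficient of $x^k$ in $f_Y(x)\, f_Z(x) = \bigl(\sum_i p_Y(i) x^i\bigr)\bigl(\sum_j p_Z(j) x^j\bigr)$ gives $\sum_{i+j=k} p_Y(i)\, p_Z(j) = \p(Y+Z=k)$. Hence $f_{Y+Z} = f_Y\, f_Z$ as an honest equality once representatives with $f_Y(1) = f_Z(1) = 1$ are fixed. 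This is the single algebraic identity that drives both directions of the lemma, and it can equivalently be read off from \prettyref{def:charpolequiv} together with the multiplicativity of characteristic functions of independent sums.

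Forward direction. If $\mathfrak D$ is $n$-decomposable as $X = Z_1 + \cdots + Z_n$ with independent non-constant $Z_i$, iterating the identity above yields $f_\mathfrak D = \prod_{i=1}^n f_{Z_i}$, so $g_i := f_{Z_i} \in \mathcal R$ supplies the required factorization. In the divisibility case the $Z_i$ are additionally iid, so all $f_{Z_i}$ coincide and $f_\mathfrak D = g^n$ with $g := f_{Z_1}$.

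Reverse direction. Starting from $f_\mathfrak D = \prod g_i$ with $g_i \in \mathcal R$, I would pick normalized representatives $\tilde g_i := g_i / g_i(1)$; this is well-defined because every $g_i$ has nonnegative coefficients not all zero, so $g_i(1) > 0$. Evaluating at $x = 1$ gives $\prod g_i(1) = f_\mathfrak D(1) = 1$, so $\prod \tilde g_i = f_\mathfrak D$ as honest polynomials. Each $\tilde g_i$ then has nonnegative coefficients summing to $1$ and therefore defines a valid discrete pmf on $\sset N$; writing $Z_i$ for the corresponding independent random variables, one has $f_{Z_1 + \cdots + Z_n} = \prod \tilde g_i = f_\mathfrak D$. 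By the one-to-one correspondence between finite distributions and their characteristic polynomials just established, this forces $X \overset{d}{=} Z_1 + \cdots + Z_n$. Coinciding $g_i$ yield coinciding $\tilde g_i$ and hence iid $Z_i$, handling the divisibility case.

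The only subtleties I would need to address are the non-constancy clause of Definitions \ref{def:dec}--\ref{def:div}, which translates to $\deg g_i \ge 1$ on the polynomial side (a constant factor in $\mathcal R$ is absorbed by the equivalence $\sim$, so excluding it costs nothing), and the alignment convention of \prettyref{rem:aligned}, which is preserved for the factors by \prettyref{lem:aligned2}. I do not anticipate a real obstacle here: the heart of the argument is simply the observation that convolution of pmfs equals multiplication of the associated generating polynomials, after which everything reduces to bookkeeping about the normalization and the quotient structure of $\mathcal R$.
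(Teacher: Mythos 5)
Your proposal is correct and takes essentially the same route as the paper: both proofs rest on the single identity that convolution of pmfs of independent summands corresponds to multiplication of the characteristic polynomials, with normalization of the factors handled automatically by evaluation at $x=1$. The paper merely dresses this identity in the transition-matrix language of \prettyref{rem:transmat}---writing $\op Q$ as a polynomial in the shift matrix $\op S$ so that $g_{\mathfrak D'}^n(\op S)\equiv f_\mathfrak D(\op S)$---whereas you invoke the generating-function form directly and spell out the reverse direction, which the paper dismisses as ``similar''.
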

\begin{proof}
  Assume that $\mathfrak D$ is $n$-divisible, i.e. that there exists a distribution $\mathfrak D'$ and random variables $Z_1,\ldots,Z_n\sim\mathfrak D'$ such that $X=\sum_{i=1}^nZ_i$. Denote with $\op Q$ the transition matrix of $\mathfrak D'$, as defined in \prettyref{rem:transmat}, and write $\map q$ for its probability mass function.
  Then
  \[\p(X=j)=\p\left(\sum_{i=1}^nZ_i=j\right)=(\op Q^n\pi)_j\comma\]
  as before. Write $g_{\mathfrak D'}$ for the characteristic polynomial of $\mathfrak D'$. By \prettyref{def:charpoly}, $g^n(\op S)\equiv f_\mathfrak D(\op S)$, and hence $g_{\mathfrak D'}^n=f_\mathfrak D$.  Observe that
  \[1=\sum_i \map p(i)=f_\mathfrak D(1)\equiv g_{\mathfrak D'}^n(1)=\left(\sum_i \map q(i)\right)^n\comma\]
  and hence $\sum_i \map q(i)=1$ is normalized automatically.

  The other direction is similar, as well as the case of decomposability, and the claim follows.
\end{proof}

\subsection{Divisibility}\label{sec:div}
\subsubsection{Computational Problems}
We state an exact variant of the computational formulation of the question according to \prettyref{def:div}---i.e. one with an allowed margin of error---as well as a weak membership formulation.
\begin{definition}[\textsc{Distribution Divisibility$_n$}]\label{def:fdddiv}\leavevmode
\begin{description}
\item[Instance.] Finite discrete random variable $X\sim\mathfrak D$.
\item[Question.] Does there exist a finite discrete distribution $\mathfrak D': X=\sum_{i=1}^nZ_i$ for random variables $Z_i\sim\mathfrak D'$?
\end{description}
\end{definition}
Observe that this includes the case $n=2$, which we defined in \prettyref{def:div}.

\begin{definition}[\textsc{Weak Distribution Divisibility}$_{n,\epsilon}$]\label{def:wfdddiv}\leavevmode
\begin{description}
\item[Instance.] Finite discrete random variable $X\sim\mathfrak D$ with pmf $\map p_X(k)$.
\item[Question.] If there exists a finite discrete random variable $Y$ with pmf $\map p_Y(k)$, such that $\|\map p_X-\map p_Y\|_\infty<\epsilon$ and such that
\begin{enumerate}
\item $Y$ is $n$-divisible---return \textsc{Yes}
\item $Y$ is not $n$-divisiblie---return \textsc{No}.
\end{enumerate}
\end{description}
\end{definition}

\subsubsection{Exact Divisibility}
\begin{theorem}\label{th:rootp}
\textsc{Distribution Divisibility}$_n\in\set P$.
\end{theorem}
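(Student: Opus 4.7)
The plan is to reduce \textsc{Distribution Divisibility}$_n$ to computing a formal power-series $n$-th root of the characteristic polynomial. By \prettyref{lem:divtopoly}, an instance is a \yes iff $f_\mathfrak D(x) = \sum_{i=0}^N p_i x^i$ admits a factorization $g^n = f_\mathfrak D$ with $g\in\sset R_{\geq 0}[x]$; by \prettyref{lem:aligned2} I may further assume $p_0>0$, so any such $g$ satisfies $g(0)>0$ and is uniquely determined as the principal real $n$-th root of $f_\mathfrak D$ as a formal power series.

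First I would check that $n\mid N$; if not, output \no, since $\deg g = N/n$ must be an integer. Otherwise set $M=N/n$, and compute the coefficients $q_0,\ldots,q_M$ of the candidate $g(x)=\sum_{k=0}^M q_k x^k$ by the Newton-style recursion obtained from matching coefficients of $x^k$ on both sides of $g^n=f_\mathfrak D$, namely $q_0 = p_0^{1/n}$ and, for $1\leq k\leq M$,
\[
q_k \;=\; \frac{1}{n\,q_0^{\,n-1}} \Biggl( p_k \;-\; \sum_{\substack{j_1+\ldots+j_n=k \\ 0\leq j_i<k}} q_{j_1} q_{j_2} \cdots q_{j_n} \Biggr).
\]
By induction every $q_k$ lies in the algebraic number field $\sset K := \sset Q(p_0^{1/n})$, which has degree at most $n$ over $\sset Q$ and admits exact polynomial-time arithmetic on elements represented as rational vectors of length $n$. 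Finally I would verify (i) that $g^n = f_\mathfrak D$ as polynomials---equivalently, that the coefficients of $x^{M+1},\ldots,x^{Mn}$ in $g^n$ all vanish and the lower ones coincide with $p_0,\ldots,p_N$---and (ii) that $q_k \geq 0$ for every $k$; output \yes iff both tests succeed. Correctness is immediate from the uniqueness of the principal root: if any nonnegative-coefficient $g$ with $g^n=f_\mathfrak D$ exists at all, it coincides with the one produced by the recursion, so the tests pass; conversely, if both tests pass then $g\in\sset R_{\geq 0}[x]$ witnesses divisibility.

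The hard part will be controlling the bit complexity throughout. Two subtleties must be addressed: first, the coefficients $q_k\in\sset K$ must have polynomially-bounded bit length, which follows from standard height bounds of Mignotte type applied to the unique polynomial $g$ whose $n$-th power is $f_\mathfrak D$; second, each sign test on $q_k$ amounts to isolating a nonzero real algebraic number of degree $\leq n$ from zero, which can be done in polynomial time by evaluating its (known) defining polynomial at sufficiently many bits of precision. Both are well-known but deserve explicit verification. Given these, Steps 1--3 run in polynomial time in $|f_\mathfrak D|$, establishing the theorem.
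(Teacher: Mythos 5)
Your proposal is correct and follows essentially the same route as the paper: reduce to polynomial factorization via \prettyref{lem:divtopoly}, compute the truncated power-series $n$-th root of $f_\mathfrak D$ (the paper phrases this as a Taylor expansion with remainder), and accept iff it reproduces $f_\mathfrak D$ exactly with nonnegative coefficients. Your explicit handling of exact arithmetic in $\sset Q(p_0^{1/n})$, the coefficient height bounds, and the sign tests merely fills in the bit-complexity details the paper delegates to the cited constructive series-expansion algorithm (and to its alternative LLL-factorization argument).
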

\begin{proof}
  By \prettyref{lem:divtopoly} it is enough to show that for a characteristic polynomial $f\in\mathcal R_N$, we can find a $g\in\mathcal R:g^n=f$ in polynomial time. In order to achieve this, write $(f)^{1/n}$ as a Taylor expansion with rest, i.e.
  \[
    \sqrt[n]{f(x)}=p(x)+R(x)\mt{where}p\in\mathcal R_{N/n}, R\in\mathcal R_N\point
  \]
  If $R\equiv0$, then $g=p$ $n$-divides $f$, and then the distribution described by $f$ is $n$-divisible. Since the series expansion is constructive and can be done efficiently---see \cite{Muller1987}---the claim follows.

  If the distribution coefficients are rational numbers, another method is to completely factorize the polynomial---e.g. using the \knownth{LLL} algorithm, which is known to be easy in this setting---sort and recombine the linear factors, which is also in $\map O(\map{poly}(\map{ord} f))$, see for example \cite{Hart2011}. Then check if all the polynomial root coefficients are positive.
\end{proof}

We collect some further facts before we move on.
\begin{remark}
Let $\map p$ be the probability mass function for a finite discrete distribution $\mathfrak D$, and write $\supp\map p=\{k:\map p(k)\ne0\}$. If $\max\supp\map p-\min\supp\map p=:w$, then $\mathfrak D$ is obviously not $n$-divisible for  $n>w/2$, and furthermore not for any $n$ that do not divide $w,n<w/2$. Indeed, $\mathfrak D$ is not $n$-divisible if the latter condition holds for \emph{either} $\max\supp\map p$ or $\min\supp\map p$.
\end{remark}
\begin{remark}
Let $X\sim\mathfrak D$ be an $n$-divisible random variable, i.e. $\exists Z_1,\ldots,Z_n\sim\mathfrak D':\sum_{i=1}^n Z_i=X$. Then $\mathfrak D'$ is unique.
\end{remark}
\begin{proof}
This is clear, because $\sset R[x]$ is a unique factorization domain.
\end{proof}

\subsubsection{Divisibility with Variation}
As an intermediate step, we need to extend \prettyref{th:rootp} to allow for a margin of error $\epsilon$, as captured by the following definition.
\begin{definition}[\textsc{Distribution Divisibility}$_{n,\epsilon}$]\label{def:fdddive}\leavevmode
	\begin{description}
		\item[Instance.] Finite discrete random variable $X\sim\mathfrak D$ with pmf $\map p_X(k)$.
		\item[Question.] Do there exist finite random variables $Z_1,\ldots, Z_n\sim\mathfrak D'$ with pmfs $\map p_Z(k)$, such that $\|\underbrace{\map p_Z\ast\ldots\ast\map p_Z}_\text{$n$ times}-\map p_X\|_\infty<\epsilon$?
	\end{description}
\end{definition}

\begin{lemma}\label{lem:fddwd}
\textsc{Distribution Divisibility}$_{n,\epsilon}$ is in \pp.
\end{lemma}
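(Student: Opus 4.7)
The plan is to extend the Taylor-expansion algorithm of \prettyref{th:rootp} to the $\epsilon$-tolerant setting. By passing to characteristic polynomials (\prettyref{def:charpoly}, \prettyref{lem:divtopoly}), the problem becomes: given $f = f_\mathfrak{D}$ of degree $N$ with $f_0 > 0$ (permissible by \prettyref{rem:aligned}) and $\epsilon > 0$, decide whether there exists a nonnegative polynomial $g \in \mathcal R_M$ with $M = \lfloor N/n \rfloor$ such that $\|g^n - f\|_\infty < \epsilon$ in the max-coefficient norm of \prettyref{def:charpolynorm}.

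The algorithm itself. Compute the first $M+1$ coefficients of the formal $n$-th root power series $\sqrt[n]{f} = \sum_{i \ge 0} \hat g_i x^i$ via the standard recursion $\hat g_0 = f_0^{1/n}$ and $n \hat g_0^{n-1} \hat g_k = f_k - P_k(\hat g_0, \ldots, \hat g_{k-1})$ for explicit polynomials $P_k$; this takes polynomial time using efficient power-series arithmetic \cite{Muller1987} with a rational approximation to $f_0^{1/n}$ of polynomially many bits. Clip each negative coefficient $\hat g_i$ to zero to obtain $\tilde g$, renormalize so $\tilde g(1) = 1$, and output \yes iff $\|\tilde g^n - f\|_\infty < \epsilon$. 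Soundness is immediate, since $\tilde g$ itself exhibits the required $\mathfrak D'$.

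For completeness, suppose a valid witness $g \in \mathcal R_M$ exists with $\|g^n - f\|_\infty < \epsilon$, and set $f' := g^n$. Then $\sqrt[n]{f'} = g$ exactly, a polynomial of degree $M$. The $k$-th Taylor coefficient of $\sqrt[n]{f}$ is a rational function of $f_0, \ldots, f_k$ with denominator a power of $f_0^{1/n}$, hence Lipschitz continuous in the coefficients of $f$; thus $\hat g$ lies coefficient-wise close to $g$. Since $g$ already belongs to the nonnegative, normalized simplex, the clip-and-renormalize step only moves $\tilde g$ closer to $g$. Finally, $p \mapsto p^n$ is Lipschitz on the simplex with a polynomial constant, yielding $\|\tilde g^n - f\|_\infty \le \|\tilde g^n - g^n\|_\infty + \|g^n - f\|_\infty < \epsilon$, once the power-series computation is run at sufficient (but polynomial) internal precision.

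The main obstacle is precision tracking. The Lipschitz constants driving (i) the Taylor recursion, (ii) the clip/renormalize step, and (iii) the map $p \mapsto p^n$ depend inversely on $f_0$ and polynomially on $N$ and $n$. Since $\log(1/f_0)$, $N$, and $n$ are all bounded by the input size, the required internal bit-precision remains polynomial, and the whole algorithm runs in polynomial time. The edge case $f_0 \lesssim \epsilon$ is handled by a straightforward shift argument combined with the exact algorithm of \prettyref{th:rootp} applied to a slightly perturbed instance of strictly smaller effective degree.
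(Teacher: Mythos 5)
Your algorithm is sound but not complete, and the completeness step is where the argument breaks. You compute a \emph{single} candidate $\tilde g$ (the truncated Taylor $n$-th root of $f$, clipped and renormalized) and accept iff $\|\tilde g^n-f\|_\infty<\epsilon$. If a witness $g$ exists with $\|g^n-f\|_\infty<\epsilon$, your chain of estimates ends with $\|\tilde g^n-f\|_\infty\le\|\tilde g^n-g^n\|_\infty+\|g^n-f\|_\infty<\epsilon$, but this last inequality does not follow: the first summand is not zero, and the hypothesis only gives the second summand $<\epsilon$ with no quantitative slack. The discrepancy $\|\tilde g - g\|$ is of order (Lipschitz constant)$\,\times\,\epsilon$, because $\hat g$ is the Taylor root of $f$ while $g$ is the Taylor root of the \emph{different} polynomial $f'=g^n$, and $\|f-f'\|_\infty$ can be as large as (almost) $\epsilon$. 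This is an input-level perturbation, not a rounding error, so it cannot be shrunk by running the power-series arithmetic at higher internal precision. Worse, the recursion $n\hat g_0^{\,n-1}\hat g_k=f_k-P_k(\hat g_0,\ldots,\hat g_{k-1})$ amplifies this perturbation by factors of order $1/(n f_0^{(n-1)/n})$ at every step and compounds them across $N$ steps, so the amplification can be exponential in the input size (only $\log(1/f_0)$ is polynomially bounded, not $1/f_0$). Hence on instances where a valid $\mathfrak D'$ exists but the Taylor root of $f$ has a sizeable negative coefficient, your algorithm can return \no{} incorrectly. (A secondary issue: clipping negative coefficients to $0$ is coordinate-wise non-expansive towards $g$, but the subsequent renormalization $\tilde g\mapsto\tilde g/\tilde g(1)$ need not move the point closer to $g$, so that step of the argument is also unjustified.)

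The paper avoids the single-candidate trap by deciding feasibility of the whole system of coefficient constraints. Writing $g(x)=\sum_i a_ix^i$ and comparing coefficients of $g^n$ with $f$ up to $\epsilon$, each constraint has the form $\map h_i(a_1,\ldots,a_{i-1})+na_0^{n-i}a_i\in(p_i-\epsilon,p_i+\epsilon)$ with $\map h_i\ge0$ monotonic, so one can solve for $a_i$ and propagate an \emph{interval} $\set I_i$ of admissible values, iterating over $i$ and answering \yes{} iff no interval becomes empty. This interval propagation quantifies over all candidate roots simultaneously, which is exactly what the existence question requires; your approach would need an analogous feasibility argument (or a proof that the Taylor-root candidate is within a \emph{controlled} distance that can be absorbed into the tolerance, which the strict inequality in the problem statement does not provide) to be repaired.
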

\begin{proof}
Let $f(x)=\sum_{i=0}^Np_ix^i$ be the characteristic polynomial of a finite discrete distribution, and $\epsilon>0$. By padding the distribution with $0$s, we can assume without loss of generality that $N=\deg f$ is a multiple of $n$.
A polynomial root---if it exists---has the form $g(x)=\sum_{i=0}^Na_ix^i$, where $a_i\ge0\ \forall i$. Then
\begin{align*}
g(x)^n&=(\ldots+a_3 x^3+a_2 x^2+a_1 x+a_0)^n\\
&=\ldots+((n-1)a_1^2+n a_0^{n-2} a_2) x^2+n a_0^{n-1} a_1 x+a_0^n\point
\end{align*}
Comparing coefficients in the divisibility condition $f(x)=g(x)^n$, the latter translates to the set of inequalities
\begin{align*}
a_0^n&\in(p_0-\epsilon,p_0+\epsilon)\\
n a_0^{n-1} a_1&\in(p_1-\epsilon,p_1+\epsilon)\\
(n-1)a_1^2+n a_0^{n-2} a_2&\in(p_2-\epsilon,p_2+\epsilon)\\
&\ \,\vdots
\end{align*}
Each term but the first one is of the form $\map h_i(a_1,\ldots,a_{i-1})+na_0^{n-i}a_i\in(p_i-\epsilon,p_i+\epsilon)$, where $\map h_i\ge0\ \forall i$ is monotonic. This can be rewritten as $a_i\in\set U_{\epsilon/na_0^{n-i}}((p-\map h_i(a_1,\ldots,a_{i-1}))/na_0^{n-i})$. It is now easy to solve the system iteratively, keeping track of the allowed intervals $\set I_i$ for the $a_i$.

If $\set I_i=\emptyset$ for some $i$, we return \textsc{No}, otherwise \textsc{Yes}. We have thus developed an efficient algorithm to answer \textsc{Distribution Weak Divisibility}$_{n,\epsilon}$, and the claim of \prettyref{lem:fddwd} follows.
\end{proof}

\begin{remark}
	Given a random variable $X$, the algorithm constructed in the proof of \prettyref{lem:fddwd} allows us to calculate the closest $n$-divisible distribution to $X$ in polynomial time.
\end{remark}
\begin{proof}
	Straightforward, e.g. by using binary search over $\epsilon$.
\end{proof}

\subsubsection{Weak Divisibility}\label{sec:weakdiv}
For the weak membership problem, we reduce \textsc{Weak Distribution Divisibility}$_{n,\epsilon}$ to \textsc{Distribution Divisibility}$_{n,\epsilon}$.

\begin{theorem}\label{th:weakdiv}
\textsc{Weak Distribution Divisibility}$_{n,\epsilon}\in\pp$.
\end{theorem}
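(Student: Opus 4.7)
The plan is to reduce \textsc{Weak Distribution Divisibility}$_{n,\epsilon}$ directly to \textsc{Distribution Divisibility}$_{n,\epsilon}$, which is in \pp by \prettyref{lem:fddwd}. The observation is that the former is essentially a promise-problem version of the latter: it asks whether $X$ lies within sup-distance $\epsilon$ of a divisible distribution (\yes), or within sup-distance $\epsilon$ of an indivisible one (\no), where the promise is that at least one of these holds.

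Given an instance $(X,\epsilon)$, the algorithm I would use is: invoke the polynomial-time routine from the proof of \prettyref{lem:fddwd} to decide whether there exist $Z_1,\ldots,Z_n\sim\mathfrak D'$ whose $n$-fold convolution satisfies $\|\map p_Z^{\ast n}-\map p_X\|_\infty<\epsilon$. If it returns \yes, output \yes: then $Y:=\sum_{i=1}^n Z_i$ is $n$-divisible and $\epsilon$-close to $X$, which is exactly a witness for the first clause of \prettyref{def:wfdddiv}. If it returns \no, output \no.

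The correctness of the \no branch is the only point to check. Note that $X$ itself trivially satisfies $\|\map p_X-\map p_X\|_\infty=0<\epsilon$, so $X$ is always $\epsilon$-close to itself. If no $n$-divisible distribution lies within $\epsilon$ of $X$, then in particular $X$ is not $n$-divisible, and $Y:=X$ serves as the required $\epsilon$-close non-divisible witness for the \no clause of \prettyref{def:wfdddiv}. Since the promise of \prettyref{def:wfdddiv} tolerates either answer when both a divisible and an indivisible $\epsilon$-close distribution exist, this dichotomy exhausts the cases.

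There is essentially no obstacle beyond spelling out this promise-problem reduction carefully; all of the real work — turning the polynomial identity $f(x)=g(x)^n$ into an iterative system of interval constraints on the coefficients $a_i$ of $g$ — has already been handled in \prettyref{lem:fddwd}. The only conceptual subtlety is noting the tautology that $X$ is a free indivisible witness whenever no divisible distribution lies within $\epsilon$, which immediately closes the argument and puts \textsc{Weak Distribution Divisibility}$_{n,\epsilon}$ in \pp.
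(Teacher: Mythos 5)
Your proposal is correct and is essentially the paper's own argument: run the polynomial-time routine of \prettyref{lem:fddwd} for \textsc{Distribution Divisibility}$_{n,\epsilon}$, answer \yes when it accepts (the approximate $n$-fold convolution is the divisible witness), and answer \no when it rejects, since then $X$ itself cannot be $n$-divisible and thus serves as the $\epsilon$-close non-divisible witness. The only difference is that you spell out the promise-problem bookkeeping slightly more explicitly than the paper does; no gap.
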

\begin{proof}
  Let $\mathfrak D$ be a finite discrete distribution. If \textsc{Distribution Divisibility}$_{n,\epsilon}$ answers \textsc{Yes}, we know that there exists an $n$-divisible distribution $\epsilon$-close to $\mathfrak D$. In case of \textsc{No}, $\mathfrak D$ itself is not $n$-divisible, hence we know that there exists a non-$n$-divisible distribution close to $\mathfrak D$.
\end{proof}

\subsubsection{Continuous Distributions}\label{sec:contdiv}
Let us briefly discuss the case of continuous distributions---continuous meaning a non-discrete state space $\mathcal X$, as specified in \prettyref{sec:nondiscrete}.
Although divisibility of continuous distributions is well-defined and widely studied, formatting the continuous case as a computational problem is delicate, as the continuous distribution must be specified by a finite amount of data for the question to be computationally meaningful. The most natural formulation is the continuous analogue of \prettyref{def:div} as a weak-membership problem. However, we can show that this problem is computationally trivial.

First observe the following intermediate result.
\begin{lemma}\label{lem:densediv}
	Take $f\in\Ccb^+$ with $\supp f\subset\set A\cup\set B$, where $\set A:=[0,M], \set B:=[2M,3M]$, $M\in\sset R_{>0}$. We claim that if $f$ is divisible, then both $f|_\set A$ and $f|_\set B$ are divisible.
\end{lemma}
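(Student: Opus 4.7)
The plan is to suppose $f = g \ast g$ for some $g \in \Ccb^+$ and exploit the gap $(M, 2M) \subset \sset R \setminus \supp f$ to pin down the support of $g$ very tightly: I will show that, up to translation (\prettyref{lem:aligned2}), the entirety of $g$'s mass must sit either in $[0, M/2]$ or in $[M, 3M/2]$, so that $f$ is in fact supported either entirely in $\set A$ or entirely in $\set B$. Both $f|_\set A$ and $f|_\set B$ are then divisible: the occupied side equals $f$ (divisible by hypothesis), the other side is identically zero.

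The first step is to show $g \equiv 0$ on the open interval $(M/2, M)$. Assume otherwise: $g(x_0) > 0$ for some $x_0 \in (M/2, M)$. By continuity $g$ is bounded below by some $\delta > 0$ on an open neighborhood $U$ of $x_0$. Then
\[ (g \ast g)(2 x_0) = \int_\sset R g(y)\, g(2 x_0 - y) \dd y \geq \int_U g(y)\, g(2 x_0 - y) \dd y > 0, \]
the integrand being strictly positive in a neighborhood of $y = x_0$. But $2 x_0 \in (M, 2M)$ lies outside $\supp f$, so $(g \ast g)(2 x_0) = 0$, a contradiction. By continuity, $g$ in fact vanishes on $[M/2, M]$, so after translating via \prettyref{lem:aligned2} to ensure $\supp g \subset [0, 3M/2]$ one can split $g = g_1 + g_2$ with $\supp g_1 \subset [0, M/2]$ and $\supp g_2 \subset [M, 3M/2]$.

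Expanding,
\[ f = g \ast g = g_1 \ast g_1 + 2\, g_1 \ast g_2 + g_2 \ast g_2, \]
the three summands are nonnegative and supported in $[0, M]$, $[M, 2M]$, and $[2M, 3M]$ respectively. Since $f \equiv 0$ on $(M, 2M)$ and the sum of nonnegative quantities vanishes only if each vanishes, each summand is zero on $(M, 2M)$; in particular $g_1 \ast g_2 \equiv 0$ there, and (being supported in $[M, 2M]$) identically zero.

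The crux---and the only step I would expect to require some care to write up rigorously---is concluding that $g_1 \ast g_2 \equiv 0$ forces $g_1 \equiv 0$ or $g_2 \equiv 0$. For this I would use the standard positivity argument: if both $g_i$ were nontrivial, pick open subintervals $I_i \subset \supp g_i$ on which $g_i \geq \delta_i > 0$; then on the nonempty open Minkowski sum $I_1 + I_2 \subset [M, 2M]$ the convolution $g_1 \ast g_2$ is bounded below by a positive multiple of $\delta_1 \delta_2 \min(|I_1|, |I_2|)$, contradicting $g_1 \ast g_2 \equiv 0$. Therefore, without loss of generality, $g_2 \equiv 0$; then $g = g_1$ and $\supp f \subset [0, M]$, so $f|_\set B \equiv 0$ is trivially divisible while $f|_\set A = f$ is divisible by hypothesis. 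The case $g_1 \equiv 0$ is symmetric, completing the proof.
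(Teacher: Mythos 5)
Your proof is correct, but it is structured differently from the paper's. The paper fixes a root $r$ with $f=r\ast r$, truncates it to $\bar r:=r\cdot\mathds 1_{[0,M/2]}$, and verifies directly---using the same doubling trick you use ($r$ positive near a point $y'$ forces $f(2y')>0$)---that $\bar r\ast\bar r$ vanishes off $\set A$ and reproduces $f$ on $\set A$; by symmetry the analogous truncation to $[M,3M/2]$ divides $f|_\set B$, so each restriction comes with an explicit convolution square root and the paper never needs to decide where the mass of $f$ actually sits. You instead prove a strictly stronger dichotomy: after pinning $\supp g$ into $[0,M/2]\cup[M,3M/2]$, you expand $g\ast g=g_1\ast g_1+2\,g_1\ast g_2+g_2\ast g_2$, kill the cross term on the gap, and conclude via the ``convolution of two nontrivial nonnegative functions cannot vanish'' argument that one of $g_1,g_2$ is trivial, so $f$ charges only one of $\set A,\set B$. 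This buys a cleaner structural statement (the paper's proof contains it implicitly, since showing $f|_\set A=\bar r\ast\bar r$ and $f|_\set B=\bar{\bar r}\ast\bar{\bar r}$ forces the cross term to vanish, but it is never extracted), at the price of leaning on the convention that the identically zero function counts as divisible---which is harmless under the paper's definition $\mathcal I=\{f:\nexists r\in\Ccb:f=r\ast r\}$ and in the application in \prettyref{prop:densediv}, where the relevant restriction is nonzero. Two small points of care: the containment $\supp g\subset[0,3M/2]$ is not a matter of translation via \prettyref{lem:aligned2}; the lower bound $g=0$ on $(-\infty,0)$ comes from $\supp f\subset[0,3M]$ and the upper bound needs the same doubling argument applied to $x_0>3M/2$ (both one-liners with the tool you already display, but they should be stated). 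Likewise, since elements of $\Ccb^+$ are only piecewise continuous, the point $x_0$ in the doubling step should be taken to be a continuity point of $g$---the same level of rigor the paper itself adopts.
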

\begin{proof}
	   Due to symmetry, it is enough to show divisibility for $f|_\set A$.
	   Assuming $f$ is divisible, we can write $f=r\ast r$, i.e. $f(x)=\int_\sset Rr(x-y)r(y)\dd y$. It is straightforward to show that $r(x)=0\ \forall x<0$. Define
	   \begin{equation}\label{eq:densediv1}
	   \bar r(x)=\begin{cases}
	   r(x) & x\in\set A/2\\
	   0 & \text{otherwise}\comma
	   \end{cases}
	   \end{equation}
	   where $\set A/2:=\{a/2:a\in\set A\}$. Then
	   \begin{align*}
	   	(\bar r\ast\bar r)(x)&=\int_\sset R\bar r(x-y)\bar r(y)\dd y\\
	   	&=\int_\sset R\dd y\begin{cases}
	   		r(x-y) & x-y\in\set A/2\\0&\text{otherwise}
	   	\end{cases}\cdot\begin{cases}
	   	r(y)&y\in\set A/2\\0&\text{otherwise}\point
	   \end{cases}
	\end{align*}
	We see that $(\bar r\ast\bar r)(x)=0$ for $x\not\in\set A$. For $x\in\set A$, the support of the integrand is contained in $\{y:y\in x-\set A/2\land y\in\set A/2\}=x-\set A/2\cap\set A/2:=\set S_x$, and hence we can write $(\bar r\ast\bar r)(x)=\int_{\set S_x}r(x-y)r(y)\dd y$.
	It hence remains to show that $f|_\set A(x)=\int_{\set S_x}r(x-y)r(y)\dd y\ \forall x\in\set A$. The integrand $r(x-y)r(y)=0\ \forall y<0\lor y>x$. The difference in the integration domains can be seen in \prettyref{fig:intdom}. We get two cases.
	\begin{figure}[t]
		\centering
		\subfigure[]{
			\begin{tikzpicture}[scale=.9]
			\draw[color=gray!50] (0,0) grid[xstep=2, ystep=2] (4.3,4.3);
			\draw[fill=myGreen!80] (0,0) -- (4,4) -- (4,0) -- cycle;
			\draw[fill=myPurple] (0,0) -- (2,2) -- (4,2) -- (2,0) -- cycle;
			\draw [<->,very thick] (0,4.5) node (yaxis) [above] {$y$} |- (4.5,0) node (xaxis) [right] {$x$};
			\draw (0,-.3) node{0};
			\draw (2,-.3) node{M/2};
			\draw (4,-.3) node{M};
			\draw (0,0) node[left]{0};
			\draw (0,2) node[left]{M/2};
			\draw (0,4) node[left]{M};
			\end{tikzpicture}
		}
		\qquad
		\subfigure[]{
			\begin{tikzpicture}[scale=.9]
			\draw[color=gray!50] (0,0) grid[xstep=2, ystep=2] (4.3,4.3);
			\draw[fill=myGreen!80] (0,0) -- (4,4) -- (4,0) -- cycle;
			\draw[fill=myPurple] (0,0) -- (3.2,3.2) -- (4,3.2) -- (0.8,0) -- cycle;
			\draw [<->,very thick] (0,4.5) node (yaxis) [above] {$y$} |- (4.5,0) node (xaxis) [right] {$x$};
			\draw (0,-.3) node{0};
			\draw (2,-.3) node{M/2};
			\draw (4,-.3) node{M};
			\draw (0,0) node[left]{0};
			\draw (0,2) node[left]{M/2};
			\draw (0,4) node[left]{M};
			\end{tikzpicture}
		}
		\caption{(a) Integration domains in \prettyref{prop:densediv} for $\bar r\ast\bar r$ (purple) and $f|_\set A$ (light green), respectively. (b) Example for integration domains in \prettyref{prop:densedec} for $\bar r\ast\bar s$ (purple) and $f|_\set A$ (light green), respectively. }
		\label{fig:intdom}
	\end{figure}

	Let $x\in\set A$. Assume $\exists y'\in(M/2,M)$ such that $r(x-y')r(y')>0$. Let $x':=2y'$. We then have $r(y')^2=r(x'-y')r(y')>0$, and due to continuity $f(x')>0$, contradiction, because $x'\in(M,2M)$.

	Analogously fix $x'\in(M/2,M)$. Assume $\exists y'\in(0,x'-M/2)$ such that $r(x'-y')r(y')>0$, and thus $r(x'-y')>0$, where $a:=x'-y'>M/2$, $2a\in(M,2M)$. Then $r(a)^2=r(2a-a)r(a)>0$, due to continuity $f(2a)>0$, again contradiction.
\end{proof}

\begin{proposition}\label{prop:densediv}
  Let $\Ccb^+$ denote the set of piecewise continuous nonnegative functions of bounded support. Then the set of \emph{nondivisible} functions, $\mathcal I:=\{f:\nexists r\in\Ccb: f=r\ast r\}$ is dense in $\Ccb$.
\end{proposition}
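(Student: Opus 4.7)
The plan is to approximate an arbitrary $f \in \Ccb^+$ by a perturbed function obtained by adding a small bump with a jump discontinuity, placed in a region disjoint from $\supp f$. \prettyref{lem:densediv} then reduces indivisibility of the perturbation to indivisibility of this bump, which is manifest from its discontinuities.

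More precisely, given $f \in \Ccb^+$ and $\epsilon > 0$, I first translate $f$ so that $\supp f \subset [0,M]$ for some $M > 0$; this is harmless because both divisibility and $L^\infty$-density are translation-equivariant. Next I pick $\delta \in (0,\epsilon)$ and define the bump $g(x) := \delta \cdot \mathbf 1_{[2M,\,5M/2]}(x)$, then set $\tilde f := f + g$. Since $\supp f$ and $\supp g$ are disjoint, $\tilde f \in \Ccb^+$ with $\|\tilde f - f\|_\infty = \delta < \epsilon$, and $\supp \tilde f \subset [0,M] \cup [2M,3M]$, exactly the configuration required by the hypothesis of \prettyref{lem:densediv}.

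It remains to argue that $\tilde f$ is indivisible. Suppose for contradiction that $\tilde f = r \ast r$ for some $r \in \Ccb$. Then \prettyref{lem:densediv} forces $\tilde f|_{[2M,3M]} = g$ to likewise admit a factorization $g = s \ast s$ with $s \in \Ccb$. But $s$ is piecewise continuous and compactly supported, hence lies in $L^2(\sset R)$, and the standard $L^2$-translation-continuity argument yields that $s \ast s$ is continuous on $\sset R$. This contradicts the jump discontinuities of $g$ at $x = 2M$ and $x = 5M/2$, so no such $r$ can exist, i.e.\ $\tilde f \in \mathcal I$.

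The main technical point is the continuity of $s \ast s$ for $s \in \Ccb$, which is standard but must be invoked explicitly to play off against the discontinuity of the bump. The rest is bookkeeping: choosing $M$ and $\delta$ so as to align the supports with \prettyref{lem:densediv} while keeping the perturbation within the prescribed accuracy $\epsilon$. Density in $\Ccb^+$ with respect to the uniform norm follows at once; essentially the same construction also gives density in $L^1$, by instead requiring $\delta \cdot M/2 < \epsilon$.
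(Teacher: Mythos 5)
Your proof is correct and follows essentially the same route as the paper: perturb $f$ by a small nonnegative bump supported inside $(2M,3M)$ so that the support configuration matches \prettyref{lem:densediv}, and then use that lemma to localize any putative divisibility of the perturbed function to the bump. The only difference is that the paper simply takes ``some nondivisible $j\in\Ccb$ with $\supp j\subset(2M,3M)$'' as given, whereas you exhibit one explicitly (a scaled indicator) and justify its nondivisibility via the continuity of $s\ast s$ for compactly supported $s\in L^2$ --- a small but genuine tightening, since the paper's argument implicitly presupposes that nondivisible elements of $\Ccb$ with prescribed support exist.
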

\begin{proof}
It is enough to show the claim for functions $f\in\Ccb^+$ with $\inf\supp f\ge0$. Let $\epsilon>0$, and $M:=\sup\supp f$. Take $j\in\Ccb$ to be nondivisible with $\supp j\subset(2M,3M)$, and define
\[
g(x):=\begin{cases}
f(x)&x<M\\ \epsilon j(x)/\|j\|_\infty&x\in(2M,3M) \\ 0 & \text{otherwise}\point
\end{cases}
\]
By construction, $\|f-g\|_\infty<\epsilon$, but $g|_{(2M,3M)}\equiv j$ is not divisible, hence by \prettyref{lem:densediv} $g$ is not divisible, and the claim follows.
\end{proof}

\begin{corollary}\label{cor:densediv}
  Let $\epsilon>0$. Let $X$ be a continuous random variable with pdf $\map p_X(k)$. Then there exists a nondivisible random variable $Y$ with pdf $\map p_Y(k)$, such that $\|\map p_X-\map p_Y\|<\epsilon$.
\end{corollary}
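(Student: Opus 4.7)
My plan is to lift \prettyref{prop:densediv} from $\Ccb^+$ to the class of probability density functions by a truncation-and-renormalisation argument with an $\epsilon/3$ budget. First, given the pdf $\map p_X$, I would pick $R$ large enough that the compactly supported pdf $\tilde{\map p}:=\mathds 1_{[-R,R]}\map p_X / c_R$, where $c_R:=\int_{-R}^R\map p_X$, lies in $\Ccb^+$ and satisfies $\|\map p_X-\tilde{\map p}\|<\epsilon/3$. By \prettyref{cor:contdist} the pdf is piecewise continuous, and $c_R\to 1$ as $R\to\infty$ by dominated convergence, which gives the desired approximation (either in total variation, or in sup-norm after noting that a piecewise continuous $L^1$ function with finitely many pieces must vanish outside a bounded set).

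Next, I would apply \prettyref{prop:densediv} to $\tilde{\map p}\in\Ccb^+$ to obtain a nondivisible $g\in\Ccb^+$ with $\|\tilde{\map p}-g\|_\infty<\epsilon/3$. The key observation here is that nondivisibility of a function is invariant under positive rescaling: if $g=ch$ with $c>0$, then $g=r\ast r$ iff $h=(r/\sqrt{c})\ast(r/\sqrt{c})$. Setting $\map p_Y:=g/\!\int\! g$ therefore produces a valid pdf of a nondivisible random variable $Y$, as desired.

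The final step is a triangle inequality. Since $\tilde{\map p}$ and $g$ agree up to $\epsilon/3$ in sup-norm on a common compact support, the normalising constant $\int g$ differs from $\int\tilde{\map p}=1$ by an amount controlled by $\epsilon$ times the measure of that support, which makes $\|\map p_Y - g\|$ correspondingly small. Chaining the three approximations with tolerances summing to at most $\epsilon$ yields $\|\map p_X - \map p_Y\|<\epsilon$.

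The main obstacle, such as it is, is keeping the three error contributions---the truncation, the $\Ccb^+$ density perturbation, and the renormalisation---simultaneously under control. This is routine by partitioning the error budget into thirds and choosing $R$ large before invoking \prettyref{prop:densediv}; the construction is ultimately just a density-class bootstrap of the proposition.
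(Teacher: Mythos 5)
Your proposal is correct and follows essentially the same route as the paper: truncate $\map p_X$ to a compactly supported approximant, control the normalisation error, and invoke \prettyref{prop:densediv} with a triangle-inequality error budget. Your explicit remark that nondivisibility is invariant under positive rescaling (so the renormalised perturbation is still nondivisible) is a small point the paper leaves implicit, but the argument is the same.
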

\begin{proof}
  Let $\epsilon>0$ small. Since $\Ccb\subset\{f\ \text{integrable}\}=:\set L$, we can pick $f_M\in\set L:\supp f_M\in(-M,M),\|\map p_X-f_M\|<\epsilon/3$ and $\|f_M\|=1+\delta$ with $|\delta|\le\epsilon/3$. Then
  \[
  \left\lVert\map p_X-\frac{f_M}{\|f_M\|}\right\rVert=\left\lVert\map p_X-\frac{f_M}{1+\delta}\right\rVert\le\|\map p_X-f_M\|+\frac{\epsilon}{2}\|f_M\|\le\epsilon\point
  \]
  and \prettyref{prop:densediv} finishes the claim.
\end{proof}

\begin{corollary}
  Any weak membership formulation of divisibility in the continuous setting is trivial to answer, as for all $\epsilon>0$, there always exists a nondivisible distribution $\epsilon$ close to the one at hand. Similar considerations apply to other formulations of the continuous divisibility problem.
\end{corollary}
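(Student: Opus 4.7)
The plan is to read this corollary as a direct packaging of \prettyref{cor:densediv}. Recall that the natural continuous analogue of \prettyref{def:wfdddiv} asks, given a continuous random variable $X$ and tolerance $\epsilon>0$, to return \textsc{Yes} if there exists a divisible $Y$ with $\|\map p_X-\map p_Y\|<\epsilon$ and \textsc{No} if there exists a nondivisible such $Y$ (either answer being acceptable when both branches hold). The key observation is that under any such weak-membership relaxation, the \textsc{No}-branch is always available: by \prettyref{cor:densediv} one can always produce a nondivisible $Y$ within total-variation distance $\epsilon$ of $X$, so a constant algorithm that returns \textsc{No} is a valid solver.

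Concretely, I would structure the proof in three short steps. First, fix an arbitrary continuous random variable $X$ with pdf $\map p_X$ and arbitrary $\epsilon>0$. Second, invoke \prettyref{cor:densediv} to obtain a nondivisible random variable $Y$ with pdf $\map p_Y$ satisfying $\|\map p_X-\map p_Y\|<\epsilon$; this $Y$ is the required witness for the \textsc{No}-branch. Third, note that the algorithm which simply outputs \textsc{No} then correctly answers every instance of the weak-membership problem, so the problem is computationally trivial (in fact, constant time and input-independent).

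For the final clause about other continuous formulations, I would remark that the same density argument applies, for example, to decomposability: the continuous analogue of \prettyref{prop:densediv} with $r\ast r$ replaced by $r\ast s$ still holds by an identical support argument (using \prettyref{lem:aligned2} to align the supports of $r$ and $s$), so indecomposable distributions are likewise dense in $\Ccb$, rendering weak-membership decomposability trivial by the same reasoning. More generally, any property $P$ whose complement is dense in $\Ccb$ admits a trivial weak-membership test.

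The main obstacle here is not technical but conceptual: one must be careful that the formulation of the continuous weak-membership problem is the ``two-sided'' promise variant (as in \prettyref{def:wfdddiv}), since a one-sided version asking only for divisible neighbours would still be nontrivial. Once the formulation is pinned down, the corollary follows by inspection from \prettyref{cor:densediv}, and no further estimates are required.
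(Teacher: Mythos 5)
Your proposal is correct and follows exactly the paper's (implicit) reasoning: the corollary is an immediate consequence of \prettyref{cor:densediv}, since density of nondivisible distributions means the \textsc{No}-branch of any weak-membership formulation always has a witness, so a constant answer suffices; the same density argument (\prettyref{prop:densedec}) handles decomposability. No further comment is needed.
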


\subsubsection{Infinite Divisibility}
Let us finally and briefly discuss the case of infinite divisibility. While interesting from a mathematical point of view, the question of infinite divisibility is ill-posed computationally. Trivially, discrete distributions cannot be infinitely divisible, as follows directly from \prettyref{th:rootp}. A similar argument shows that neither the $\epsilon$, nor the weak variant of the discrete problem is a useful question to ask, as can be seen from \prettyref{lem:fddwd} and \ref{th:weakdiv}.

By the same arguments as in \prettyref{sec:contdiv}, the weak membership version is easy to answer and thus trivially in \pp.

\subsection{Decomposability}\label{sec:dec}
\subsubsection{Computational Problems}
We define the decomposability analogue of \prettyref{def:fdddiv} and \ref{def:wfdddiv} as follows.
\begin{definition}[\textsc{Distribution Decomposability}]\label{def:fdddec}\leavevmode
\begin{description}
\item[Instance.] Finite discrete random variable $X\sim\mathfrak D$.
\item[Question.] Do there exist finite discrete distributions $\mathfrak D', \mathfrak D'': X=Z_1+Z_2$ for random variables $Z_1\sim\mathfrak D', Z_2\sim\mathfrak D''$?
\end{description}
\end{definition}

\begin{definition}[\textsc{Weak Distribution Decomposability}$_\epsilon$]\label{def:wfdddec}\leavevmode
\begin{description}
\item[Instance.] Finite discrete random variable $X\sim\mathfrak D$ with pmf $\map p_X(k)$.
\item[Question.] If there exists a finite discrete random variable $Y$ with pmf $\map p_Y(k)$, such that $\|\map p_X-\map p_Y\|_\infty<\epsilon$ and such that
\begin{enumerate}
\item $Y$ is decomposable---return \textsc{Yes}
\item $Y$ is indecomposable---return \textsc{No}.
\end{enumerate}
\end{description}
\end{definition}

In this section, we will show that \textsc{Distribution Decomposability} is \np-hard, for which we will need a series of intermediate results.
Requiring the support of the first random variable $Z_1$ to have a certain size, i.e. $|\supp(p_\mathfrak{D'})|=m$, yields the following program.
\begin{definition}[\textsc{Distribution Decomposability}$_m$, $m\ge2$]\leavevmode
\begin{description}
\item[Instance.] Finite discrete random variable $X\sim\mathfrak D$ with $|\supp(p_\mathfrak{D'})|>m$.
\item[Question.] Do there exist finite discrete distributions $\mathfrak D', \mathfrak D'': X=Z_1+Z_2$ for random variables $Z_1\sim\mathfrak D', Z_2\sim\mathfrak D''$ and such that $|\supp(p_\mathfrak{D'})|=m$?
\end{description}
\end{definition}
We then define \textsc{Distribution Even Decomposability} to be the case where the two factors have equal support.

The full reduction tree can be seen in \prettyref{fig:reduction2}.
\begin{figure}
\centering
\begin{tikzpicture}[node distance=1.38cm,auto,every node/.style={scale=0.75}]
  \node (part) [fill=myGreenL] {\textsc{Partition}};
  \node (sspe) [below of=part] {\textsc{Subset Sum}$(\cdot,\Sigma_\cdot+\map{poly}\epsilon)$};
  \node (fddde) [left of=part,node distance=4cm] {\textsc{\shortstack{Finite Discrete\\Distribution\\Decomposability$_\epsilon$}}};
  \node (wfddde) [above of=fddde,node distance=2cm] {\textsc{\shortstack{Weak\\Finite Discrete\\Distribution\\Decomposability$_\epsilon$}}};

  \node (ssp) [above of=part] {\textsc{Subset Sum}$(\cdot,\Sigma_\cdot)$};
  \node (fdddiv) [above of=ssp,node distance=2cm] {\textsc{\shortstack{Finite Discrete\\Distribution\\Decomposability}}};
  \node (sssm) [right of=part,node distance=4.4cm] {\textsc{Signed Subset Sum}$_m$};
  \node (ss) [below of=sssm,fill=myGreenL] {\textsc{Subset Sum}};
  \node (sspm) [above of=sssm] {\textsc{Subset Sum}$_m(\cdot,\Sigma_\cdot)$};
  \node (fdddivm) [above of=sspm,node distance=2cm] {\textsc{\shortstack{Finite Discrete\\Distribution\\Decomposability$_m$}}};
  \node (evenss) [right of=ss,node distance=4.5cm] {\textsc{Even Subset Sum}};
  \node (evenfdddiv) [above of=evenss,node distance=1.8cm] {\textsc{\shortstack{Finite Discrete\\Distribution Even\\Decomposability}}};
  \draw[->] (part) to node {} (ssp);
  \draw[->] (part) to node {} (sspe);
  \draw[->] (sspe.west) to node {} (fddde.south);
  \draw[->] (fddde) to node {} (wfddde);

  \draw[->] (ssp) to node {} (fdddiv);
  \draw[->,dashed] (fdddiv.east) to node {} (fdddivm.west);
  \draw[->,dashed] (ssp.east) to node {} (sspm.west);
  \draw[->] (ss) to node {} (sssm);
  \draw[->] (sssm) to node {} (sspm);
  \draw[->] (sspm)  to node {} (fdddivm);
  \draw[->] (ss.east) to node {} (evenss.west);
  \draw[->] (evenss) to node {} (evenfdddiv);
\end{tikzpicture}
\caption{Complete chain of reduction for our discrete programs. The dashed lines are obvious and not mentioned explicitly.}
\label{fig:reduction2}
\end{figure}

Analogous to \prettyref{lem:npcontainment}, we state the following observation.
\begin{lemma}\label{lem:npcontainment2}
	All the above \textsc{Decomposability} problems in \prettyref{def:fdddec} to \ref{def:wfdddec} are contained in \np.
\end{lemma}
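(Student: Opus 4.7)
The plan is to mirror the brief argument of \prettyref{lem:npcontainment}: for each problem I would exhibit a polynomial-size witness together with a polynomial-time verifier. By \prettyref{lem:divtopoly}, decomposability of a finite discrete distribution $\mathfrak D$ is equivalent to a nontrivial factorisation $f_\mathfrak D = g_1 \cdot g_2$ in $\mathcal R$, that is, with both $g_i$ of positive degree and nonnegative coefficients. For \textsc{Distribution Decomposability} the witness would be the pair $(g_1,g_2)$; the verifier multiplies them out, compares coefficients against those of $f_\mathfrak D$, and checks positivity of the degrees together with nonnegativity of all coefficients, all in time polynomial in $\deg f_\mathfrak D$ and $\mathfrak r(f_\mathfrak D)$. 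The $_m$ and even-decomposability variants are handled by the same witness plus one additional $\mathcal O(\deg f_\mathfrak D)$-time check on $|\supp g_1|=m$ or on $|\supp g_1|=|\supp g_2|$, respectively.

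The one substantive point is that $(g_1,g_2)$ admits a representation of bit complexity polynomial in $\mathfrak r(f_\mathfrak D)$. After clearing denominators in $f_\mathfrak D$, this is the content of the classical Mignotte bound: any divisor $g$ of $f$ in $\sset Z[x]$ satisfies $\|g\|_\infty \leq 2^{\deg f}\|f\|_2$, so $\mathfrak r(g_i) = \mathcal O(\map{poly}(\mathfrak r(f_\mathfrak D)))$. Normalising the factors---for instance by imposing $g_1(1)\cdot g_2(1)=1$---does not affect this estimate, and the equivalence class mod out in $\mathcal R$ in \prettyref{def:charpoly} absorbs the remaining scalar ambiguity.

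For \textsc{Weak Distribution Decomposability}$_\epsilon$, a \textsc{Yes} witness would be a triple $(Y, g_1, g_2)$, where $Y$ is a pmf satisfying $\|\map p_X - \map p_Y\|_\infty < \epsilon$ and $g_1 g_2 = f_Y$ is a nontrivial nonnegative factorisation. The verifier additionally checks the closeness condition to $X$. It suffices to restrict attention to pmfs $Y$ of bit complexity polynomial in $\mathfrak r(X)$ and $\log(1/\epsilon)$---obtained if necessary by rounding to a grid of spacing $\sim \epsilon/2$---after which Mignotte once more controls the factor sizes. Since membership in \np only requires certifying \textsc{Yes} instances, this handles all four problems simultaneously.

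The only mild obstacle I anticipate is making the Mignotte-style bit-complexity bound on the factors rigorous in the normalised setting of $\mathcal R$ and, for the weak case, justifying the rounding step for $Y$; everything beyond that is a routine compilation of the lemmas already established in this section.
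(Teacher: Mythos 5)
Your proposal follows essentially the same route as the paper: the paper's proof simply exhibits the two factor distributions (``two tables of numbers'') as the witness and lets the verifier convolve them and compare against the given pmf, which is exactly your multiply-and-compare-coefficients verifier phrased via \prettyref{lem:divtopoly}. Your additional care about witness bit-size (Mignotte-type bounds, rounding $Y$ in the weak variant) goes beyond the paper's one-line ``poly-sized'' assertion but does not change the argument's structure.
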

\begin{proof}
	It is straightforward to construct a witness and a verifier that satisfies the definition of the decision class \np. For example in \prettyref{def:fdddece}, a witness is given by two tables of numbers which are easily checked to form finite discrete distributions. Convolving these lists and comparing the result to the given distribution can clearly be done in polynomial time. Both verification and witness are thus poly-sized, and the claim follows.
\end{proof}

\subsubsection{Even Decomposability}\label{sec:even}
We continue by proving that \textsc{Distribution Even Decomposability} is \np-hard. We will make use of the following variant of the well-known \textsc{Subset Sum} problem, which is \np-hard---see \prettyref{lem:evenoptpart} for a proof. The interested reader will find a rigorous digression in \prettyref{sec:nptools}.
\begin{definition}[\textsc{Even Subset Sum}]\label{def:evenss}\leavevmode
  \begin{description}
    \item[Instance.] Multiset $\set S$ of reals with $|\set S|$ even, $l\in\sset R$.
    \item[Question.] Does there exist a multiset $\set T\subsetneq \set S$ with $|\set T|=|\set S|/2$ and such that
    $|\sum_{t\in\set T}t-\sum_{s\in\set S\setminus\set T}s|<l$?
  \end{description}
\end{definition}
This immediately leads us to the following intermediate result.
\begin{lemma}\label{lem:even}
	\textsc{Distribution Even Decomposability} is \np-hard.
\end{lemma}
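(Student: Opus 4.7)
The plan is to establish \np-hardness by giving a polynomial-time reduction from \textsc{Even Subset Sum} (\prettyref{def:evenss}), whose \np-hardness is provided by \prettyref{lem:evenoptpart}. By \prettyref{lem:divtopoly}, even decomposability of a finite discrete distribution $\mathfrak D$ is equivalent to asking whether the characteristic polynomial $f_\mathfrak D\in\mathcal R$ admits a factorization $f_\mathfrak D=g\cdot h$ with $g,h$ both having non-negative coefficients and $|\supp g|=|\supp h|$. Thus the central task is to encode an instance $(\set S=\{s_1,\ldots,s_{2n}\},l)$ of \textsc{Even Subset Sum} into such a polynomial $f$.

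I would build $f$ as a product $\prod_{i=1}^{2n}p_i(x)$ of $2n$ sparse non-negative factors, one per $s_i$, tuned so that (i) the only non-negative factorizations of $f$ have the form $g=\prod_{i\in\set T}p_i$, $h=\prod_{i\notin\set T}p_i$ for some $\set T\subseteq\{1,\ldots,2n\}$, and (ii) the support-size equality $|\supp g|=|\supp h|$ occurs precisely when $|\set T|=n$ and in addition the balance condition $|\sum_{i\in\set T}s_i-\sum_{i\notin\set T}s_i|<l$ holds. Each $p_i$ must therefore be rich enough to propagate information about the subset sum into cancellations in the support of the corresponding product; a natural template is to take small-degree irreducible factors whose coefficients are tuned so that certain coefficient sums of $g$ (or $h$) vanish exactly when the sum-balance condition fails, shrinking $|\supp g|$ below $|\supp h|$ and breaking the required equality.

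The main obstacle will be precisely this: designing factors $p_i$ such that support-size equality genuinely tracks the balance condition rather than being automatic for every cardinality-$n$ partition---as would happen for naive choices such as $\prod_i(1+s_ix)$ or $\prod_i(1+x^{s_i})$, where any $n$-subset yields balanced supports regardless of sums. Once a suitable template is in place, arguments analogous to \prettyref{lem:sing} will be needed to lift any singularities and enforce distinctness of the nonzero eigenvalues without altering the essential inequalities, and a bit-complexity analysis parallel to \prettyref{sec:sat} will verify that the polynomial's coefficients remain rational with polynomial bit-length in the input size of $(\set S, l)$, so that the reduction runs in polynomial time. Combined with the \np-containment of \prettyref{lem:npcontainment2}, this would upgrade the lemma to \np-completeness, but \np-hardness is all that is claimed here.
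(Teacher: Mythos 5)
There is a genuine gap: you have set up the right frame (reduce from \textsc{Even Subset Sum}, pass through \prettyref{lem:divtopoly} to a polynomial factorization question), but the actual gadget---the heart of the reduction---is missing. You explicitly defer ``designing factors $p_i$ such that support-size equality genuinely tracks the balance condition,'' which is precisely the step that has to be invented; as stated, the proposal only names the obstacle. Worse, the route you sketch is likely a dead end: if every $p_i$ has nonnegative coefficients, then no cancellation can occur in any subset product (coefficients of products of nonnegative polynomials only accumulate), so the ``coefficient sums of $g$ vanish exactly when the balance condition fails'' mechanism cannot be realized, and every cardinality-$n$ subset would yield a legitimate even decomposition regardless of the sums---exactly the failure mode you warn against for $\prod_i(1+s_ix)$. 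Encoding the magnitudes into exponents (e.g.\ $1+x^{s_i}$) also breaks polynomial size, since the $s_i$ may have polynomial bit length but exponential magnitude.

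The paper resolves this differently: the balance condition is encoded in \emph{nonnegativity of the factors}, not in support sizes, while the ``even'' constraint only enforces the cardinality $|\set T|=|\set S|/2$. Concretely, one first applies an affine shift (\prettyref{eq:bi}, legitimate for the even variant by \prettyref{lem:rescale}) so that for equal-size subsets the condition $|\sum_{t\in\set T}t-\sum_{s\in\set S\setminus\set T}s|<l$ becomes ``both $\sum_{\set B_1}b_i>0$ and $\sum_{\set B_2}b_i>0$,'' with all $|b_i|<\delta$ small. Then one takes the irreducible quadratics $g(b_i,x)=x^2+b_ix+1$, which individually may have a \emph{negative} linear coefficient, and proves---by bounding the coefficients of a subset product $f_{\set T}$ against those of $(x^2+\delta x+1)^n$ and choosing $\delta=\mathcal O(1/n^2)$---that $f_{\set T}$ has nonnegative coefficients iff $\Sigma=\sum_{t\in\set T}t\ge0$ (the linear coefficient is $|\set T|\,\Sigma$ and dominates the higher-order odd terms; the even coefficients stay positive). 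Unique factorization in $\sset R[x]$ then guarantees that any factorization $f_{\set B}=g\cdot h$ in $\mathcal R$ with $\deg g=\deg h$ is a subset product over a half-size partition, completing the equivalence. Without some construction of this kind (or a genuinely new one making your support-size idea work), your argument does not yet establish \np-hardness.
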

\begin{proof}
	Let $(\set S,l)$ be an instance of \textsc{Even Subset Sum}. We will show that there exists a polynomial $f\in\mathcal R$ of degree $2|\set S|$ such that $f$ is divisible into $f=g\cdot h$ with $\deg g=\deg h$ iff $(\set S,l)$ is a \yes instance.
	We will explicitly construct the polynomial $f\in\mathcal R$. As a first step, we transform the \textsc{Even Subset Sum} instance $(\set S,l)$, making it suited for embedding into $f$.

	Let $N:=|\set S|$ and denote the elements in $\set S$ with $s_1,\ldots,s_N$. We perform a linear transformation on the elements $s_i$ via
	\begin{equation}\label{eq:bi}
	b_i:=a\left(s_i-\frac1{|\set S'|}\sum_{s\in\set S'}s\right)+\frac{al}{2|\set S'|}\mt{for} i=1,\ldots,N\comma
	\end{equation}
	where $a\in\sset R_{>0}$ is a free scaling parameter chosen later such that $|b_i|<\delta\in\sset R_+$ small. Let $\set B:=\{b_1,\ldots,b_N\}$. By \prettyref{lem:rescale}, we see that \textsc{Even Subset Sum}$(\set S,l)=$ \textsc{Even Subset Sum}$(\set B,al)$. Since further $\sum_ib_i=al/2>0$, we know that $(\set B,al)$ is a \yes instance if and only if there exist two non-empty disjoint subsets $\set B_1\cup\set B_2=\set B$ such that both
	\begin{equation}\label{eq:subsetineq}
	\sum_{i\in\set B_1}b_i>0\mt{and}\sum_{i\in\set B_2}b_i>0\point
	\end{equation}

	The next step is to construct the polynomial $f$ and prove that it is divisible into two polynomial factors $f=g\cdot h$ if and only if $(\set B,al)$ is a \yes instance. We first define quadratic polynomials $g(b_i,x):=x^2+b_ix+1$ for $i=1,\ldots,N$, and set $f_\set T(x):=\prod_{b\in\set T}g(b,x)$ for $\set T\subset\set B$. Observe that for suitably small $\delta$, the $g(b_i,x)$ are irreducible over $\sset R[x]$. With this notation, we claim that $f_\set B(x)$ has the required properties.

	In order to prove this claim, we first show that for sufficiently small scaling parameter $a$, a generic subset $\set T\subset\set B$ with $n:=|\set T|$ and $f_\set T(x)=:\sum_{i=1}^{2|\set T|}c_ix^i$, the coefficients $c_i$ satisfy
	\begin{align}
		c_0&=1\comma\\
		\map{sgn}(c_1)&=\map{sgn}(\Sigma)\comma\\
		c_{2j}&>0\mt{for}j=1,\ldots,|\set T|\comma\label{eq:ineqeven}\\
		\map{sgn}(c_{2j+1})&\ge\map{sgn}(\Sigma)\mt{for}j=1,\ldots,|\set T|-1\comma\label{eq:ineqodd}
	\end{align}
	where $\Sigma:=\sum_{t\in\set T}t$. Indeed, if then $f_\set B=g\cdot h$, where $g,h\in\mathcal R$, then $g=f_{\set B_1}$ and $h=f_{\set B_2}$ for aforementioned subsets $\set B_1,\set B_2\subsetneq\set B$, and conversely if $(\set B,al)$ is a \yes instance, then $f_\set B=f_{\set B_1}\cdot f_{\set B_2}$---remember that $\sset R[x]$ is a unique factorization domain, so all polynomials of the shape $f_\set T$ necessarily decompose into quadratic factors.

	By construction, $c_0=1$ and $c_1=n\Sigma$, so the first two assertions follow immediately.
	To address  \prettyref{eq:ineqeven} and \ref{eq:ineqodd}, we further split up the even and odd coefficients into
	\begin{equation}\label{eq:coeffs}
		c_j=:\begin{cases}
		c_{j,0}+c_{j,2}+\ldots+c_{j,j}&\text{if $j$ even}\\
		c_{j,1}+c_{j,3}+\ldots+c_{j,j}&\text{if $j$ odd}\comma
		\end{cases}
	\end{equation}
	where $c_{j,k}$ is the coefficient of $x^jb_{i_1}\cdots b_{i_k}$. We thus have $c_{j,k}=\mathcal O(\delta^k)$ in the limit $\delta\rightarrow0$---we will implicitly assume the limit in this proof and drop it for brevity.	 Our goal is to show that the scaling in $\delta$ suppresses the combinatorial factors, i.e. that $c_j$ is dominated by its first terms $c_{j,0}$ and $c_{j,1}$, respectively.

	In order to achieve this, we need some more machinery. First regard $g(\delta,x)=x^2+\delta x+1$. It is imminent that for an expansion
  \[
  g(\delta,x)^n=:\sum_{j=0}^{2n}x^j\sum_{k=0}^n d_{j,k}\delta^k\comma
  \]
  we get coefficient-wise inequalities
  \begin{equation}\label{eq:coefficientwise}
	|c_{j,k}|\le d_{j,k}\ \forall j=0,\ldots,2n, k=0,\ldots,n\point
	\end{equation}
	We will calculate the coefficients $d_{j,k}$ of $g(\delta,x)^n$ explicitly and use them to bound the coefficients $c_{j,k}$ of $f_\set T(x)$.

  Using a standard \knownth{Cauchy} summation and the uniqueness of polynomial functions, we obtain
  \begin{align*}
    g(\delta,x)^n&=\sum_{j=0}^n\frac{1}{j!}(1+x^2)^{n-j}x^j(n)_j\delta^j\\
    &=\sum_{j=0}^n\frac{\delta^j}{j!}(n)_jx^j\sum_{k=0}^{n-j}{n-j\choose k}x^{2k}\\
    &\equiv\sum_{j=0}^\infty\sum_{k=0}^\infty\frac{\delta^j}{j!}(n)_j{n-j\choose k}x^{j+2k}\\
    &=\sum_{j=0}^\infty\sum_{l=0}^j\frac{\delta^l}{l!}(n)_l{n-l\choose j-l}x^{2j-l}\\
    &\equiv\sum_{j=0}^n\sum_{l=0}^j\frac{\delta^l}{l!}(n)_l{n-l\choose j-l}x^{2j-l}\\
    &=\sum_{j=0}^n\sum_{l=j}^{2j}\frac{\delta^{2j-l}}{(2j-l)!}(n)_{2j-l}{n-2j+l\choose l-j}x^l\point
  \end{align*}
  With $(n)_l$ we denote  the falling factorial, i.e. $(n)_l=n(n-1)(n-2)\cdots(n-l+1)$. By convention, $(n)_0=1$.

  Regarding even and odd powers of $x$ separately, we can thus deduce that
  \begin{align*}
  g(\delta,x)^n&=\sum_{j=0}^{2n}x^j\begin{dcases}
  \sum_{k=0}^{\lfloor\frac j2\rfloor}\frac{\delta^{2k+1}}{(2k+1)!}\frac{(n)_{\lceil\frac j2\rceil+k}}{(\lfloor\frac j2\rfloor-k)!} &\text{if $j$ odd}\\
  \sum_{k=0}^{\frac j2}\frac{\delta^{2k}}{(2k)!}\frac{(n)_{\frac j2+k}}{(\frac j2-k)!} &\text{if $j$ even}
  \end{dcases}\\
   &=\sum_{j=0}^{2n}x^j\begin{dcases}
   (n)_{\lceil\frac j2\rceil}\sum_{k=0}^{\lfloor\frac j2\rfloor}\frac{\delta^{2k+1}}{(2k+1)!}\frac{(n-\lceil\frac j2\rceil)_{k}}{(\lfloor\frac j2\rfloor-k)!} &\text{if $j$ odd}\\
   (n)_{\frac j2}\sum_{k=0}^{\frac j2}\frac{\delta^{2k}}{(2k)!}\frac{(n-\frac j2)_{k}}{(\frac j2-k)!} &\text{if $j$ even}\point
   \end{dcases}
  \end{align*}
    A straightforward estimate shows that for the even and odd case, we obtain the coefficient scaling
   \[
   g(\delta,x)^n=\sum_{j=0}^{2n}x^j\begin{dcases}
   (n)_{\lceil\frac j2\rceil}\sum_{k=0}^{\lfloor\frac j2\rfloor}\delta^{2k+1}\mathcal O(n^k) &\text{if $j$ odd}\\
   (n)_{\frac j2}\sum_{k=0}^{\frac j2}\delta^{2k}\mathcal O(n^k) &\text{if $j$ even}\comma
   \end{dcases}
   \]
   which means that e.g. picking $\delta=\mathcal O(1/n^2)$ is enough to exponentially suppress the higher order combinatorial factors.

  	We will now separately address the even and odd case---\prettyref{eq:ineqeven} and \ref{eq:ineqodd}.
  	\paragraph{Even Case} As the constant coefficients $c_{j,0}=\mathcal O(1)$ in $\delta$, it is the same as for $g(\delta,x)^n$ and by \prettyref{eq:coefficientwise}, we immediately get
  	\[
  	\frac{|c_{j,2}+\ldots+c_{j,j}|}{c_{j,0}}=\mathcal O(\delta)\point
  	\]

  	\paragraph{Odd Case}  Note that if $\Sigma<0$, we are done, so assume $\Sigma>0$ in the following. A simple combinatorial argument gives
  	\[
  	c_{j,1}={n-1\choose(j-1)/2}\Sigma\comma
  	\]
  	so it remains to show that $c_{j,1}>-c_{j,3}-\ldots-c_{j,j}$. Analogously to the even case, by \prettyref{eq:coefficientwise}, we conclude
  	\[
  	\frac{|c_{j,3}+\ldots+c_{j,j}|}{c_{j,1}}=\mathcal O(\delta)\comma
  	\]
  	which finalizes our proof.
\end{proof}

\subsubsection{m-Support Decomposability}\label{sec:msupp}
In the next two sections we will generalize the last result to \textsc{Distribution Decomposability}$_m$. As a first observation, we note the following.
\begin{lemma}\label{lem:msupp_p}
Let $f(n)$ be such that $(f(n)\beta(f(n),n+1-f(n)))^{-1}=\mathcal O(\map{poly}(n))$. Then \textsc{Distribution Decomposability}$_{f(|\Cdot|)}\in\pp$.
\end{lemma}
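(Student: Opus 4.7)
The plan is to reduce via \prettyref{lem:divtopoly} to a polynomial factorization question: the distribution $\mathfrak{D}$ admits the desired decomposition precisely when $f_\mathfrak{D}=g\cdot h$ with $g,h\in\mathcal R$ and $|\supp g|=m$, where $m=f(n)$ and $n$ denotes the degree of $f_\mathfrak{D}$ (equivalently the support size of $X$ after alignment via \prettyref{lem:aligned2}). For positive integers the standard identity $m\,\beta(m,n+1-m)=1/\binom{n}{m}$ rewrites the hypothesis as $\binom{n}{m}=\mathcal O(\map{poly}(n))$, i.e.\ there are only polynomially many cardinality-$m$ subsets of $\{0,1,\ldots,n\}$.

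The algorithm I would run then enumerates all candidate supports $S\subseteq\{0,1,\ldots,n\}$ with $|S|=m$ and $0\in S$, and for each tries to realise a factor $g=\sum_{s\in S}a_s x^s$ with $a_s>0$. The per-candidate check matches coefficients in the identity $gh=f_\mathfrak{D}$: after normalising $a_0=1$, the equations at indices $i\notin S$ with $i\le\deg g$ and at all $i>\deg g$ successively determine $h_i$ as affine functions of the remaining unknowns $a_{s_2},\ldots,a_{s_m}$, while the $m-1$ equations at indices $s_2,\ldots,s_m\in S$ furnish polynomial constraints on these unknowns. Non-negativity of the resulting $g$ and $h$ then amounts to a set of sign constraints on the extracted coefficients.

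The main obstacle is showing that this per-candidate verification runs in polynomial time. The hypothesis $\binom{n}{m}=\mathcal O(\map{poly}(n))$ forces $m=\mathcal O(\log n/\log\log n)$, or its near-complementary regime which is handled symmetrically by swapping the roles of $g$ and $h$. With this polylogarithmic number of free parameters, the polynomial system can be resolved by cascaded substitution---each successive constraint becomes univariate once the earlier parameters are fixed---or, alternatively, by first factoring $f_\mathfrak{D}$ over $\mathbb R[x]$ with the methods of \prettyref{th:rootp} and matching products of irreducible factors against the prescribed support $S$. Combining the polynomial-length enumeration of candidate supports with the polynomial-time per-candidate verification, and invoking \prettyref{lem:npcontainment2} for the trivial \no-certificates, places \textsc{Distribution Decomposability}$_{f(|\Cdot|)}$ in \pp.
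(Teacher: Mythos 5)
Your reduction and counting are in line with the paper's (very terse) argument: the identity $m\,\beta(m,n+1-m)=1/\binom{n}{m}$ correctly turns the hypothesis into ``$\binom{n}{m}=\mathcal O(\map{poly}(n))$'', and enumerating the polynomially many candidate supports of the small factor (note they should be subsets of $\supp\map p_X$, not of $\{0,\dots,\deg f_\mathfrak D\}$ --- the degree need not equal the support size when the pmf has internal zeros) is exactly the ``scaling argument for $\binom{n}{f(n)}$'' plus factorization machinery of \prettyref{th:rootp} that the paper gestures at. The gap is in the per-candidate verification, which is where all the work lies. Your description of the coefficient matching is incorrect: with $a_0=1$ the recursion $h_i=f_i-\sum_{s\in S\setminus\{0\},\,s\le i}a_s h_{i-s}$ makes each $h_i$ a polynomial of growing degree in the unknowns $a_{s_2},\dots,a_{s_m}$, not an affine function, and the residual system is not ``$m-1$ equations at indices in $S$'' but roughly $\deg g$ coupled high-degree polynomial constraints in those unknowns. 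Consequently ``cascaded substitution --- each successive constraint becomes univariate once the earlier parameters are fixed'' is not an algorithm: nothing fixes the earlier parameters (a single constraint in several real unknowns has a continuum of solutions), and for a system in $\Theta(\log n/\log\log n)$ variables of degree $\mathrm{poly}(n)$ the known general methods are quasi-polynomial, not polynomial. The fallback you offer --- factor $f_\mathfrak D$ into irreducibles and ``match products of irreducible factors against the prescribed support $S$'' --- is precisely the exponential subset-selection problem that powers the hardness proofs of \prettyref{lem:even} and \prettyref{lem:mdiv}; you give no reason why prescribing the support makes that matching tractable.

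The step can be repaired, but not with the tools you invoke. Observe that $\binom{n}{m}\ge(n/m)^{\min(m,n-m)}$, so the hypothesis in fact forces $m=\mathcal O(1)$ (or $n-m=\mathcal O(1)$, handled by the symmetry you mention), which is much stronger than your $\mathcal O(\log n/\log\log n)$ bound. With a \emph{constant} number of unknown coefficients the per-support test does become polynomial: for instance, any admissible $g$ is determined (generically) by interpolation through $m-1$ of the roots of $f_\mathfrak D$, of which there are only polynomially many $(m-1)$-subsets to try, each followed by a divisibility and nonnegativity check of $g$ and $f_\mathfrak D/g$; alternatively one can decide the residual system as an existential sentence over the reals in a fixed number of variables. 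As written, however, the proposal does not establish the polynomial-time verification and hence does not prove membership in \pp. (A minor additional point: \prettyref{lem:npcontainment2} concerns \np-certificates and plays no role in a deterministic polynomial-time algorithm.)
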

\begin{proof}
See proof of \prettyref{th:rootp}, and an easy scaling argument for ${n\choose f(n)}$ completes the proof. As in \prettyref{rem:ss_m}, this symmetrically extends to \textsc{Distribution Decomposability}$_{|\Cdot|-f(|\Cdot|)}\in\pp$.
% this IS a 1, as for us a linear factor has size 2.
\end{proof}
Observe that $f(n)=n/2$ yields exponential growth, hence the remark is consistent with the findings in \prettyref{sec:even}.

We now regard the general case. As in the last section, we need variants of the \textsc{Subset Sum} problem, which are given in the following two definitions.
\begin{definition}[\textsc{Subset Sum}$_m$, $m\in\sset Z$]\label{def:mss}\leavevmode
  \begin{description}
    \item[Instance.] Multiset $\set S$ of reals with $|\set S|$ even, $l\in\sset R$.
    \item[Question.] Does there exist a multiset $\set T\subsetneq \set S$ with $|\set T|=m$ and such that
    $|\sum_{t\in\set T}t-\sum_{s\in\set S\setminus\set T}s|<l$?
  \end{description}
\end{definition}
\begin{definition}[\textsc{Signed Subset Sum}$_m$]\label{def:sssm}\leavevmode
  \begin{description}
    \item[Instance.] Multiset $\set S$ of positive integers or reals, $x,y\in\sset R:x\le y$.
    \item[Question.] Does there exist a multiset $\set T\subset \set S$ with $|\set T|=m$ and such that
    $x<\sum_{t\in\set T}t-\sum_{s\in\set S\setminus\set T}s<y$?
  \end{description}
\end{definition}
Both are shown to be \np-hard in \prettyref{lem:ssm} and \ref{lem:sssm}, or by the following observation.
In order to avoid having to take absolute values in the definition of \textsc{Subset Sum}$_m$, we reduce it to multiple instances of \textsc{Signed Subset Sum}$_m$, by using the following interval partition of the entire range $(-l, l)$.
\begin{remark}\label{rem:subdiv}
For every $a>0, l>0$, there exists a partition of the interval $(-l-2a,l+2a)=\bigcup_{i=0}^{N-1}(x_i,x_{i+1})$ with suitable $N\in\sset N$ such that $x_{i+1}-x_i=2a$ and
\[(-l,l)=\left(\bigcup_{i=1}^{N-2}(x_i,x_{i+1})\right)\setminus\left((x_0,x_1)\cup(x_{N-1},x_N)\right)\point\]
\end{remark}

This finally leads us to the following result.
\begin{lemma}\label{lem:mdiv}
\textsc{Distribution Decomposability}$_m$ is \np-hard.
\end{lemma}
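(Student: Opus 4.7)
The plan is to mimic the reduction used in Lemma~\ref{lem:even}, replacing \textsc{Even Subset Sum} by \textsc{Subset Sum}$_m$. Using the interval partition from Remark~\ref{rem:subdiv}, a \textsc{Subset Sum}$_m$ instance reduces to polynomially many \textsc{Signed Subset Sum}$_m$ instances, so it suffices to reduce the latter to \textsc{Distribution Decomposability}$_m$; \np-hardness of \textsc{Signed Subset Sum}$_m$ is provided via the chain in \prettyref{fig:reduction2}.

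Given an instance $(\set S,x,y)$ of \textsc{Signed Subset Sum}$_m$ with $|\set S|=N$, apply the linear rescaling of \eqref{eq:bi} to produce a multiset $\set B=\{b_1,\ldots,b_N\}$ with $|b_i|<\delta$, chosen so that a subset $\set T\subsetneq\set B$ satisfies $\sum_{b\in\set T}b>0$ precisely when the corresponding subset of $\set S$ lies in the target range. Define irreducible quadratics $g(b_i,x):=x^2+b_ix+1$ (irreducibility holds for $\delta$ small enough) and form $f_\set B(x):=\prod_{b\in\set B}g(b,x)$, then rescale so that $f_\set B(1)=1$. This defines a finite discrete distribution whose support has size $2N+1$.

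By unique factorization in $\sset R[x]$, every decomposition $f_\set B=g\cdot h$ with $g,h\in\mathcal R$ arises from a partition $\set B=\set B_1\sqcup\set B_2$ via $g=f_{\set B_1}$, $h=f_{\set B_2}$. If $|\set B_1|=k$, then $f_{\set B_1}$ has degree $2k$, and, provided all its coefficients are strictly positive, support size $2k+1$. The positivity analysis from Lemma~\ref{lem:even}---bounding the coefficients $c_{j,\ell}$ of $f_{\set B_1}$ in powers of $\delta$ and showing the dominant terms $c_{j,0}$ and $c_{j,1}$ control their signs---transfers verbatim, yielding $f_{\set B_1}\in\mathcal R$ iff $\sum_{b\in\set B_1}b>0$ (with the symmetric statement for $\set B_2$ using $\sum_{b\in\set B_2}b=\sum_{b\in\set B}b-\sum_{b\in\set B_1}b$).

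Choosing the target support size $m:=2k+1$ in the instance of \textsc{Distribution Decomposability}$_m$, the distribution encoded by $f_\set B$ is $m$-support decomposable iff the \textsc{Signed Subset Sum}$_m$ instance is a \yes instance. Combined with \prettyref{lem:npcontainment2}, this establishes \np-completeness. The main obstacle I anticipate is ensuring that constraining the support size of one factor really pins down the cardinality of the corresponding partition block; this is secured by the irreducibility of the $g(b_i,x)$ together with unique factorization, which force each $f_{\set B_i}$ to be a product of exactly $|\set B_i|$ of the $g(b_j,x)$, together with the small-$\delta$ analysis ruling out accidental coefficient cancellations that could shrink $|\supp f_{\set B_i}|$ below $2|\set B_i|+1$. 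The bit complexity of the embedding is polynomial in $N$ by the same reasoning as in \prettyref{lem:even}.
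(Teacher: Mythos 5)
Your overall architecture (quadratic factors $g(b_i,x)=x^2+b_ix+1$, unique factorization pinning each factor to a subset, support size $2k+1$ identifying the cardinality of the block) is the right machinery and matches the paper. But there is a genuine gap at the step you treat as routine: you claim a single affine rescaling as in \prettyref{eq:bi} can be ``chosen so that $\sum_{b\in\set T}b>0$ precisely when the corresponding subset of $\set S$ lies in the target range,'' with the symmetric statement for the complement. The polynomial embedding only ever encodes the two-sided condition ``both blocks have positive sum,'' i.e. $\bigl|\sum_{\set T}-\sum_{\set S\setminus\set T}\bigr|<\Sigma$ with the threshold equal to the \emph{total sum} of the transformed set. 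Under a uniform map $b_i=as_i+c$ with $|\set T|=m$ fixed, the two conditions become $\sum_{t\in\set T}t>-mc/a$ and $\sum_{t\in\set T}t<\Sigma_\set S+(N-m)c/a$, so both endpoints of the encodable window are controlled by the single ratio $c/a$: you have one degree of freedom for two independent endpoint constraints, and a generic \textsc{Signed Subset Sum}$_m$ window $(x,y)$ cannot be matched. (The even case $m=N/2$ is special---there the shift does not move the difference at all, which is exactly why \prettyref{eq:bi} sufficed in \prettyref{lem:even}; your ``transfers verbatim'' hides that this decoupling is lost for $m\neq N/2$.)

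The paper closes precisely this gap with the interval-partition device, used in a different role than in your sketch: \prettyref{rem:subdiv} is not merely there to strip the absolute value, but to rewrite \textsc{Subset Sum}$_m(\set S,l)$ as a Boolean combination (a disjunction over interior windows together with negations of the two boundary windows) of \textsc{Signed Subset Sum}$_m$ instances whose windows all have the one specific width $2a$ that \emph{is} encodable---$a$ is fixed by the explicit formula $a=2(|\set S|l+2m\Sigma_\set S-|\set S|\Sigma_\set S)/(2m-|\set S|)$. Each such window instance is then converted by the element-wise shift $c(m,i)=x_i/(2m-|\set S|)$ into \textsc{Subset Sum}$_m(\set S+c(m,i),\Sigma_{\set S+c(m,i)})$, i.e. an instance whose bound equals its total sum, and only that form is answered by the \prettyref{lem:even} embedding. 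To repair your proof you would either have to reproduce this subdivision-plus-shift argument (accepting that it is a Turing rather than many-one reduction), or supply a genuinely different way to encode an arbitrary window with fixed block size $m$, which your single rescaling does not provide.
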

\begin{proof}
We will show the reduction \textsc{Distribution Decomposability}$_m\longleftarrow$ \textsc{Subset Sum}$_m$. Let $m$ be fixed. Let $(\set S,l)$ be an \textsc{Subset Sum} instance. For brevity, we write $\Sigma_\set S:=\sum_{s\in\set S}s$. Without loss of generality, by \prettyref{cor:optpartlin}, we again assume $\Sigma_\set S\ge0$.
Now define $a:=2 (|\set S|l + 2 m\Sigma_\set S - |\set S| \Sigma_\set S)/(2 m - |\set S|)$.
Using \prettyref{rem:subdiv}, pick a suitable subdivision of the interval $(-l-2a,l+2a)$, such that
\begin{align*}
\textsc{Subset Sum}_m(\set S,l) =&\left(\bigvee\limits_{i=1}^{N-2}\textsc{Signed Subset Sum}_m(\set S,x_i,x_{i+1})\right)\\
&\land\lnot\textsc{Signed Subset Sum}_m(\set S,x_0,x_1)\\
&\land\lnot\textsc{Signed Subset Sum}_m(\set S,x_{N-1},x_N)\point
\end{align*}
One can verify that
\begin{align*}
\textsc{Signed}\ &\textsc{Subset Sum}_m(\set S,x_i-a,x_i+a)\\
&=\textsc{Signed Subset Sum}_m(\set S+c(m,i),-\Sigma_{\set S+c(m,i)},\Sigma_{\set S+c(m,i)})\\
&=\textsc{Subset Sum}_m(\set S+c(m,i),\Sigma_{\set S+c(m,i)})\comma
\end{align*}
where we chose $c(m,i)=x_i/(2m-|\set S|)$. The latter program we can answer using the same argument as for the proof of \prettyref{lem:even}, and the claim follows.
\end{proof}
As a side remark, this also confirms the following well-known fact.
\begin{corollary}
Let $f(n)$ be as in \prettyref{lem:msupp_p}. Then \textsc{Subset Sum}$_{f(|\Cdot|)}\in\pp$.
\end{corollary}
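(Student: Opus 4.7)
The plan is to run the reduction constructed in \prettyref{lem:mdiv} in reverse, using \prettyref{lem:msupp_p} as the engine that solves the target problem in polynomial time. Recall that the proof of \prettyref{lem:mdiv} establishes a polynomial-time many-one reduction \textsc{Subset Sum}$_m \longrightarrow$ \textsc{Distribution Decomposability}$_m$ in which an instance $(\set S, l)$ with $n:=|\set S|$ is first split via \prettyref{rem:subdiv} into $\mathcal O(1)$ instances of \textsc{Signed Subset Sum}$_m$, each of which is then transformed into a polynomially-sized \textsc{Distribution Decomposability}$_m$ instance via the shift-and-rescale construction used for \prettyref{lem:even} (the affine map of \prettyref{eq:bi}, followed by passing to the characteristic polynomial $f_\set B$).

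First I would fix $m := f(n)$ and observe that the parameter $m$ is preserved throughout every step of this reduction: the cardinality constraint $|\set T|=m$ on the subset sum side survives the interval subdivision of \prettyref{rem:subdiv} (which only acts on the real parameter $l$), survives the affine transformation $s_i \mapsto b_i$ (which does not touch cardinalities), and is mapped precisely to the constraint $|\supp(p_{\mathfrak D'})| = m$ on the decomposability side. Hence the hypothesis on $f$ applies uniformly to every intermediate instance produced by the reduction, so the chain collapses to a sequence of \textsc{Distribution Decomposability}$_{f(|\Cdot|)}$ queries.

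Next I would invoke \prettyref{lem:msupp_p}: under the scaling assumption $(f(n)\beta(f(n),n+1-f(n)))^{-1}=\mathcal O(\map{poly}(n))$, each such query is answered in polynomial time. Since the reduction emits only $\mathcal O(1)$ queries, the overall procedure runs in polynomial time, yielding \textsc{Subset Sum}$_{f(|\Cdot|)} \in \pp$.

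There is no genuine obstacle here beyond bookkeeping; the only point worth checking carefully is that the polynomial bit complexity blowup incurred by the affine shift in \prettyref{eq:bi} and the subdivision in \prettyref{rem:subdiv} remains polynomial, but this is immediate since the constants $a$, $c(m,i)$ and $x_i$ appearing in the proof of \prettyref{lem:mdiv} are all rational expressions in $l$, $m$, $|\set S|$ and $\Sigma_\set S$ of polynomial bit length. The corollary thus follows at once from composing the two ingredients already in hand.
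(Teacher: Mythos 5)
Your proposal is correct and follows essentially the same route the paper intends: the corollary is stated there without proof precisely as an immediate consequence ("this also confirms\dots") of composing the reduction \textsc{Subset Sum}$_m\longrightarrow$\textsc{Distribution Decomposability}$_m$ from \prettyref{lem:mdiv} with the polynomial-time algorithm of \prettyref{lem:msupp_p}, which is exactly what you do. The only gloss---one the paper shares---is that the cardinality parameter does not map to support size $m$ on the nose but to a factor of degree $2m$ (support roughly $2m+1$) inside a distribution of support roughly $2|\set S|+1$; since ${n\choose f(n)}=\mathcal O(\map{poly}(n))$ is stable under this doubling, this bookkeeping does not affect the conclusion.
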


\subsubsection{General Decomposability}
We have already invented all the necessary machinery to answer the general case.
\begin{theorem}\label{th:dec2}
\textsc{Distribution Decomposability} is \np-hard.
\end{theorem}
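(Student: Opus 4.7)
The plan is to adapt the construction from \prettyref{lem:even} by dropping the equal-support constraint and reducing directly from the standard (unrestricted-size) \textsc{Subset Sum} problem instead of its even-sized variant. The essential observation is that the unique-factorization argument underpinning \prettyref{lem:even}, together with the coefficient estimates \prettyref{eq:ineqeven} and \prettyref{eq:ineqodd}, nowhere uses the restriction $|\set T| = N/2$: the whole machinery applies to bipartitions of $\set B$ of arbitrary shape.

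Concretely, I would start from an instance $(\set S, l)$ of \textsc{Subset Sum} and transform its elements $s_i \mapsto b_i$ via a translation-plus-rescaling analogous to \prettyref{eq:bi}, choosing the rescaling parameter so that $|b_i| < \delta$ for some $\delta = \mathcal O(1/N^2)$ and so that $(\set S,l)$ is a \yes instance iff there exists a non-trivial bipartition $\set B = \set B_1 \sqcup \set B_2$ with $\sum_{b \in \set B_j} b > 0$ for both $j = 1,2$. I would then form $f_\set B(x) := \prod_{i=1}^N (x^2 + b_i x + 1)$. For $\delta$ small enough, each quadratic factor is irreducible over $\sset R[x]$, and because $\sset R[x]$ is a UFD, any decomposition $f_\set B = g \cdot h$ in $\mathcal R$ must take the form $g = f_{\set B_1}$, $h = f_{\set B_2}$ for some partition. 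The coefficient analysis of \prettyref{lem:even} then yields $f_{\set B_j} \in \mathcal R \Longleftrightarrow \sum_{\set B_j} b > 0$, so decomposability of $f_\set B$ is equivalent to $(\set S,l)$ being a \yes instance. Combined with the \np-containment from \prettyref{lem:npcontainment2}, this establishes \np-completeness, and polynomial bit complexity follows by the same accounting as in \prettyref{lem:even}.

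The main obstacle I anticipate is the singleton-subset boundary case: when $|\set B_j| = 1$, the corresponding ``factor'' is a single irreducible quadratic $x^2 + b_i x + 1$, which lies in $\mathcal R$ iff $b_i \ge 0$. This is not a spurious solution — it corresponds to a legitimate singleton subset in \textsc{Subset Sum} — but the translation in \prettyref{eq:bi} must be set up carefully so that the equivalence between ``$b_i \ge 0$ and $\sum_{j \ne i} b_j > 0$'' and a genuine \textsc{Subset Sum} witness of size $1$ is preserved, and so that the slack $l$ is comfortably absorbed by the narrow positivity margins available in the coefficient estimates. A secondary, milder technicality is to verify that $\delta$ and the $b_i$ can be chosen with polynomially many bits, which is already guaranteed by the explicit scaling $\delta = \mathcal O(1/N^2)$ and the rescaling argument used in the proof of \prettyref{lem:even}.
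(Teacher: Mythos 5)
Your overall strategy---reusing the polynomial embedding $f_{\set B}(x)=\prod_i(x^2+b_ix+1)$ and the coefficient estimates of \prettyref{lem:even}, which indeed nowhere use $|\set T|=N/2$---is the right one, and it is essentially what the paper does. The gap is in your very first step: the claimed ``translation-plus-rescaling analogous to \prettyref{eq:bi}'' does not exist once the sizes of the two parts are unconstrained. Writing $b_i=as_i+c$, a bipartition into parts of sizes $k$ and $N-k$ has partial sums $a\Sigma_{\set T}+ck$ and $a\Sigma_{\set S\setminus\set T}+c(N-k)$, so the requirement ``both positive'' is the window $-ck/a<\Sigma_{\set T}<\Sigma_{\set S}+c(N-k)/a$, whose endpoints depend on the unknown size $k$. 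Matching this with the \textsc{Subset Sum} window $(\Sigma_{\set S}-l)/2<\Sigma_{\set T}<(\Sigma_{\set S}+l)/2$ for every admissible $k$ forces $c=0$ and $l=\Sigma_{\set S}$. This is precisely why the shift in \prettyref{eq:bi} is legitimate only under the equal-support constraint (\prettyref{lem:rescale} explicitly needs $|\set T|=|\set S|/2$), and why \prettyref{lem:mdiv} has to use the size-dependent shifts $c(m,i)$ for the $m$-support variant. So for a general bound $l$ your reduction, as stated, is not sound; your anticipated ``singleton-subset'' issue is not the real obstacle (a size-one part is just one quadratic factor and is handled by the same sign analysis).

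The repair is the paper's route: do not start from general \textsc{Subset Sum}, but from the special instances with $l=\Sigma_{\set S}$, for which $|\Sigma_{\set T}-\Sigma_{\set S\setminus\set T}|<\Sigma_{\set S}$ is literally equivalent to both partial sums being strictly positive; then no translation is needed at all, and only the positive rescaling of \prettyref{cor:optpartlin} is used to make $|b_i|<\delta$ before invoking the machinery of \prettyref{lem:even} for subsets of arbitrary size. These restricted instances remain \np-hard because \textsc{Subset Sum}$(\Cdot,\Sigma_{\CDot})$ is equivalent to \textsc{Partition} (\prettyref{lem:optpartsum}); note that the hard instances produced there contain negative elements, so the condition is not vacuous. (If you insist on handling a general bound $l$, the adjustment must be made by augmenting the multiset, as in \prettyref{lem:optpartsum}, not by an element-wise affine map.) With that substitution the rest of your argument---irreducibility of the quadratics, unique factorization in $\sset R[x]$ forcing $g=f_{\set B_1}$, $h=f_{\set B_2}$, positivity of $f_{\set B_j}$ iff $\Sigma_{\set B_j}>0$, and containment in \np from \prettyref{lem:npcontainment2}---goes through as in the paper.
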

\begin{proof}
Follows immediately from \prettyref{lem:even}, where we regard the special set of \textsc{Subset Sum} instances for which $(\set S,l)$ is such that $l=\sum_{s\in\set S}s$. We show in \prettyref{lem:optpartsum} that \textsc{Subset Sum}$(\Cdot,\Sigma_{\CDot})$ is still \np-hard, thus the claim follows.
\end{proof}

\subsubsection{Decomposability with Variation}
As a further intermediate result---and analogously to \prettyref{def:fdddive}---we need to allow for a margin of error $\epsilon$.
\begin{definition}[\textsc{Distribution Decomposability}$_\epsilon$]\label{def:fdddece}\leavevmode
	\begin{description}
		\item[Instance.] Finite discrete random variable $X\sim\mathfrak D$ with pmf $\map p_X(k)$.
		\item[Question.] Do there exist finite discrete random variables $Z_1\sim\mathfrak D', Z_2\sim\mathfrak D''$ with pmfs $\map p_{Z_1}(k)$, $\map p_{Z_2}(k)$, such that $\|\map p_{Z_1}\ast\map p_{Z_2}-\map p_X\|_\infty<\epsilon$?
	\end{description}
\end{definition}
This definition leads us to the following result.
\begin{lemma}\label{lem:fddde}
\textsc{Distribution Decomposability}$_\epsilon$ is \np-hard.
\end{lemma}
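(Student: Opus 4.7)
The plan is to adapt the reduction of Theorem \ref{th:dec2} from \textsc{Subset Sum}$(\cdot,\Sigma_\cdot)$ to \textsc{Distribution Decomposability}, pushing the construction through an $\epsilon$ tolerance. Following the reduction tree in \prettyref{fig:reduction2}, I would reduce from \textsc{Subset Sum}$(\cdot,\Sigma_\cdot+\map{poly}\epsilon)$, which inherits \np-hardness from \textsc{Partition} by the same rescaling argument used in \prettyref{lem:rescale}: integer inputs can be scaled large enough that a $\map{poly}(\epsilon)$ slack lies strictly below the minimal gap between distinct subset sums, so no spurious \yes instances arise.

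The construction itself would reuse the polynomial encoding of \prettyref{lem:even} verbatim: given an instance $(\set S,l)$ with $l=\Sigma_\set S+\map{poly}(\epsilon)$, form the rescaled set $\set B=\{b_1,\dots,b_N\}$ of \prettyref{eq:bi} with a small scaling parameter $a$, build $f_\set B(x)=\prod_{i=1}^N(x^2+b_ix+1)$, and take the pmf $\map p_X$ obtained by normalizing $f_\set B$ to $f_\set B(1)=1$. In the \yes direction, if $(\set S,l)$ is satisfiable then $f_\set B=f_{\set B_1}\cdot f_{\set B_2}$ for subsets as in \prettyref{eq:subsetineq}, yielding an exactly decomposable $\map p_X$, so the $\epsilon$-witness is $\map p_X$ itself.

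The hard direction, and the main obstacle, is the \no case: I need to show that no pmf $\tilde{\map p}_X$ with $\|\tilde{\map p}_X-\map p_X\|_\infty<\epsilon$ can be decomposable. Since $\sset R[x]$ is a unique factorization domain and the quadratics $x^2+b_ix+1$ are irreducible for sufficiently small $b_i$, any candidate nonnegative factorization $\tilde f=g\cdot h$ corresponds, via continuity of polynomial roots in their coefficients, to a grouping of perturbed irreducible quadratic factors $x^2+\tilde b_ix+1$ with $|\tilde b_i-b_i|=\mathcal O(\map{poly}(\epsilon))$. Nonnegativity of both $g$ and $h$ forces both partial sums $\sum_{\set B_1}\tilde b_i$ and $\sum_{\set B_2}\tilde b_i$ to be nonnegative (by the linear coefficient analysis in \prettyref{lem:even}), so the perturbed $\tilde{\set B}$ must admit a split whose deficit is bounded by the same $\map{poly}(\epsilon)$, contradicting the \textsc{Subset Sum}$(\cdot,\Sigma_\cdot+\map{poly}\epsilon)$ assumption.

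The technical work therefore lies in (i) making the root-continuity bound explicit, so that an $\epsilon$ deviation in pmf coefficients translates linearly (up to factors of $N$ and $a$) into a $\map{poly}(\epsilon)$ deviation in the recovered $\tilde b_i$; and (ii) choosing $a$ small enough that the suppression argument of \prettyref{lem:even} on the higher-order combinatorial coefficients $c_{j,k}$ of $f_\set B$ survives the $\epsilon$-perturbation, so that any nonnegative factor $g$ of $\tilde f$ still has the sign structure forced by \prettyref{eq:ineqeven} and \ref{eq:ineqodd}. Together with \prettyref{lem:npcontainment2} for \np-containment, this yields \np-hardness of \textsc{Distribution Decomposability}$_\epsilon$.
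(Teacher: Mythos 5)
Your proposal follows essentially the same route as the paper's proof: reduce from \textsc{Subset Sum}$(\Cdot,\Sigma_{\CDot}+\map{poly}\,\epsilon)$, encode the instance via the product of irreducible quadratics $x^2+b_ix+1$ from \prettyref{lem:even}, and use unique factorization in $\sset R[x]$ together with continuity of roots in the coefficients (the paper invokes the Vi\`ete-map isomorphism) to turn any $\epsilon$-close decomposable pmf into an $\mathcal O(\epsilon)$-perturbed subset split, exactly as in \prettyref{eq:fddweq}. The only substantive difference is your justification of \np-hardness of the slack variant of \textsc{Subset Sum}: the rescaling sketch is shaky because scaling the inputs scales $\Sigma_{\CDot}$ together with all subset-sum differences and so does not by itself exclude boundary cases with $|d|=\Sigma_{\CDot}$; it is cleaner to invoke \prettyref{lem:optpartsumex}, which the paper proves by adjoining the shifted elements $-\Sigma_\set S/2-f(\epsilon)/2$.
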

\begin{proof}
First observe that we can restate this problem in the following equivalent form. Given a finite discrete distribution $\mathfrak D$ with characteristic polynomial $f_\mathfrak D$, do there exist two finite discrete distributions $\mathfrak D', \mathfrak D''$ with characteristic polynomials $f_\mathfrak{D'},f_\mathfrak{D''}$ such that $\|f_\mathfrak D-f_\mathfrak{D'} f_\mathfrak{D''}\|_d<\epsilon$? Here, we are using the maximum norm from \prettyref{def:charpolynorm}, and assume without loss of generality that $\deg f_\mathfrak D=\deg f_\mathfrak{D'}\deg f_\mathfrak{D''}$.

As $f_\mathfrak D$ is a polynomial, we can regard its \knownth{Viète} map $v:\sset C^n\longrightarrow\sset C^n$, where $n=\deg f_\mathfrak D$, which continuously maps the polynomial roots to its coefficients. It is a well-known fact---see \cite{Whitney1972} for a standard reference---that $v$ induces an isomorphism of algebraic varieties $w:\sset A_k^n/\set S_n\isomap\sset A_k^n$, where $\set S_n$ is the $n\th$ symmetric group. This shows that $w^{-1}$ is polynomial, and hence the roots of $f_\mathfrak{D'} f_\mathfrak{D''}$ lie in an $\mathcal O(\epsilon)$-ball around those of $f_\mathfrak D$. By a standard uniqueness argument we thus know that if $f_\mathfrak D=\prod_i f_i$ with $f_i=x^2+b_ix+1$ as in the proof of \prettyref{lem:even}, then $f_\mathfrak{D'}=\prod_i g_i$ with $g_i=a_i x^2+b'_ix+c_i$, where $a_i=c_i=1+\mathcal O(\epsilon)$, $b'_i=b_i+\mathcal O(\epsilon)$---we again implicitly assume the limit $\epsilon\rightarrow0$.
% this might require the roots to be unique, which we can assume WLOG

We continue by proving the reduction \textsc{Distribution Divisibility}$_\epsilon\longleftarrow$ \textsc{Subset Sum}$(\Cdot,\Sigma_{\CDot}+\map{poly}\,\epsilon)$, which is \np-hard as shown in \prettyref{lem:optpartsumex}. Let $\set S=\{s_i\}_{i=1}^N$ be a \textsc{Subset Sum} multiset. We claim that it is satisfiable if and only if the generated characteristic function $f_\set S(x)$---where we used the notation of the proof of \prettyref{lem:even}---defines a finite discrete probability distribution and the corresponding random variable $X$ is a \textsc{Yes} instance for \textsc{Distribution Divisibility}$_\epsilon$.

First assume $f_\set S$ is such a \textsc{Yes} instance. Then $\sum_{s\in\set S}s\ge0$, and there exist two characteristic polynomials $g=\prod_i g_i$ and $h=\prod_i h_i$ as above and such that $\|f_\set S-gh\|_d<\epsilon$. We also know that if $g_i=a_ix^2+b_ix+c_i$, then $\exists\set T\subsetneq\set S$ such that $\{b_i\}_i\in\set B_{\epsilon}(\set T)\subseteq\sset R^{|\set T|}$, where $\set T\subsetneq\set S$ and $\set B_\epsilon(\set T)$ denotes an $\epsilon$ ball around the set $\set T$, and analogously for $h_i=a'_ix^2+b'_ix+c'_i$, with $\{b'_i\}_i\in\set B_{\epsilon}(\set S\setminus\set T)\subseteq\sset R^{|\set S|-|\set T|}$. Regarding the linear coefficients, we thus have
\begin{align}
\begin{split}\label{eq:fddweq}
\left|\sum_{s\in\set S}s-\sum_{t\in\set T}t-\sum_{s\in\set S\setminus\set T}s\right|&=\left|\sum_{s\in\set S}s-\sum_{i=1}^{|\set T|} b_i-\sum_{i=1}^{|\set S\setminus\set T|} b'_i+\mathcal O(\epsilon)\right|\\
&\le\mathcal O(\epsilon)\le\sum_{s\in\set S}s+\mathcal O(\epsilon)\point
\end{split}
\end{align}

Now the case if $f_\set S$ is a \textsc{No} instance. Assume there exists a nontrivial multiset $\set T\subsetneq\set S$ satisfying
\[
\left|\sum_{t\in\set T}t-\sum_{s\in\set S\setminus\set T}s\right|<\sum_{s\in\set S}s+\mathcal O(\epsilon)\point
\]
Then by construction $\sum_{t\in\set T}t,\sum_{s\in\set S\setminus T}s\ge-\mathcal O(\epsilon)$ and $f_\set T\cdot f_{\set S\setminus\set T}=f_\set S$, contradiction, and the claim follows.
\end{proof}

\subsubsection{Weak Decomposability}
Analogously to \prettyref{sec:weakdiv}, we now regard the weak membership problem of decomposability.
\begin{theorem}
\textsc{Weak Distribution Decomposability}$_\epsilon$ is \np-hard.
\end{theorem}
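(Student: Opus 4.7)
The plan is to reduce \textsc{Distribution Decomposability}$_\epsilon$ to \textsc{Weak Distribution Decomposability}$_\epsilon$, mirroring the containment argument used for \prettyref{th:weakdiv}. Since \textsc{Distribution Decomposability}$_\epsilon$ is \np-hard by \prettyref{lem:fddde}, hardness of the weak version will follow, while \np-containment is already covered by \prettyref{lem:npcontainment2}.

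First I would match the \yes-witnesses of the two problems via the identity map: a pair $Z_1,Z_2$ with $\|\map p_{Z_1}\ast\map p_{Z_2}-\map p_X\|_\infty<\epsilon$ yields an $\epsilon$-close decomposable $Y$ with pmf $\map p_Y:=\map p_{Z_1}\ast\map p_{Z_2}$, while conversely any $\epsilon$-close decomposable $Y$ factors as $Z_1+Z_2$ with $\map p_Y=\map p_{Z_1}\ast\map p_{Z_2}$. Hence the \yes-sets of the exact and weak formulations coincide, so the instances produced by \prettyref{lem:fddde} from a satisfiable \textsc{Subset Sum}$(\Cdot,\Sigma_{\CDot}+\map{poly}\,\epsilon)$ instance immediately witness the weak \yes-branch: in that case $X=f_\set S$ is itself exactly decomposable and therefore trivially $\epsilon$-close to a decomposition.

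Second, I would handle the \no-side using the robustness already established inside \prettyref{lem:fddde}. When the underlying subset sum instance is unsatisfiable, the V\`iete-continuity argument from that lemma shows no polynomial factorization $gh$ lies within $\epsilon$ of $f_\set S$---any such approximation would lift back to a subset sum solution within the $\mathcal O(\epsilon)$ slack, contradicting unsatisfiability. Consequently no $\epsilon$-close $Y$ to $X$ is decomposable, and $X$ itself serves as an indecomposable $\epsilon$-close witness, forcing the weak oracle's \no branch.

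The main subtlety I expect is ensuring the weak formulation's decision is uniquely determined on the reduced instances: the weak oracle is, in principle, free to answer \no even when some $\epsilon$-close decomposable $Y$ exists, provided an indecomposable one does as well. This is resolved precisely by the gap structure already built into \prettyref{lem:fddde}---the \no-side robustness comes from the V\`iete argument, and the \yes-side is immediate because $X$ itself is an exact factorization---so the reduction pushes through and inherits all polynomial sizing and bit-complexity bounds from \prettyref{lem:fddde} without further work.
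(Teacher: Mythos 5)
Your overall strategy is the paper's: use the \np-hardness of the $\epsilon$-margin problem from \prettyref{lem:fddde} and show that a weak-membership oracle decides those instances. You also correctly identify the crux---the weak oracle of \prettyref{def:wfdddec} may legitimately answer \no whenever \emph{some} $\epsilon$-close indecomposable $Y$ exists, even if an $\epsilon$-close decomposable $Y$ exists too. But your resolution of that crux is where the gap lies, and you have the two sides backwards. On the \no side your argument is fine (and essentially automatic): if no factorization lies within $\epsilon$ of $\map p_X$, then no $\epsilon$-close $Y$ is decomposable, so the \yes branch is unavailable and the oracle must answer \no. On the \yes side, however, ``$X$ itself is an exact factorization, hence trivially $\epsilon$-close to a decomposition'' only establishes that the oracle's \yes branch is \emph{available}; it does nothing to exclude the \no branch. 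If a single indecomposable distribution sits within $\epsilon$ of $X$, the oracle may answer \no on a \yes instance and your reduction no longer distinguishes the two cases. Exhibiting one decomposable point in the ball cannot close this; you must show the \emph{entire} $\epsilon$-ball around the constructed \yes instances contains only decomposable distributions.

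That stronger statement is exactly what the paper proves, and it is not immediate: it reuses the Vi\`ete-continuity argument of \prettyref{lem:fddde} in the \emph{opposite} direction from how you deploy it. Any $Y$ with $\|\map p_Y-\map p_X\|_\infty<\epsilon$ has characteristic polynomial whose quadratic factors are $\mathcal O(\epsilon)$-perturbations $a_ix^2+b_i'x+c_i$ of the factors $x^2+b_ix+1$ of $f_{\set S}$; this identifies $Y$ with $f_{\set S'}$ for $\set S'=\{s+\mathcal O(\epsilon):s\in\set S\}$, and because the underlying \textsc{Subset Sum}$(\Cdot,\Sigma_{\CDot}+\map{poly}\,\epsilon)$ instance carries a $\map{poly}\,\epsilon$ slack (\prettyref{lem:optpartsumex}), the perturbed instance is still satisfiable and the perturbed grouping still yields two nonnegative factors, via \prettyref{eq:fddweq}. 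Hence every $Y$ in the ball is decomposable, the \no branch is unavailable, and the oracle is forced to answer \yes. (This margin-matching is also why the paper reduces from \textsc{Distribution Decomposability}$_{\map g(\epsilon)}$ with $\map g=\mathcal O(\epsilon)$ rather than literally the same $\epsilon$, a point your proposal glosses over.) Without this \yes-side stability argument the reduction does not go through, so as written the proof has a genuine gap.
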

\begin{proof}
In order to show the claim, we prove the reduction \textsc{Weak Distribution Decomposability}$_\epsilon\longleftarrow$\textsc{Distribution Decomposability}$_{\map g(\epsilon)}$, where the function $\map g=\mathcal O(\epsilon)$. It is clear that the polynomial factor leaves the \np-hardness of the latter program intact.

We use the same notation as in the proof of \prettyref{lem:fddde}. Let $f_\set S$ be a \textsc{Yes} instance of \textsc{Distribution Decomposability}$_\epsilon$, and define $\set S':=\{s+\mathcal O(\epsilon): s\in\set S\}$. From \prettyref{eq:fddweq} it immediately follows that then $f_{\set S'}$ is a \textsc{Yes} instance of \textsc{Distribution Decomposability}$_{\map g(\epsilon)}$, where we allow $\map g=\mathcal O(\epsilon)$. We have hence shown that there exists an $\mathcal O(\epsilon)$ ball around each \textsc{Yes} instance that \emph{solely} contains \textsc{Yes} instances.

A similar argument holds for the \textsc{No} instances. It is clear that these cases can be answered using \textsc{Weak Distribution Decomposability}$_\epsilon$, and the claim follows.
\end{proof}

\subsubsection{Complete Decomposability}\label{sec:completedec}
Another interesting question to ask is for the complete decomposition of a  finite distribution $\mathfrak D$ into a sum of indecomposable distributions. We argue that this decomposition is not unique.

\begin{proposition}\label{prop:counterex}
There exists a family of finite distributions $(\mathfrak D_n)_{n\in\sset N}$ with probability mass functions $\map p_n(k):\max\supp\map p_n(k)=4n$ and such that, for each $\mathfrak D_n$, there are at least $n!$ distinct decompositions into indecomposable distributions.
\end{proposition}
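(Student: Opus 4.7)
The plan is to translate, via \prettyref{lem:divtopoly}, the statement into a question about polynomial factorizations in $\mathcal{R}$, and then to exhibit a polynomial $f_n\in\mathcal{R}$ of degree $4n$ admitting at least $n!$ distinct factorizations into $\mathcal{R}$-indecomposables. The idea is to arrange the indecomposable factors on an $n\times n$ grid: set
\[
A_i(x):=1-\alpha_i x+x^2,\qquad B_j(x):=1+\beta_j x+x^2,\qquad P_{ij}(x):=A_i(x)B_j(x),
\]
for pairwise distinct $\alpha_1,\ldots,\alpha_n\in(0,\tfrac{1}{2})$ and $\beta_1,\ldots,\beta_n\in(1,2)$ (e.g.\ $\alpha_i=1/(n+i+1)$, $\beta_j=1+1/(n+j+1)$), and let $f_n:=\prod_{i=1}^n P_{ii}=\prod_{i=1}^n A_i\cdot\prod_{j=1}^n B_j$. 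Commutativity then produces, for every $\sigma\in S_n$, the factorization $f_n=\prod_{i=1}^n P_{i,\sigma(i)}$, which will be our candidate decomposition indexed by~$\sigma$.

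The bulk of the work is verifying three properties of the $P_{ij}$. \textbf{(i) Positivity.} Direct expansion gives $P_{ij}(x)=1+(\beta_j-\alpha_i)x+(2-\alpha_i\beta_j)x^2+(\beta_j-\alpha_i)x^3+x^4$, and the window constraints yield $\beta_j-\alpha_i>\tfrac{1}{2}$ and $\alpha_i\beta_j<1$, so $P_{ij}\in\mathcal{R}$. \textbf{(ii) Indecomposability in $\mathcal{R}$.} Since $|\alpha_i|,|\beta_j|<2$, both $A_i$ and $B_j$ have complex-conjugate roots and are irreducible over $\sset{R}[x]$; by unique factorization, the only nontrivial $\sset{R}[x]$-factorization of $P_{ij}$ into positive-degree factors is, up to constants, $P_{ij}=A_i\cdot B_j$. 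But $A_i$ carries the negative coefficient $-\alpha_i$, so no $\mathcal{R}$-factorization exists. \textbf{(iii) Injectivity of $(i,j)\mapsto P_{ij}$.} Comparing the linear and quadratic coefficients of $P_{ij}$ and $P_{i'j'}$ and substituting $\beta_j=\beta_{j'}+\alpha_i-\alpha_{i'}$ yields $(\alpha_i-\alpha_{i'})(\beta_{j'}+\alpha_i)=0$; since $\beta_{j'}+\alpha_i>0$, we get $i=i'$ and hence $j=j'$.

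From (iii), distinct permutations $\sigma\neq\sigma'$ yield distinct multisets $\{P_{i,\sigma(i)}\}_{i=1}^n$, because any coincidence $P_{i,\sigma(i)}=P_{i',\sigma'(i')}$ forces $(i,\sigma(i))=(i',\sigma'(i'))$, so agreement of multisets implies $\sigma(i)=\sigma'(i)$ for all $i$. Rescaling $\tilde f_n:=f_n/f_n(1)\in\mathcal{R}$ identifies $\tilde f_n$ with the characteristic polynomial of a finite discrete distribution $\mathfrak D_n$ with $\max\supp\map p_n=\deg f_n=4n$, and by \prettyref{lem:divtopoly} the $n!$ polynomial factorizations descend to $n!$ distinct decompositions of $\mathfrak D_n$ into indecomposable distributions. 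The main obstacle is step (ii): ruling out \emph{every} alternative $\mathcal{R}$-factorization of the degree-$4$ polynomial $P_{ij}$, not merely the obvious one. The key leverage is control of the windows—$|\alpha_i|,|\beta_j|<2$ forces irreducibility of $A_i$ and $B_j$ over $\sset{R}[x]$ (so no deg-$1$ factor can appear), leaving only the forbidden factorization, while $\beta_j>\alpha_i$ and $\alpha_i\beta_j<2$ simultaneously keep each $P_{ij}\in\mathcal{R}$.
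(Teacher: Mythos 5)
Your proposal is correct and follows essentially the same route as the paper: pair irreducible quadratics with a negative linear coefficient (your $A_i$, the paper's $n_k$) against positive ones (your $B_j$, the paper's $p_k$) so that every cross product is a nonnegative but $\mathcal R$-indecomposable quartic, and then use unique factorization in $\sset R[x]$ together with the $n!$ pairings. Your parameter windows and the explicit verification of positivity, indecomposability and distinctness are just a fleshed-out version of what the paper declares straightforward.
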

\begin{proof}
\begin{figure}[t]
\centering
\subfigure[complex roots of characteristic polynomial of $\mathfrak G_3$]{\includegraphics[height=6cm]{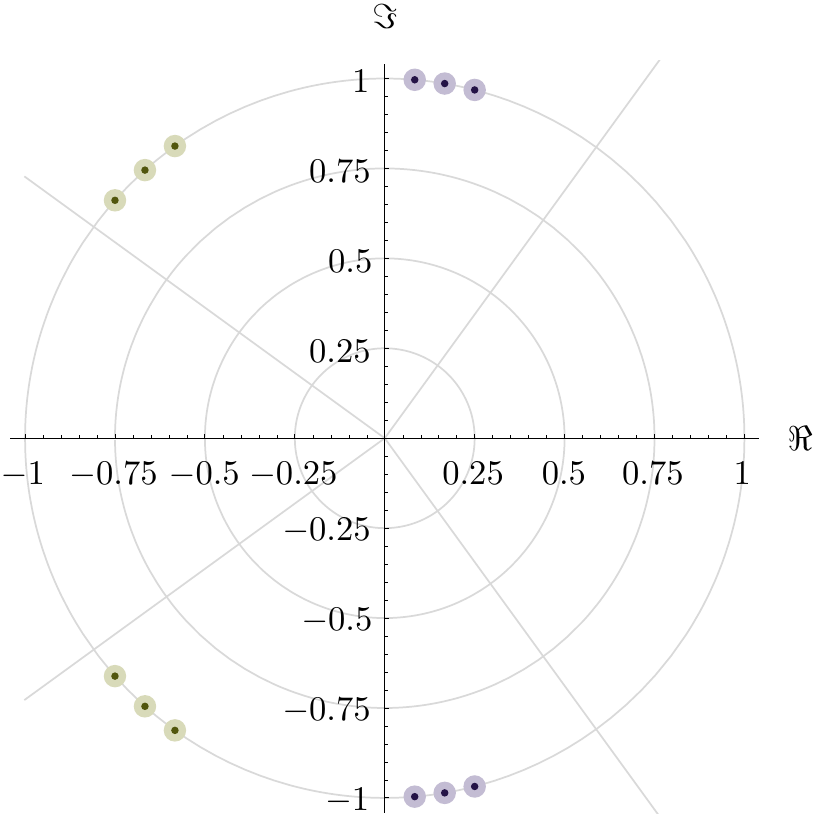}}\hspace{.8em}
\subfigure[probability mass function for $\mathfrak G_3$]{\includegraphics[height=3.6cm]{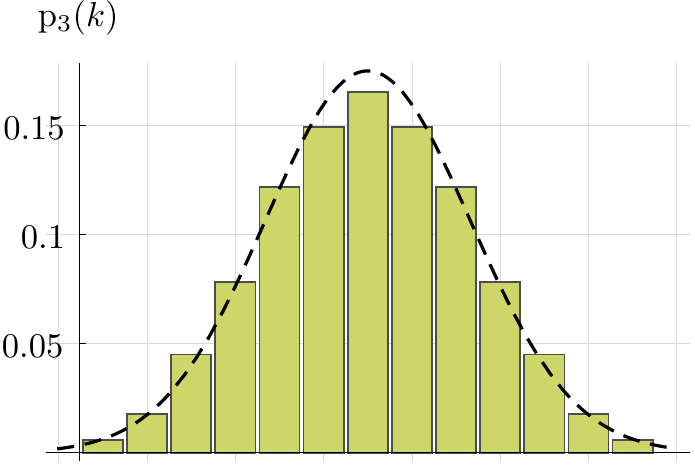}}
\caption{Counterexample construction of \prettyref{prop:counterex}. The dashed line shows a normal distribution for comparison.}
\label{fig:counterex}
\end{figure}
We explicitly construct the family $(\mathfrak D_n)_{n\in\sset N}$. Let $n\in\sset N$. We will define a set of irreducible quadratic polynomials $\{p_k,n_k\ \text{for}\ k=1,\ldots,n\}$ such that $n_k$ are \emph{not} positive, but $p_kn_l$ are positive quartics $\forall k,l$---and thus define valid probability distributions. Since $\sset R[x]$ is a unique factorization domain the claim then follows.

Following the findings in the proof of \prettyref{lem:even}, it is in fact enough to construct a set $\{a_k,b_k:0<|a_k|<2,-2<b_k<0\ \text{for}\ k=1,\ldots,n\}\subset\sset R^{2n}$ and such that $a_k+b_l>0\ \forall k,l$---then let $p_k:=1+a_kx+x^2$, $n_k:=1+b_kx+x^2$. It is straightforward to verify that e.g.
\[
	a_k:=1+\frac{k}{2n}\mt{and}b_k:=-\frac{k}{2n}
\]
fulfil these properties.
\end{proof}
\begin{remark}\label{rem:counterex}
	Observe that for $b_k:=-k/2n^2$, the construction in \prettyref{prop:counterex} allows decompositions into $m$ indecomposable terms, where $m=n,\ldots,2n$.
\end{remark}
\begin{corollary}
	$\mathcal R$ is not a unique factorization domain.
\end{corollary}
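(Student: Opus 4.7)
The plan is to read off the non-uniqueness of factorization directly from \prettyref{prop:counterex}, once we are clear about what the irreducibles and units of $\mathcal R$ are. First I would pin down the structure of $\mathcal R$: it is the multiplicative monoid of nonnegative-coefficient polynomials modulo positive rescaling, with multiplication well-defined on equivalence classes and $[1]$ as identity. An element $[f]\in\mathcal R$ is a unit iff some representative has a multiplicative inverse with nonnegative coefficients; since a positive nonconstant polynomial times any nonzero polynomial has degree $\ge 1$, the only unit is $[1]$. By \prettyref{lem:divtopoly}, $[f]\in\mathcal R$ is irreducible exactly when the distribution with characteristic polynomial $f$ is indecomposable.

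Next I would invoke \prettyref{prop:counterex} with any fixed $n\ge 2$. This produces a distribution $\mathfrak D_n$ whose characteristic polynomial $f_n\in\mathcal R$ can be written as $f_n=\prod_{k=1}^n p_k n_{\sigma(k)}$ for every permutation $\sigma\in\set S_n$, where each product $p_k n_{\sigma(k)}$ is a nonnegative quartic and the underlying quadratics $p_k=1+a_kx+x^2$, $n_l=1+b_lx+x^2$ are chosen with pairwise distinct parameters $\{a_k\},\{b_l\}$. Each factor $p_k n_{\sigma(k)}$ is irreducible in $\mathcal R$: if it decomposed nontrivially with nonnegative coefficients, unique factorization in $\sset R[x]$ would force that decomposition to be (a rescaling of) $p_k\cdot n_{\sigma(k)}$, but $n_{\sigma(k)}$ has a negative coefficient and is therefore not in $\mathcal R$.

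This yields $n!$ formal factorizations of $f_n$ into $\mathcal R$-irreducibles, and for distinct permutations $\sigma\ne\tau$ the resulting multisets $\{[p_k n_{\sigma(k)}]\}_k$ and $\{[p_k n_{\tau(k)}]\}_k$ are genuinely different in $\mathcal R$: the parameters $(a_k,b_l)$ are distinct, so the polynomials $p_k n_{\sigma(k)}$ and $p_k n_{\tau(k)}$ differ in more than an overall positive scalar, and the only unit available for identifying them is $[1]$. Hence $f_n$ admits at least two inequivalent factorizations into irreducibles, and $\mathcal R$ fails to be a unique factorization domain.

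The only place requiring care is the last sentence — verifying that permuting the pairings really does produce inequivalent factorizations in $\mathcal R$ rather than merely in $\sset R[x]$ — but this reduces to checking that the quartics $p_kn_l$ are pairwise non-proportional for distinct $(k,l)$, which is immediate from the explicit choice of distinct $a_k$ and $b_l$ in \prettyref{prop:counterex}. All substantive work is already contained in the preceding proposition, so the corollary is essentially a bookkeeping statement.
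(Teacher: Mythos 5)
Your proposal is correct and follows the paper's (implicit) argument exactly: the corollary is stated as an immediate consequence of \prettyref{prop:counterex}, with the $n!$ pairings $\prod_k p_k n_{\sigma(k)}$ furnishing inequivalent factorizations of the same element of $\mathcal R$ into $\mathcal R$-irreducibles. Your added bookkeeping---identifying $[1]$ as the only unit, irreducibles with indecomposable distributions, and using unique factorization in $\sset R[x]$ together with the distinctness of the $a_k$, $b_l$ to show the quartics $p_k n_{\sigma(k)}$ are irreducible in $\mathcal R$ and the multisets of factors differ for $\sigma\ne\tau$---is precisely what the paper leaves implicit.
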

Proposition \ref{prop:counterex} and \prettyref{rem:counterex} show that an exponential number of complete decompositions---all of which have different distributions---do not give any further insight into the distribution of interest--indeed, as the number of positive indecomposable factors is not even unique, asking for a non-maximal decomposition into indecomposable terms does not answer more than whether the distribution is decomposable at all.

Indeed, the question whether one \emph{can} decompose a distribution into indecomposable parts can be trivially answered with \yes, but if we include the condition that the factors have to be non-trivial, or for decomposability into a certain number of terms---say $N\ge2$ or the maximum number of terms---the problem is also obviously \np-hard by the previous results.

In short, by \prettyref{th:dec2}, we immediately obtain the following result.
\begin{corollary}
Let $\mathfrak D$ be a finite discrete distribution. Deciding whether one can write $\mathfrak D$ as any nontrivial sum of irreducible distributions is \np-hard.
\end{corollary}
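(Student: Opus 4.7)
My plan is to reduce the problem directly to \textsc{Distribution Decomposability} via a simple equivalence, and then invoke \prettyref{th:dec2}. The key point is that, after translating to characteristic polynomials via \prettyref{lem:divtopoly}, a distribution $\mathfrak D$ is decomposable if and only if $f_\mathfrak D\in\mathcal R$ admits a nontrivial factorization $f_\mathfrak D=gh$ with $g,h\in\mathcal R$ both nonconstant. I will argue that this is the same question as asking for \emph{any} nontrivial factorization into irreducibles over $\mathcal R$.

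First I would show the forward direction: if $\mathfrak D$ admits a nontrivial sum of irreducible distributions, then by definition this gives $\mathfrak D=Z_1+\cdots+Z_k$ with $k\ge2$ and each $Z_i$ nonconstant, so grouping $Z_1$ and $Z_2+\cdots+Z_k$ produces a witness for \textsc{Distribution Decomposability}. For the converse, I would start from a decomposition $\mathfrak D=Z_1+Z_2$ and iteratively factor each nonconstant factor: at the level of characteristic polynomials, each decomposition splits $f_\mathfrak D$ as a product $gh$ in $\mathcal R$, with $\deg g,\deg h\ge 1$ and $\deg g+\deg h=\deg f_\mathfrak D$. The iteration therefore terminates after at most $\deg f_\mathfrak D$ steps in irreducible elements of $\mathcal R$, producing the desired nontrivial sum of irreducibles.

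Since both reductions are trivially poly-time (indeed, the two decision problems answer identically on every input), the \np-hardness of deciding existence of a nontrivial irreducible decomposition follows from \prettyref{th:dec2}. Containment in \np\ is inherited from \prettyref{lem:npcontainment2}, as a witness for decomposability is also a starting point for the iterated decomposition and can be verified in polynomial time.

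The only mild subtlety—and what I would mention explicitly—is that ``irreducible'' here means irreducible \emph{in $\mathcal R$} (the positive-coefficient cone), which is strictly coarser than irreducibility in $\sset R[x]$: a polynomial may be reducible over $\sset R$ yet indecomposable as a distribution because no factoring into nonnegative-coefficient polynomials exists. The equivalence above uses irreducibility in $\mathcal R$ throughout, and this is exactly the notion matching \prettyref{def:dec}, so no additional work is needed. I do not expect any real obstacle beyond making this convention precise.
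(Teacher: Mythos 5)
Your proposal is correct and follows essentially the same route as the paper: the paper also obtains this corollary immediately from \prettyref{th:dec2}, using exactly the observation that asking for a nontrivial sum of indecomposable (i.e.\ irreducible-in-$\mathcal R$) distributions is equivalent to asking for decomposability at all, since any two-term decomposition can be refined to indecomposable factors and any such sum can be grouped back into two terms. Your explicit termination argument via degrees and the remark on irreducibility in $\mathcal R$ versus $\sset R[x]$ simply spell out what the paper treats as immediate.
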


\subsubsection{Continuous Distributions}\label{sec:contdec}
Analogous to our discussion in \prettyref{sec:contdiv}, the exact and $\epsilon$ variants of the decomposability question are computationally ill-posed. We again point out that answering the weak membership version is trivial, since the set of indecomposable distributions is dense, as the following proposition shows.

\begin{proposition}\label{prop:densedec}
	Let $\Ccb^+$ denote the piecewise linear nonnegative functions of bounded support. Then the set of \emph{indecomposable} functions, $\mathcal J:=\{f:\nexists r,s\in\Ccb: f=r\ast s\}$ is dense in $\Ccb$.
\end{proposition}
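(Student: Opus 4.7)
The plan is to mirror the argument of \prettyref{prop:densediv}: first establish a decomposition-theoretic analogue of \prettyref{lem:densediv}, then use it to perturb any $f\in\Ccb$ by a small indecomposable bump located in a well-separated interval.

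The key lemma I would prove is the following. If $h\in\Ccb^+$ has $\supp h\subseteq \set A\cup \set B$ with $\set A:=[0,M]$ and $\set B:=[2M,3M]$, and $h$ is decomposable as $h=r\ast s$ with $r,s\in\Ccb^+$, then both $h|_\set A$ and $h|_\set B$ are themselves decomposable. The proof closely follows \prettyref{lem:densediv}: after shifting so that $\inf\supp r=\inf\supp s=0$ via \prettyref{lem:aligned2}, one sets $\bar r:=r\cdot\mathbf{1}_{[0,M/2]}$ and $\bar s:=s\cdot\mathbf{1}_{[0,M/2]}$ and shows $(\bar r\ast\bar s)(x)=h|_\set A(x)$ for all $x\in\set A$. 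The argument proceeds as in the divisibility case but now with two asymmetric truncations---this is what the asymmetric integration domains in \prettyref{fig:intdom}(b) illustrate. Vanishing of $h$ on $(M,2M)$, combined with nonnegativity of $r,s$, forces the support-crossing contributions to the convolution integral to cancel, and a continuity/contradiction argument (if $r(y')s(y')>0$ for some $y'\in(M/2,M)$, then continuity gives $h(2y')>0$, contradicting $2y'\in(M,2M)$) rules out the only possible sources of discrepancy. The case for $h|_\set B$ follows by reflecting the interval $[0,3M]$ about its midpoint.

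Given the lemma, the proposition is immediate. For $f\in\Ccb^+$ with $\inf\supp f\ge 0$ (by \prettyref{lem:aligned2}) and $M:=\sup\supp f$, pick an indecomposable $j\in\Ccb^+$ with $\supp j\subset(2M,3M)$; such $j$ exist as a classical consequence of decomposability theory---cf.\ \cite{Steutel1979}---and one explicit example is a piecewise-linear density whose characteristic function has irregularly spaced real zeros. Define
\begin{equation*}
g(x):=\begin{cases}f(x)&x\in[0,M],\\ \epsilon\, j(x)/\|j\|_\infty&x\in(2M,3M),\\ 0&\text{otherwise.}\end{cases}
\end{equation*}
Then $\|f-g\|_\infty<\epsilon$, while $g|_\set B=\epsilon j/\|j\|_\infty$ is indecomposable. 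If $g$ were decomposable, the lemma applied to $h=g$ would force $g|_\set B$ to be decomposable, a contradiction. Hence $g\in\mathcal J$, proving density in $\Ccb^+$; the general case $f\in\Ccb$ reduces to this by perturbing the positive part.

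The hard part will be the lemma, specifically the fact that the two convolution factors $r$ and $s$ need no longer share support. Unlike the divisibility case, where $f=r\ast r$ and symmetry immediately pins down $\supp r\subseteq[0,M/2]$, here $r$ and $s$ can have arbitrary support widths $R$ and $S$ with $R+S\le 3M$, and one must rule out configurations in which $\supp r$ or $\supp s$ is itself disconnected across the gap. The required case analysis of the convolution integration domain, while essentially elementary, is notably more delicate than the symmetric case and is where most of the technical work lies. A minor secondary issue is exhibiting an explicit indecomposable element of the restricted class $\Ccb^+$; I would either invoke the classical existence result or sketch a concrete piecewise-linear example to make the argument self-contained.
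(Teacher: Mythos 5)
Your overall strategy---extend \prettyref{lem:densediv} to two distinct convolution factors and then perturb an arbitrary $f$ by a small indecomposable bump supported in $(2M,3M)$---is exactly the route the paper takes. The gap is inside the key lemma: truncating \emph{both} factors at $M/2$, i.e.\ setting $\bar r:=r|_{[0,M/2]}$ and $\bar s:=s|_{[0,M/2]}$, is not correct, and the claimed identity $(\bar r\ast\bar s)(x)=h|_{\set A}(x)$ fails in general, because nothing forces the two factors to have supports of equal width inside $[0,M]$. Concretely, take $r$ a triangle supported on $[0,0.8M]$ and $s$ a narrow bump supported on $[0,0.1M]$; then $h=r\ast s$ satisfies the hypotheses (with $h|_{\set B}\equiv0$), yet $\bar r\ast\bar s$ is supported in $[0,0.6M]$ and misses the mass of $h$ on $(0.6M,0.9M]$. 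Your continuity argument (``$r(y')s(y')>0$ for some $y'\in(M/2,M)$ implies $h(2y')>0$'') only excludes the two factors being \emph{simultaneously} positive at the same point of $(M/2,M)$; in the divisibility case $r=s$, so this does pin $\supp r$ down to $[0,M/2]$, but with $r\neq s$ it does not, and indeed in the example above $s$ simply vanishes on $(M/2,M)$ while $r$ does not. You flag this asymmetry yourself at the end (``the two convolution factors need no longer share support''), but that observation is precisely where your stated truncation breaks, so the technical core of the lemma is missing rather than merely delicate. (Also, a small wording point: with nonnegative $r,s$ there is no ``cancellation'' of support-crossing contributions---they must vanish individually.)

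The paper's fix is an asymmetric cutoff determined by the factors themselves. From $h\equiv0$ on $(M,2M)$, $\inf\supp r=\inf\supp s=0$ and nonnegativity, one first gets $r=s=0$ on $(M,2M)$, and then there exists $m\in(0,M)$ with $r=0$ on $(m,M]$ and $s=0$ on $(M-m,M]$: if $r$ were positive near $m$ and $s$ positive somewhere in $(M-m,M]$, the convolution would be positive somewhere in $(M,2M)$. One then truncates $r$ to $[0,m]$ and $s$ to $[0,M-m]$ and runs your integration-domain/contradiction analysis with these cutoffs---this is what \prettyref{fig:intdom}(b) actually depicts. Note that possible mass of $r$ or $s$ inside $[2M,3M]$ (your ``disconnected support'' worry) is harmless once the truncation is chosen this way, since only the restrictions to $[0,m]$ and $[0,M-m]$ enter the identity on $\set A$. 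The remainder of your argument---the bump construction, the reflection argument for $h|_{\set B}$, and the reduction to $\Ccb^+$---matches the paper; the existence of an indecomposable $j$ with prescribed small support is left at the same level of detail in the paper as in your sketch.
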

\begin{proof}
	We first extend \prettyref{lem:densediv}, and again take $f\in\Ccb^+:\supp f\subset\set A\cup\set B$. While not automatically true that $r(x), s(x)=0\ \forall x<0$, we can assume this by shifting $r$ and $s$ symmetrically. We also assume $\inf\supp f=0$, and hence $\inf\supp r=\inf\supp s=0$---see \prettyref{lem:aligned2} for details.

	Since $f(x)=0\ \forall x\in(M, 2M)$, we immediately get $r(x)=s(x)=0\ \forall x\in(M,2M)$. Furthermore, $\exists m\in(0,M):r(x)=s(y)=0\ \forall x\in(m,M],y\in(M-m,M]$. Analogously to \prettyref{eq:densediv1}, we define
	\begin{equation}\label{eq:densedec1}
	\bar r(x)=\begin{cases}
	r(x) & x\in\set [0,m]\\
	0 & \text{otherwise}
	\end{cases}
	\mt{and}
	\bar s(x)=\begin{cases}
	r(x) & x\in\set [0,M-m]\\
	0 & \text{otherwise}\point
	\end{cases}
	\end{equation}
	The integration domain difference is derived analogously, and can be seen in an example in \prettyref{fig:intdom}. We again regard the two cases separately.

	Let $x\in\set A$. Assume $\exists y'\in(M-m,M)$ such that $r(x-y')s(y')>0$. Then $s(y')>0$, contradiction.
	Now fix $x'\in(m,M)$, and assume $\exists y'\in(0,x'-m):r(x'-y')s(y')>0$. Since $x'-y'>x'-x'+m=m$, $r(x'-y')>0$ yields another contradiction.

	The rest of the proof goes through analogously.
\end{proof}

\begin{corollary}\label{cor:densedec}
	Let $\epsilon>0$. Let $X$ be a continuous random variable with pmf $\map p_X(k)$. Then there exists a indecomposable random variable $Y$ with pmf $\map p_Y(k)$, such that $\|\map p_X-\map p_Y\|<\epsilon$.
\end{corollary}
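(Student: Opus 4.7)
The plan is to mirror the proof of \prettyref{cor:densediv} almost verbatim, swapping out \prettyref{prop:densediv} for \prettyref{prop:densedec}. The point is that once one has density of indecomposable functions in $\Ccb$ (already established), getting density in the space of probability density functions requires only a truncation-plus-renormalization step.

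First I would approximate $\map p_X$ by a compactly supported element of $\Ccb^+$. Since $\map p_X\in L^1(\sset R)$, for any small $\epsilon>0$ there exists $M>0$ and $f_M\in\Ccb^+$ with $\supp f_M\subset(-M,M)$ such that $\|\map p_X-f_M\|<\epsilon/3$, and with total mass $\|f_M\|=1+\delta$ where $|\delta|\le\epsilon/3$. Next I would invoke \prettyref{prop:densedec} to find an indecomposable $g\in\Ccb^+$ with $\|f_M-g\|<\epsilon/3$; by slightly enlarging $M$ if necessary we can assume $g$ still has compact support. Finally I would rescale $\tilde g:=g/\|g\|$ so that $\tilde g$ defines a bona fide probability density, and note that positive rescaling preserves indecomposability (if $\tilde g=r\ast s$ then $g=\|g\|\,r\ast s$ is also decomposable in $\Ccb$, contradiction).

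It only remains to collect the error estimates via the triangle inequality, exactly as in the proof of \prettyref{cor:densediv}: one has
\[
\left\lVert\map p_X-\tilde g\right\rVert\le\left\lVert\map p_X-\frac{f_M}{\|f_M\|}\right\rVert+\left\lVert\frac{f_M}{\|f_M\|}-\tilde g\right\rVert,
\]
and both terms are bounded by a uniform constant times $\epsilon$ by the previous two steps together with $\|f_M\|=1+\mathcal O(\epsilon)$ and $\|g\|=\|f_M\|+\mathcal O(\epsilon)=1+\mathcal O(\epsilon)$. Setting $Y$ to be the random variable with pdf $\map p_Y:=\tilde g$ yields the claim.

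I do not foresee a genuine obstacle: the only subtle point is ensuring that indecomposability is preserved under normalization, but this is immediate since the definition of indecomposability in $\Ccb$ used in \prettyref{prop:densedec} is invariant under positive scaling (convolution is bilinear, so factoring out a constant scalar does not change the decomposition structure). One should be careful that the approximant $g$ produced by \prettyref{prop:densedec} lies in $\Ccb^+$ and not just $\Ccb$; this is built into the construction of \prettyref{prop:densedec}, where the perturbation appending the indecomposable bump $j$ is nonnegative, so nonnegativity is preserved.
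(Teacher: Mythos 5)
Your proposal is correct and follows essentially the same route as the paper, whose proof of this corollary simply refers back to \prettyref{cor:densediv}: truncate $\map p_X$ to a compactly supported approximant, renormalize, and invoke the density of indecomposable functions from \prettyref{prop:densedec}. Your explicit observation that indecomposability is preserved under positive rescaling tidies up a step the paper leaves implicit, but it is not a different argument.
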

\begin{proof}
	See \prettyref{cor:densediv}.
\end{proof}

\section{Conclusion}
In \prettyref{sec:cptpstochastic}, we have shown that the question of existence of a stochastic root for a given stochastic matrix is in general at least as hard as answering \sat, i.e.\ it is \np-hard. By \prettyref{cor:doubly}, this \np-hardness result also extends to \textsc{Nonnegative} and \textsc{Doubly Stochastic Divisibility}, which proves \prettyref{th:stochhard}.
A similar reduction goes through for \textsc{\cptp Divisibility} in \prettyref{cor:cptpequiv1}, proving \np-hardness of the question of existence of a \cptp root for a given \cptp map.

In \prettyref{sec:distributions}, we have shown that---in contrast to \cptp and stochastic matrix divisibility---distribution divisibility is in \pp, proving \prettyref{th:div}. On the other hand, if we relax divisibility to the more general decomposability problem, it becomes \np-hard as shown in \prettyref{th:dec}. We have also extended these results to weak membership formulations in \prettyref{th:divweak} and \ref{th:decweak}---i.e.\ where we only require a solution to within $\epsilon$ in the appropriate metric---showing that all the complexity results are robust to perturbation.

Finally, in \prettyref{sec:contdiv} and \ref{sec:contdec}, we point out that for continuous distributions---where the only computationally the only meaningful formulations are the weak membership problems or closely related variants---questions of divisibility and decomposability become computationally trivial, as the nondivisible and indecomposable distributions independently form dense sets.

As containment in \np\ for all of the \np-hard problems is easy to show (\prettyref{lem:npcontainment} and \ref{lem:npcontainment2}), these problems are also \np-complete. Thus our results imply that, apart for the distribution divisibility problem which is efficiently solvable, all other divisibility problems for maps and distributions are equivalent to the famous $\pp=\np$ conjecture, in the following precise sense: A polynomial-time algorithm for answering any one of these questions---\textsc{(Doubly) Stochastic}, \textsc{Nonnegative} or \textsc{\cptp Divisibility}, or either of the \textsc{Decomposability} variants---would prove  $\pp=\np$.  Conversely, solving  $\pp=\np$ would imply that there exists a polynomial-time algorithm to solve all of these \textsc{Divisibility} problems.

\section{Acknowledgements}
Johannes Bausch would like to thank the German National Academic Foundation and EPSRC for financial support. Toby Cubitt is supported by the Royal Society. The authors are grateful to the Isaac Newton Institute for Mathematical Sciences, where part of this work was carried out, for their hospitality during the 2013 programme ``Mathematical Challenges in Quantum Information Theory''.

\section{Appendix}
\subsection{\np-Toolbox}\label{sec:nptools}
\paragraph{Boolean Satisfiability Problems}
\begin{definition}[\sat]\label{def:sat}\leavevmode\\
\textbf{Instance:} $n_v$ boolean variables $m_1,\ldots,m_{n_v}$ and $n_c$ clauses $\map R(m_{i1},m_{i2},m_{i3})$ where $i=1,\ldots,n_c$, usually denoted as a $4$-tuple $(n_v,n_c,m_i,m_{ij})$. The boolean operator $\map R$ satisfies
\[
\map R(a,b,c)=\begin{cases}
\textsc{True}&\text{if exactly one of $a,b$ or $c$ is \textsc{True}}\\
\textsc{False}&\text{otherwise}\point
\end{cases}
\]
\textbf{Question:} Does there exist a truth assignment to the boolean variables such that every clause contains exactly one true variable?
\end{definition}

\paragraph{Subset Sum Problems}
We start out with the following variant of a well-known \np-complete problem---see for example \cite{Garey1979} for a reference.

%\begin{definition}[\textsc{Partition Problem, Optimization Version (OptPart)}]\leavevmode
%\begin{description}
%\item[Instance.] Multiset $\set S$ of positive integers or reals.
%\item[Output.] Multiset $\set T\subset \set S$ such that
%$|\sum_{t\in\set T}t-\sum_{s\in\set S\setminus\set T}s|$ is minimal.
%\end{description}
%\end{definition}

\begin{definition}[\textsc{Subset Sum, Variant}]\leavevmode
\begin{description}
\item[Instance.] Multiset $\set S$ of integer or rational numbers, $l\in\sset R$.
\item[Question.] Does there exist a multiset $\set T\subsetneq \set S$ such that
$|\sum_{t\in\set T}t-\sum_{s\in\set S\setminus\set T}s|<l$?
\end{description}
\end{definition}

%\begin{remark}
%\textsc{Subset Sum}$\longleftarrow$\textsc{OptPart}.
%\end{remark}
%\begin{proof}
%Use binary search.
%proof explicitly, see
% http://cs.stackexchange.com/questions/18782/is-the-set-partitioning-problem-np-complete
%\end{proof}
From the definition, we immediately observe the following rescaling property.
\begin{corollary}\label{cor:optpartlin}
Let $a\in\sset R\setminus\{0\}$ and $(\set S,l)$ a \textsc{Subset Sum} instance. Then \textsc{Subset Sum}$(\set S,l)=$ \textsc{Subset Sum}$(a\set S,|a|l)$.
\end{corollary}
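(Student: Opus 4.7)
The plan is to exhibit the obvious bijection $\set T \mapsto a\set T$ between nontrivial sub-multisets of $\set S$ and nontrivial sub-multisets of $a\set S$, and then verify that the defining inequality transforms correctly under this map. First I would observe that multiplication by a nonzero scalar $a$ is injective on $\sset R$, so $\set T \mapsto a\set T := \{a t : t \in \set T\}$ (with multiplicities preserved) is a bijection from sub-multisets of $\set S$ to sub-multisets of $a\set S$ which maps $\set S \setminus \set T$ to $a\set S \setminus a\set T$ and preserves the proper-subset relation.

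Next I would compute
\[
\sum_{t' \in a\set T} t' - \sum_{s' \in a\set S \setminus a\set T} s' = a\left(\sum_{t \in \set T} t - \sum_{s \in \set S \setminus \set T} s\right),
\]
using linearity of the sum (with multiplicities). Taking absolute values and using $|ax| = |a||x|$, the inequality defining \textsc{Subset Sum}$(a\set S,|a|l)$ becomes
\[
|a| \cdot \left|\sum_{t \in \set T} t - \sum_{s \in \set S \setminus \set T} s\right| < |a|\, l,
\]
and since $|a| > 0$ this is equivalent to the inequality defining \textsc{Subset Sum}$(\set S,l)$. Combining this with the bijection shows the two decision problems have the same witness set and therefore the same answer.

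There is no real obstacle here; the only point requiring a moment of care is the hypothesis $a \neq 0$, which is needed both to ensure the map $\set T \mapsto a\set T$ is a bijection (otherwise $a\set S$ collapses to a multiset of zeros) and to divide through by $|a|$ in the final equivalence. The proof is essentially a two-line unwinding of definitions, so I would keep the write-up short.
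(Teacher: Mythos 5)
Your proof is correct and is exactly the definitional unwinding the paper has in mind: the paper states this corollary as an immediate observation from the definition (scaling all elements by $a$ scales the signed difference by $a$, and $|a|>0$ lets one divide through), so no further comparison is needed.
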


For a special version of \textsc{Subset Sum}, \textsc{Subset Sum}$_m$---as defined in \prettyref{def:mss}---we observe the following.
\begin{lemma}\label{lem:ssm}
  \textsc{Subset Sum}$_m\longleftarrow$\textsc{Subset Sum}.
\end{lemma}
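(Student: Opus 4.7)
The plan is a standard padding argument. Given an instance $(\set S,l)$ of \textsc{Subset Sum} with $n:=|\set S|$, I would produce an instance $(\set S',l)$ of \textsc{Subset Sum}$_m$ by appending $n$ additional copies of the value $0$ to $\set S$, so that the resulting multiset $\set S'$ has even cardinality $2n$, and by setting $m:=n-1$. The key observation is that appending zeros to a multiset does not affect its sum, so any partition of $\set S$ extends---by distributing the available padded zeros---to a partition of $\set S'$ of arbitrary prescribed cardinality in $\{0,\ldots,2n\}$; conversely, the restriction to $\set S$ of any subset of $\set S'$ preserves the total of its nonzero part.

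For the forward direction, any strict subset $\set T\subsetneq\set S$ satisfies $|\set T|\leq n-1$, so I would define $\set T'$ by adjoining $n-1-|\set T|$ copies of $0$ to $\set T$; this gives $|\set T'|=m$ and leaves both $\sum_{t\in\set T'}t=\sum_{t\in\set T}t$ and the corresponding complement sum unchanged, so the inequality from the original instance carries over verbatim. For the backward direction, given any $\set T'\subsetneq\set S'$ with $|\set T'|=m=n-1$, I would set $\set T:=\set T'\cap\set S$; automatically $|\set T|\leq n-1<n$, hence $\set T$ is a strict subset of $\set S$, and the sums on both sides are again unchanged by ignoring the zero contributions, so the original sum-difference inequality is recovered.

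The only delicate point---and the reason for preferring $m=n-1$ over the superficially natural $m=n$---is the strict-subset requirement in the backward direction. With $m=n$, the $m$-variant could pick $\set T'=\set S$ (all original elements, none of the padded zeros), which would pull back to $\set T=\set S$ and violate strictness in the original \textsc{Subset Sum}. Taking $m=n-1$ (or indeed any fixed $m<n$ with correspondingly many padded zeros) sidesteps this, since a size-$(n-1)$ subset of $\set S'$ necessarily omits at least one nonzero element. The construction is polynomial-time computable, so the \np-hardness of \textsc{Subset Sum} transfers directly to \textsc{Subset Sum}$_m$. I do not anticipate any real obstacles; the proof is essentially routine once the right padding and cardinality choice are made, and there are no combinatorial estimates to perform.
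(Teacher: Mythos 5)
Your reduction is correct, but it is not the one the paper uses. The paper's proof is a one-line disjunctive (Turing-type) reduction on the \emph{same} instance: $\textsc{Subset Sum}(\set S,l)=\bigvee_{m=1}^{|\set S|}\textsc{Subset Sum}_m(\set S,l)$, i.e.\ it queries the $m$-restricted problem for every cardinality and takes the OR. You instead give a single-call Karp reduction: pad $\set S$ with $n=|\set S|$ zeros and target the one cardinality $m=n-1$. Your route has two small advantages: it is a many-one reduction rather than a disjunction over $n$ oracle calls, and it actually satisfies the ``$|\set S|$ even'' requirement in the definition of \textsc{Subset Sum}$_m$, which the paper's proof silently ignores. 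It is also essentially the same zero-padding device the paper itself uses later for \textsc{Even Subset Sum} (\prettyref{lem:evenoptpart}), so it fits the paper's toolkit; the paper's version buys brevity and makes explicit that hardness holds for \emph{some} $m$ without changing the instance. In both proofs the parameter $m$ necessarily scales with the input ($m=|\set S'|/2-1$ in yours), consistent with the fact that \textsc{Subset Sum}$_m$ is in \pp\ for slowly growing $m$ (corollary to \prettyref{lem:msupp_p}).

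One caveat on an aside in your write-up: the parenthetical claim that ``any fixed $m<n$ with correspondingly many padded zeros'' would also work is false for small $m$. The forward direction breaks there: a witness $\set T$ may contain up to $n-1$ (or, using the symmetry $\set T\leftrightarrow\set S\setminus\set T$, up to roughly $n/2$) nonzero elements, and no amount of zero-padding lets you shrink it to a prescribed smaller cardinality---indeed, for constant $m$ the problem is polynomial-time solvable, so such a reduction cannot exist unless $\pp=\np$. Since your actual construction uses $m=n-1$, this does not affect the proof; just drop or qualify the parenthetical. Also note that, with the paper's definition allowing $\set T=\emptyset$, even the ``superficially natural'' $m=n$ choice would not in fact produce a wrong answer (an all-original witness pulls back to the empty-set witness), but your $m=n-1$ choice avoids leaning on that subtlety and is cleaner.
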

\begin{proof}
  If $(\set S,l)$ is a \textsc{Subset Sum} instance, then
  \[
    \textsc{Subset Sum}(\set S,l)=\bigvee\limits_{m=1}^{|\set S|}\textsc{Subset Sum}_m(\set S,l)\point\qedhere
  \]
\end{proof}
\begin{remark}\label{rem:ss_m}
It is clear that \textsc{Subset Sum}$_m(\set S,l)=$ \textsc{False} for $|\set S|\le m\lor0\ge m$. Furthermore, \textsc{Subset Sum}$_m(\set S,l)=$ \textsc{Subset Sum}$_{|\set S|-m}(\set S,l)$.
\end{remark}
Observe that this remark indeed makes sense, as \textsc{Subset Sum}$_0$ should give \textsc{False}, which is the desired outcome for $m=|\set S|$.
We further reduce \textsc{Subset Sum} to \textsc{Even Subset Sum}, as defined in \prettyref{def:evenss}.
% this is known to be NP hard, see Garey/Johnson under SUBSET SUM
\begin{lemma}\label{lem:evenoptpart}
\textsc{Even Subset Sum}$\longleftarrow$\textsc{Subset Sum}.
\end{lemma}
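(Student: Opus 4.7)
My plan is to reduce \textsc{Subset Sum} to \textsc{Even Subset Sum} by a zero-padding construction. Given a \textsc{Subset Sum} instance $(\set S,l)$ with $n:=|\set S|$, I would define the multiset
\[
\set S' := \set S \sqcup \underbrace{\{0,\ldots,0\}}_{n\ \text{times}},
\]
so that $|\set S'|=2n$ is even. The parameter $l$ is left unchanged. The entire construction is clearly polynomial-time and preserves rationality, so the reduction is of Karp type.

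The forward direction is straightforward: given a subset sum witness $\set T \subsetneq \set S$ with $|\set T|=k$, the properness forces $k\le n-1$, so there are $n-k\ge 1$ padding zeros to spare. I would define
\[
\set T' := \set T \sqcup \underbrace{\{0,\ldots,0\}}_{n-k\ \text{times}} \subsetneq \set S',
\]
which has cardinality exactly $n=|\set S'|/2$. Since the added elements are zero, both partial sums are unchanged, so the \textsc{Even Subset Sum} inequality holds with the same $l$.

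For the reverse direction, I would take an \textsc{Even Subset Sum} witness $\set T' \subsetneq \set S'$ with $|\set T'|=n$ and restrict to the original elements, $\set T := \set T' \cap \set S$. Again the sums are preserved and the inequality carries over. The only genuine obstacle is a small edge case: it is not automatic that $\set T \subsetneq \set S$, because $\set T'$ might happen to contain all of $\set S$ (and then, forced by cardinality, none of the padding zeros), giving $\set T=\set S$. In this case, however, the witnessed inequality reads $|\sum_{s\in\set S}s|<l$, and then $\set T^*:=\emptyset\subsetneq\set S$ is a perfectly valid \textsc{Subset Sum} witness yielding the same absolute difference. Hence $(\set S,l)$ is a \yes-instance either way.

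Combining both directions gives $(\set S,l)\in\textsc{Subset Sum}\Longleftrightarrow(\set S',l)\in\textsc{Even Subset Sum}$, which is the required reduction. \np-hardness of \textsc{Even Subset Sum} then follows from that of \textsc{Subset Sum}.
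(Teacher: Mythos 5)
Your proposal is correct and matches the paper's own reduction essentially verbatim: pad $\set S$ with $|\set S|$ zeros to get an even-cardinality instance with the same bound $l$, and translate witnesses by adding or stripping padding zeros. The only difference is that you also patch the properness edge case ($\set T'=\set S$ forcing $\set T=\set S$, remedied by taking $\emptyset$ as witness), a detail the paper's proof leaves implicit.
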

\begin{proof}
Let $(\set S, l)$ be an \textsc{Subset Sum} instance. Define $\set S':=\set S\cup\{0,\ldots,0\}:|\set S'|=2|\set S|$. Then if \textsc{Even Subset Sum}$(\set S',l)=$ \textsc{True}, we know that there exists $\set T'\subset\set S':|\sum_{t\in\set T'}t-\sum_{s\in\set S'\setminus\set T'}s|<l$. Let then $\set T:=\set T'$ without the $0$s. It is obvious that then $|\sum_{t\in\set T}t-\sum_{s\in\set S\setminus\set T}s|<l$. The \textsc{False} case reduces analogously, hence the claim follows.
\end{proof}

For \textsc{Even Subset Sum}, we generalize \prettyref{cor:optpartlin} to the following scaling property.
\begin{lemma}\label{lem:rescale}
Let $a\in\sset R\setminus\{0\}$, $c\in\sset R$, and $(\set S,l)$ an \textsc{Even Subset Sum} instance. Then
\textsc{Even Subset Sum}$(\set S,l)=$ \textsc{Even Subset Sum}$(a\set S+c,|a|l)$, where addition and multiplication is defined element-wise.
\end{lemma}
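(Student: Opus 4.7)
The plan is to exhibit the natural bijection between subsets of $\set S$ and subsets of $a\set S + c$ given by the affine map $s \mapsto as + c$, applied element-wise, and to show that this bijection preserves witnesses of \textsc{Even Subset Sum} after scaling $l$ by $|a|$. Write $n := |\set S|$ (which is even by assumption) and let $\varphi:\set S \longrightarrow a\set S + c$ be the map $s \mapsto as + c$. For any $\set T \subseteq \set S$ of size $n/2$, the image $\varphi(\set T)$ is a subset of $a\set S + c$ of the same size $n/2$, and $\varphi$ restricts to a bijection on half-sized subsets of both multisets.

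The core computation is that, for any such $\set T$,
\begin{align*}
\sum_{t \in \varphi(\set T)} t - \sum_{s \in (a\set S + c)\setminus \varphi(\set T)} s
&= \sum_{t \in \set T}(at+c) - \sum_{s \in \set S \setminus \set T}(as+c) \\
&= a\left(\sum_{t \in \set T} t - \sum_{s \in \set S \setminus \set T} s\right) + c\bigl(|\set T| - |\set S \setminus \set T|\bigr).
\end{align*}
The crucial point, which is the only nontrivial observation in the argument, is that the additive-constant contribution $c(|\set T| - |\set S \setminus \set T|) = c(n/2 - n/2) = 0$ vanishes identically because of the equal-cardinality constraint built into \textsc{Even Subset Sum}. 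Had the constraint $|\set T| = |\set S|/2$ been absent, adding $c$ to every element would in general change the difference, and the statement would fail; this is precisely why the lemma is formulated for \textsc{Even Subset Sum} rather than plain \textsc{Subset Sum}.

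Having eliminated the $c$-term, the transformed difference equals $a\,D$, where $D := \sum_{t\in\set T} t - \sum_{s\in\set S\setminus\set T} s$ is the original signed difference for $\set T$. Therefore $|aD| < |a|l$ if and only if $|D| < l$, since $|a| > 0$. Thus $\set T$ is a witness for $(\set S,l)$ iff $\varphi(\set T)$ is a witness for $(a\set S+c,|a|l)$, and running the bijection backwards handles the converse direction. In particular, the \yes/\no answers of the two instances coincide, which is the claim. No further machinery is required; the lemma is essentially a direct unfolding of the definition once the equal-cardinality cancellation is spotted.
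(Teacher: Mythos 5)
Your proposal is correct and is exactly the argument the paper has in mind: its one-line proof ("straightforward, since we require $|\set S|=2|\set T|=2|\set S\setminus\set T|$") is precisely your equal-cardinality cancellation of the $c$-term, combined with the trivial rescaling by $|a|$. You have simply written out the details the paper leaves implicit.
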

\begin{proof}
Straightforward, since we require $|\set S|=2|\set T|=2|\set S\setminus\set T|$.
\end{proof}

For \prettyref{def:sssm}, we finally show
\begin{lemma}\label{lem:sssm}
\textsc{Signed Subset Sum}$_m\longleftarrow$\textsc{Subset Sum}$_m$.
\end{lemma}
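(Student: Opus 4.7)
The plan is to exhibit a one-shot reduction that, given a \textsc{Subset Sum}$_m$ instance $(\set S,l)$, produces a single \textsc{Signed Subset Sum}$_m$ instance $(\set S',x,y)$ with the same answer. Two mismatches must be bridged: \textsc{Subset Sum}$_m$ asks about an absolute-value constraint $|\sum_{t\in\set T}t-\sum_{s\in\set S\setminus\set T}s|<l$, whereas \textsc{Signed Subset Sum}$_m$ asks about a two-sided interval constraint $x<\sum_{t\in\set T}t-\sum_{s\in\set S\setminus\set T}s<y$; and \textsc{Signed Subset Sum}$_m$ insists that the multiset consist of positive entries, while $\set S$ may contain negative or zero elements.

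The first mismatch is cosmetic: $|u|<l$ is equivalent to $-l<u<l$, matching the two-sided form with $x=-l$, $y=l$. The second is handled by a uniform shift. Choose any $c\in\sset R$ with $s+c>0$ for all $s\in\set S$ (for instance $c:=1-\min_{s\in\set S}s$) and set $\set S':=\{s+c:s\in\set S\}$. Under the bijection $\set T\mapsto\set T':=\{t+c:t\in\set T\}$ between $m$-subsets of $\set S$ and of $\set S'$, the signed sum transforms as
\[
\sum_{t'\in\set T'}t'-\sum_{s'\in\set S'\setminus\set T'}s'=\left(\sum_{t\in\set T}t-\sum_{s\in\set S\setminus\set T}s\right)+(2m-|\set S|)c\point
\]
Writing $D:=(2m-|\set S|)c$, the original inequality $|\sum t-\sum s|<l$ is therefore equivalent to $D-l<\sum t'-\sum s'<D+l$, so the query \textsc{Signed Subset Sum}$_m(\set S',D-l,D+l)$ decides \textsc{Subset Sum}$_m(\set S,l)$.

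The only edge case to dispose of is $l\le0$, for which \textsc{Subset Sum}$_m(\set S,l)$ is trivially \no and requires no oracle call (and conveniently avoids producing an instance with $x>y$). Bit complexity of $c$ is clearly polynomial in the input, so the entire construction is a polynomial-time (indeed Karp) reduction, and the \np-hardness of \textsc{Subset Sum}$_m$ established in \prettyref{lem:ssm} transfers to \textsc{Signed Subset Sum}$_m$. There is no real obstacle here; the lemma is essentially bookkeeping to complete the reduction chain \textsc{Subset Sum}$\longrightarrow$\textsc{Subset Sum}$_m\longrightarrow$\textsc{Signed Subset Sum}$_m$ used in \prettyref{lem:mdiv}.
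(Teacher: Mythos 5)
Your proof is correct and takes essentially the same route as the paper, whose entire argument is the identity \textsc{Signed Subset Sum}$_m(\set S,-l,l)=$ \textsc{Subset Sum}$_m(\set S,l)$, i.e.\ the same observation that $|u|<l$ is the two-sided constraint $-l<u<l$. Your extra uniform shift by $c$ with the $(2m-|\set S|)c$ offset (and the $l\le0$ edge case) only patches the requirement in \prettyref{def:sssm} that the multiset be positive, a detail the paper's one-line proof glosses over and which it handles by the same shifting trick later in \prettyref{lem:mdiv}; this is a harmless refinement rather than a different approach.
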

\begin{proof}
Immediate from \textsc{Signed Subset Sum}$_m(\set S,-l,l)=$ \textsc{Subset Sum}$_m(\set S,l)$.
\end{proof}

\paragraph{Partition Problems}
Another well-known \np-complete problem which will come into play in the proof of \prettyref{th:dec2} is \knownth{set partitioning}.
\begin{definition}[\textsc{Partition}]\leavevmode
\begin{description}
\item[Instance.] Multiset $\set A$ of positive integers or reals.
\item[Question.] Does there exist a multiset $\set T\subsetneq \set A$ with $\sum_{t\in\set T}t=\sum_{s\in\set A\setminus T}s$?
\end{description}
\end{definition}

\begin{lemma}\label{lem:optpartsum}
For the special case of \textsc{Subset Sum} with instance $(\set S,l)$, where the bound $l\in\sset R$ equals the total sum of the instance numbers $l=\sum_{s\in\set S}s$, we obtain the equivalence \textsc{Subset Sum}$(\Cdot,\Sigma_{\CDot})\longleftrightarrow$\textsc{Partition}$(\Cdot)$.
\end{lemma}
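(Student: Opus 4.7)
\emph{Proof plan.} The symbol $\longleftrightarrow$ in the statement is to be read as mutual polynomial-time (Karp) reducibility, matching its usage in \prettyref{lem:ssm} and \prettyref{lem:sssm}. Since \textsc{Partition} is already \np-complete and $\textsc{Subset Sum}(\Cdot,\Sigma_{\CDot})$ lies in \np---a proper subset $T$ is a polynomial-size witness and the condition $|\sigma_T-\sigma_{\set S\setminus T}|<\Sigma_\set S$ is verifiable in polynomial time---the direction $\textsc{Subset Sum}(\Cdot,\Sigma_{\CDot})\longleftarrow\textsc{Partition}$ is automatic. The substantive direction, and the one needed in \prettyref{th:dec2}, is the reverse reduction $\textsc{Partition}\longrightarrow\textsc{Subset Sum}(\Cdot,\Sigma_{\CDot})$.

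Substituting $l=\Sigma_\set S$ into the Subset Sum inequality $|\sigma_T-\sigma_{\set S\setminus T}|<l$ collapses it to $0<\sigma_T<\Sigma_\set S$. If $\set S\subset\sset R_{>0}$ then any non-empty proper subset is admissible and the problem is trivially \yes, so the reduction cannot be the identity and is forced to introduce negative entries which narrow the admissible window for $\sigma_T$ down to a single integer target; once that window is arranged so that the unique admissible value corresponds to splitting $A$ in half, a Partition-type matching condition emerges.

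Concretely, given $A=\{a_1,\ldots,a_n\}\subset\sset N_{>0}$ with $\Sigma:=\sum_i a_i$, the plan is to set
\[
\set S:=\{4a_1,\ldots,4a_n\}\cup\{\,-2\Sigma+1,\,-2\Sigma+1\,\},
\]
so that $\Sigma_\set S=4\Sigma-2(2\Sigma-1)=2$ and the condition becomes $T\subsetneq\set S$ with $\sigma_T\in(0,2)\cap\sset Z=\{1\}$. Splitting $T$ by the number $k\in\{0,1,2\}$ of copies of $-2\Sigma+1$ it contains, and writing $T'\subseteq A$ for the subset underlying $T\cap\{4a_1,\ldots,4a_n\}$, one checks that at $k=0$ one has $\sigma_T=4\sigma_{T'}\equiv 0\pmod 4$ and so $\sigma_T\neq 1$; at $k=1$ one has $\sigma_T=4\sigma_{T'}-2\Sigma+1$, and $\sigma_T=1$ iff $\sigma_{T'}=\Sigma/2$; at $k=2$ one has $\sigma_T=4\sigma_{T'}-4\Sigma+2$, and $\sigma_T=1$ would force $4\sigma_{T'}=4\Sigma-1$, which is impossible modulo $4$. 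Hence $\textsc{Subset Sum}(\set S,\Sigma_\set S)=\yes$ iff some $T'\subseteq A$ has $\sigma_{T'}=\Sigma/2$, i.e.\ iff $\textsc{Partition}(A)=\yes$; the map $A\mapsto\set S$ is plainly polynomial-time and uses only integer data.

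\emph{Main obstacle.} The lemma is non-trivial precisely because of the triviality pointed out above: under $l=\Sigma_\set S$, purely positive instances degenerate, so the reduction must exploit the sign freedom of \textsc{Subset Sum} to embed the Partition target inside a tight integer window. The delicate step is verifying that the three modular branches of the case analysis are genuinely disjoint and that no degenerate choice of $T'$---including $T'=\varnothing$, $T'=A$, or configurations mixing both negative copies with nearly-full subsets of $A$---accidentally yields $\sigma_T=1$. This is exactly what the scaling factor $4$ and the $+1$ shift in $-2\Sigma+1$ enforce, by separating the reachable values of $\sigma_T$ into three disjoint residue classes modulo $4$, of which only the middle one meets $\{1\}$ and only through the Partition condition.
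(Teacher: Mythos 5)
Your reduction is correct, and at its core it is the same trick the paper uses: pad the \textsc{Partition} instance with two identical negative elements worth (roughly) half the total, so that the otherwise trivial window $0<\sigma_\set T<\Sigma_\set S$ can only be hit by a perfect split. The differences are in the details, and they are worth noting. The paper appends two copies of $-\Sigma_\set S/2$, driving the total sum to $0$, and then argues that the two negative copies must land on opposite sides, leaving an exact balance of the original set; note that its proof silently reads the condition as $|\sigma_\set T-\sigma_{\set S\setminus\set T}|=0$, whereas the \textsc{Subset Sum} variant is defined with the strict bound $<l$, under which $l=\Sigma_{\set S'}=0$ would be vacuously unsatisfiable---so the paper's gadget really needs a non-strict reading of the bound. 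Your scale-by-$4$, shift-by-$+1$ gadget (total sum $2$, integer window forcing $\sigma_\set T=1$, mod-$4$ case analysis replacing the ``both copies on one side'' argument) respects the strict inequality exactly as defined, which is a genuine improvement in rigour. The price is that your argument needs integer data, so it reduces from integer \textsc{Partition} only; that is harmless, since integer \textsc{Partition} is \np-complete and \prettyref{th:dec2} only needs \np-hardness of \textsc{Subset Sum}$(\Cdot,\Sigma_{\CDot})$, while the paper's version is scale-free and also covers real-valued instances. Finally, you discharge the converse direction of the stated ``equivalence'' by the generic argument (containment in \np\ plus \np-completeness of \textsc{Partition}) rather than an explicit map; the paper does not give an explicit converse reduction either, and for the use made of the lemma this is sufficient.
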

\begin{proof}
Let $\set S$ be the multiset of a \textsc{Subset Sum} instance $(\set S,l)$, where we assume without loss of generality that all $\set S\ni s\ge0$.
Now first assume $\Sigma_\set S=0$. In that case the claim follows immediately, since the problems are identical.

Without loss of generality, we can thus assume $\Sigma_\set S>0$ and regard the set $\set S':=\set S\cup\{-\Sigma_\set S/2,-\Sigma_\set S/2\}$, such that $\Sigma_{\set S'}=0$.

If now \textsc{Subset Sum}$(\set S',0)=$ \textsc{True}, we know that there exists $\set T\subsetneq\set S':|\sum_{t\in\set T}t-\sum_{s\in\set S'\setminus T}s|=0$. Now assume $\set T$ contains both copies of $-\Sigma_\set S/2$. Then clearly
\[ |\sum_{t\in\set T}t\ -\cl\sum_{s\in\set S'\setminus T}\cl s|=|-\Sigma_\set S+\cll\sum_{t\in\set T\setminus\{-\Sigma_\set S\}}\cll t\ -\sum_{s\in\set S\setminus T}\! s|>0\comma\]
since $|\set S\setminus\set T|>0$.
The same argument shows that exactly one $-\Sigma_\set S/2\in\set T, \set S\setminus\set T$, and hence \textsc{Partition}$(\set S)=$ \textsc{True}.

On the other hand, if \textsc{Partition}$(\set S)=$ \textsc{True}, then it immediately follows that  \textsc{Subset Sum}$(\set S,\Sigma_\set S)=$ \textsc{True}.
\end{proof}

Finally observe the following extension of \prettyref{lem:optpartsum}.
\begin{lemma}\label{lem:optpartsumex}
 Let $\epsilon>0$, $f$ a polynomial. Then \textsc{Subset Sum}$(\Cdot,\Sigma_{\CDot}+f(\epsilon))\longleftrightarrow$ \textsc{Partition}$(\Cdot)$.
\end{lemma}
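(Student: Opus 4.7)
The plan is to mirror the reduction used in Lemma \ref{lem:optpartsum}, tracking the polynomial slack $f(\epsilon)$ through the same argument. Given a \textsc{Partition} instance $\set S$, which we may assume without loss of generality consists of non-negative integers with $\Sigma_\set S > 0$, I would construct the augmented multiset $\set S' := \set S \cup \{-\Sigma_\set S/2, -\Sigma_\set S/2\}$, so that $\Sigma_{\set S'} = 0$, and establish the equivalence that \textsc{Partition}$(\set S)$ answers \yes if and only if \textsc{Subset Sum}$(\set S', f(\epsilon))$ does. Since $\set S'$ is a valid input to \textsc{Subset Sum}$(\Cdot, \Sigma_{\Cdot} + f(\epsilon))$ (with $\Sigma_{\set S'} + f(\epsilon) = f(\epsilon)$), this provides the required Karp reduction from \textsc{Partition}; the reverse direction of the lemma follows from \np-completeness of \textsc{Partition} together with the obvious \np-containment of \textsc{Subset Sum}$(\Cdot, \Sigma_{\Cdot} + f(\epsilon))$.

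The forward direction of the equivalence is routine: a \textsc{Partition} witness $\set U \subsetneq \set S$ with $\sum_\set U = \sum_{\set S \setminus \set U}$ lifts to $\set T := \set U \cup \{-\Sigma_\set S/2\} \subsetneq \set S'$ whose two halves sum to zero, so the difference is $0 < f(\epsilon)$.

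For the reverse direction, I would follow the case analysis of Lemma \ref{lem:optpartsum} on whether a \textsc{Subset Sum}$(\set S', f(\epsilon))$ witness $\set T \subsetneq \set S'$ contains both, neither, or exactly one copy of $-\Sigma_\set S/2$. In the first two cases the difference $|\sum_{t \in \set T} t - \sum_{s \in \set S' \setminus \set T} s|$ is bounded below by $\Sigma_\set S$, which we arrange to exceed $f(\epsilon)$ by the scaling argument below. Hence $\set T$ must contain exactly one copy of $-\Sigma_\set S/2$, and setting $\set U := \set T \setminus \{-\Sigma_\set S/2\}$ yields $|\sum_{u \in \set U} u - \sum_{s \in \set S \setminus \set U} s| < f(\epsilon)$.

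The main obstacle is upgrading this approximate equality into an exact partition---this is where the polynomial slack matters. Using that \textsc{Partition} remains \np-hard over the integers, I would prescale $\set S$ by a constant $K = K(|\set S|, \epsilon, f)$ large enough that the minimum nonzero value of $|\sum_\set U - \sum_{\set S \setminus \set U}|$ on the rescaled set is at least $K > f(\epsilon)$. The integrality of the rescaled differences combined with the strict bound $< f(\epsilon) < K$ then forces the difference to vanish exactly, producing a genuine \textsc{Partition} witness on the rescaled---and hence on the original---instance.
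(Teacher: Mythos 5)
Your proposal is correct, but it resolves the $f(\epsilon)$ slack by a different mechanism than the paper. The paper's proof is a one-line modification of \prettyref{lem:optpartsum}: it pads the \textsc{Partition} instance with two copies of $-\Sigma_\set S/2-f(\epsilon)/2$ rather than $-\Sigma_\set S/2$, so that the total sum drops to $-f(\epsilon)$ and the allowed bound $\Sigma_{\Cdot}+f(\epsilon)$ collapses back to $0$; the slack is thus absorbed into the padding elements and the exact-partition case analysis of \prettyref{lem:optpartsum} is reused verbatim, with no scaling and no integrality assumption on the instance. You instead keep the padding $-\Sigma_\set S/2$, accept a genuinely positive bound $f(\epsilon)$, and then force the approximate partition to be exact by prescaling an integer \textsc{Partition} instance by $K>f(\epsilon)$ so that all relevant subset differences are multiples of $K$. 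Both routes are sound. Yours requires integrality (harmless, since \textsc{Partition} is \np-hard over the integers) and a choice of $K$ depending on $\epsilon$ and $f$ (also harmless, as these are fixed parameters and $K$ has polynomial bit-size), but it buys compatibility with the strict inequality in the \textsc{Subset Sum} definition: in the paper's construction the effective bound is exactly $0$, inheriting the same strict-vs-exact looseness already present in \prettyref{lem:optpartsum}, whereas your \yes witnesses satisfy $0<f(\epsilon)$ strictly. Two small points: your claim that the ``both/neither copy'' cases give a difference bounded below by $\Sigma_\set S$ is not literally true (the difference there equals twice the sum of the leftover elements, which can be much smaller than $\Sigma_\set S$), but after your rescaling it is a nonzero multiple of $2K$, hence still exceeds $f(\epsilon)$, so the argument stands; and, like the paper, you implicitly use the convention that the witness $\set T$ is nonempty as well as proper, since otherwise $\set T=\emptyset$ would trivially satisfy the bound for your instance with $\Sigma_{\set S'}=0$.
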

\begin{proof}
The proof is the same as for \prettyref{lem:optpartsum}, but we regard $\set S\cup\{-\Sigma_\set S/2-f(\epsilon)/2,-\Sigma_\set S/2-f(\epsilon)/2\}$ instead.
\end{proof}

\section{Literature}

\bibliographystyle{elsarticle-harv}
\def\url#1{}
\def\urlprefix{\vspace{-20pt}}
\bibliography{books,distributions,divisibility}

\begin{thebibliography}{33}
\expandafter\ifx\csname natexlab\endcsname\relax\def\natexlab#1{#1}\fi
\expandafter\ifx\csname url\endcsname\relax
  \def\url#1{\texttt{#1}}\fi
\expandafter\ifx\csname urlprefix\endcsname\relax\def\urlprefix{URL }\fi

\bibitem[{Bareiss(1968)}]{Bareiss1968}
Bareiss, E.~H., sep 1968. {Sylvester’s identity and multistep
  integer-preserving Gaussian elimination}. Mathematics of Computation
  22~(103), 565--578.
\newline\urlprefix\url{http://www.ams.org/jourcgi/jour-getitem?pii=S0025-5718-1968-0226829-0}

\bibitem[{Bengtsson and Życzkowski(2006)}]{Bengtsson2006}
Bengtsson, I., Życzkowski, K., 2006. {Geometry of quantum states: an
  introduction to quantum entanglement}. Cambridge University Press.
\newline\urlprefix\url{http://link.springer.com/article/10.1007/BF01654032
  http://books.google.com/books?hl=en{\&}lr={\&}id=aA4vXMbuOTUC{\&}oi=fnd{\&}pg=PA418{\&}dq=Geometry+of+Quantum+States:+An+Introduction+to+Quantum+Entanglement{\&}ots=IbBDT6C-qM{\&}sig=8oCkkwLMEeQeMz5--RIR4l8LH1w}

\bibitem[{Charitos et~al.(2008)Charitos, de~Waal, and van~der
  Gaag}]{Charitos2008}
Charitos, T., de~Waal, P.~R., van~der Gaag, L.~C., mar 2008. {Computing
  short-interval transition matrices of a discrete-time Markov chain from
  partially observed data}. Statistics in Medicine 27~(6), 905--921.
\newline\urlprefix\url{http://www.ncbi.nlm.nih.gov/pubmed/17579926
  http://doi.wiley.com/10.1002/sim.2970}

\bibitem[{Choi(1975)}]{Choi1975}
Choi, M.-D., 1975. {Completely positive linear maps on complex matrices}.
  Linear Algebra and its Applications 10~(3), 285--290.
\newline\urlprefix\url{http://www.sciencedirect.com/science/article/pii/0024379575900750}

\bibitem[{Cochran(1934)}]{Cochran1934}
Cochran, W.~G., oct 1934. {The distribution of quadratic forms in a normal
  system, with applications to the analysis of covariance}. Mathematical
  Proceedings of the Cambridge Philosophical Society 30~(02), 178--191.
\newline\urlprefix\url{http://journals.cambridge.org/abstract{\_}S0305004100016595
  http://www.journals.cambridge.org/abstract{\_}S0305004100016595}

\bibitem[{Cram{\'{e}}r(1936)}]{Cramr1936}
Cram{\'{e}}r, H., dec 1936. {{\"{U}}ber eine Eigenschaft der normalen
  Verteilungsfunktion}. Mathematische Zeitschrift 41~(1), 405--414.
\newline\urlprefix\url{http://link.springer.com/10.1007/BF01180430}

\bibitem[{Cubitt et~al.(2012{\natexlab{a}})Cubitt, Eisert, and
  Wolf}]{Cubitt2012}
Cubitt, T.~S., Eisert, J., Wolf, M.~M., mar 2012{\natexlab{a}}. {Extracting
  Dynamical Equations from Experimental Data is NP Hard}. Physical Review
  Letters 108~(12), 120503.
\newline\urlprefix\url{http://link.aps.org/doi/10.1103/PhysRevLett.108.120503}

\bibitem[{Cubitt et~al.(2012{\natexlab{b}})Cubitt, Eisert, and
  Wolf}]{Cubitt2012a}
Cubitt, T.~S., Eisert, J., Wolf, M.~M., jan 2012{\natexlab{b}}. {The Complexity
  of Relating Quantum Channels to Master Equations}. Communications in
  Mathematical Physics 310~(2), 383--418.
\newline\urlprefix\url{http://link.springer.com/10.1007/s00220-011-1402-y}

\bibitem[{Egleston et~al.(2004)Egleston, Lenker, and Narayan}]{Egleston2004}
Egleston, P.~D., Lenker, T.~D., Narayan, S.~K., mar 2004. {The nonnegative
  inverse eigenvalue problem}. Linear Algebra and its Applications 379,
  475--490.
\newline\urlprefix\url{http://linkinghub.elsevier.com/retrieve/pii/S0024379503007985}

\bibitem[{Elfving(1937)}]{Elfving1937}
Elfving, G., 1937. {Zur Theorie der Markoffschen Ketten}. Acta Soc. Sci.
  Fennicae n. Ser. A 2~(8), 1--17.

\bibitem[{Garey and Johnson(1979)}]{Garey1979}
Garey, M.~R., Johnson, D.~S., jan 1979. {Computers and Intractability: A Guide
  to the Theory of NP-Completeness}. W. H. Freeman {\&} Co.
\newline\urlprefix\url{http://dl.acm.org/citation.cfm?id=578533}

\bibitem[{Gorini(1976)}]{Gorini1976}
Gorini, V., aug 1976. {Completely positive dynamical semigroups of N-level
  systems}. Journal of Mathematical Physics 17~(5), 821.
\newline\urlprefix\url{http://scitation.aip.org/content/aip/journal/jmp/17/5/10.1063/1.522979}

\bibitem[{Hart et~al.(2011)Hart, van Hoeij, and Novocin}]{Hart2011}
Hart, W., van Hoeij, M., Novocin, A., jun 2011. {Practical polynomial factoring
  in polynomial time}. In: Proceedings of the 36th international symposium on
  Symbolic and algebraic computation - ISSAC '11. ACM Press, New York, New
  York, USA, p. 163.
\newline\urlprefix\url{http://dl.acm.org/citation.cfm?id=1993886.1993914}

\bibitem[{He and Gunn(2003)}]{He2003}
He, Q.-M., Gunn, E., jun 2003. {A note on the stochastic roots of stochastic
  matrices}. Journal of Systems Science and Systems Engineering 12~(2),
  210--223.
\newline\urlprefix\url{http://link.springer.com/10.1007/s11518-006-0131-9}

\bibitem[{Higham(1987)}]{Higham1987}
Higham, N.~J., apr 1987. {Computing real square roots of a real matrix}. Linear
  Algebra and its Applications 88-89~(1987), 405--430.
\newline\urlprefix\url{http://linkinghub.elsevier.com/retrieve/pii/0024379587901182}

\bibitem[{Higham and Lin(2011)}]{Higham2011}
Higham, N.~J., Lin, L., aug 2011. {On pth roots of stochastic matrices}. Linear
  Algebra and its Applications 435~(3), 448--463.
\newline\urlprefix\url{http://linkinghub.elsevier.com/retrieve/pii/S0024379510001849}

\bibitem[{Jarrow(1997)}]{Jarrow1997}
Jarrow, R.~A., apr 1997. {A Markov model for the term structure of credit risk
  spreads}. Review of Financial Studies 10~(2), 481--523.
\newline\urlprefix\url{http://rfs.oupjournals.org/cgi/doi/10.1093/rfs/10.2.481}

\bibitem[{Katti(1977)}]{Katti1977}
Katti, S.~K., 1977. {Infinite divisibility of discrete distributions. III.}

\bibitem[{Kingman(1962)}]{Kingman1962}
Kingman, J. F.~C., 1962. {The imbedding problem for finite Markov chains}.
  Probability Theory and Related Fields 1~(1), 14--24.
\newline\urlprefix\url{http://www.springerlink.com/index/jl68j1141k5pm485.pdf}

\bibitem[{Lin(2011)}]{Lin2011}
Lin, L., 2011. {Roots of Stochastic Matrices and Fractional Matrix Powers}.
  Ph.D. thesis, University of Manchester.

\bibitem[{Lindblad(1976)}]{Lindblad1976}
Lindblad, G., 1976. {On the generators of quantum dynamical semigroups}.
  Communications in Mathematical Physics 48~(2), 119--130.
\newline\urlprefix\url{http://projecteuclid.org/euclid.cmp/1103899849}

\bibitem[{Ljung(1987)}]{Ljung1987}
Ljung, L., 1987. {System identification: theory for the user}. Prentice-Hall.
\newline\urlprefix\url{http://books.google.co.uk/books/about/System{\_}identification.html?id=gpVRAAAAMAAJ{\&}pgis=1}

\bibitem[{Loring(1878)}]{Loring1878}
Loring, A.~E., 1878. {A Hand-book of the Electromagnetic Telegraph}. D. Van
  Nostrand.
\newline\urlprefix\url{http://books.google.com/books?id=O-oOAAAAYAAJ{\&}pgis=1}

\bibitem[{Minc(1988)}]{Minc1988}
Minc, H., 1988. {Nonnegative Matrices}. Wiley.
\newline\urlprefix\url{http://books.google.com/books?id=tI6GQgAACAAJ{\&}pgis=1}

\bibitem[{M{\"{u}}ller(1987)}]{Muller1987}
M{\"{u}}ller, N.~T., jan 1987. {Uniform computational complexity of Taylor
  series}. In: Ottmann, T. (Ed.), Automata, Languages and Programming. 14th
  International Colloquium, Proceedings. Vol. 267 of Lecture Notes in Computer
  Science. Springer Berlin Heidelberg, Berlin, Heidelberg, pp. 435--444.
\newline\urlprefix\url{http://www.researchgate.net/publication/220896710{\_}Uniform{\_}Computational{\_}Complexity{\_}of{\_}Taylor{\_}Series}

\bibitem[{Muller-Hermes et~al.(2015)Muller-Hermes, Reeb, and
  Wolf}]{Muller-Hermes2015}
Muller-Hermes, A., Reeb, D., Wolf, M.~M., jan 2015. {Quantum Subdivision
  Capacities and Continuous-Time Quantum Coding}. IEEE Transactions on
  Information Theory 61~(1), 565--581.
\newline\urlprefix\url{http://arxiv.org/abs/1310.2856
  http://dx.doi.org/10.1109/TIT.2014.2366456
  http://ieeexplore.ieee.org/lpdocs/epic03/wrapper.htm?arnumber=6948359}

\bibitem[{Nielsen et~al.(1998)Nielsen, Knill, and Laflamme}]{Nielsen1998}
Nielsen, M.~A., Knill, E., Laflamme, R., nov 1998. {Complete quantum
  teleportation using nuclear magnetic resonance}. Nature 396~(6706), 15.
\newline\urlprefix\url{http://dx.doi.org/10.1038/23891
  http://arxiv.org/abs/quant-ph/9811020}

\bibitem[{Steutel and Kent(1979)}]{Steutel1979}
Steutel, F.~W., Kent, J., 1979. {Infinite divisibility in theory and practice}.
  Scandinavian Journal of {\ldots} 6~(2), 57--64.
\newline\urlprefix\url{http://www.jstor.org/stable/4615732}

\bibitem[{Thorin(1977{\natexlab{a}})}]{Thorin1977a}
Thorin, O., jan 1977{\natexlab{a}}. {On the infinite divisbility of the Pareto
  distribution}. Scandinavian Actuarial Journal 1977~(1), 31--40.
\newline\urlprefix\url{http://dx.doi.org/10.1080/03461238.1977.10405623}

\bibitem[{Thorin(1977{\natexlab{b}})}]{Thorin1977}
Thorin, O., mar 1977{\natexlab{b}}. {On the infinite divisibility of the
  lognormal distribution}. Scandinavian Actuarial Journal 1977~(3), 121--148.
\newline\urlprefix\url{http://dx.doi.org/10.1080/03461238.1977.10405635}

\bibitem[{Waugh and Abel(1967)}]{Waugh1967}
Waugh, F.~V., Abel, M.~E., sep 1967. {On Fractional Powers of a Matrix}.
  Journal of the American Statistical Association 62~(319), 1018--1021.
\newline\urlprefix\url{http://www.tandfonline.com/doi/abs/10.1080/01621459.1967.10500913}

\bibitem[{Whitney(1972)}]{Whitney1972}
Whitney, H., 1972. {Complex analytic varieties (Addison-Wesley Series in
  Mathematics)}. Addison-Wesley.
\newline\urlprefix\url{http://books.google.co.uk/books/about/Complex{\_}analytic{\_}varieties.html?id=lRGoAAAAIAAJ{\&}pgis=1
  http://scholar.google.com/scholar?hl=en{\&}btnG=Search{\&}q=intitle:Complex+analytic+varieties{\#}0}

\bibitem[{Wolf and Cirac(2008)}]{Wolf2008}
Wolf, M.~M., Cirac, J.~I., feb 2008. {Dividing Quantum Channels}.
  Communications in Mathematical Physics 279~(1), 147--168.
\newline\urlprefix\url{http://link.springer.com/10.1007/s00220-008-0411-y}

\end{thebibliography}

\end{document}